\DeclareMathOperator*{\esssup}{ess\,sup}
\DeclareMathOperator*{\essinf}{ess\,inf}
\def\R{{\mathbb R}}
\numberwithin{equation}{section}
\newtheorem{definition}{Definition}[section]
\newtheorem{theorem}{Theorem}[section]
\newtheorem{lemma}{Lemma}[section]
\newtheorem{prop}{Proposition}[section]
\def\dxt{\,{\rm d}x{\rm d}t}
\def\dx{\,{\rm d}x}
\def\dtau{\,{\rm d}\tau}
\numberwithin{equation}{section}
\begin{document}
\setlength{\abovedisplayskip}{3pt}
\setlength{\belowdisplayskip}{3pt}
\date{}
\title{On some Elliptic and Parabolic Problems Involving the Anisotropic $\vec{\textbf{p}}(u)$-Laplacian}
\author{ {\bf Kaushik Bal$\,^{1,}$\footnote{e-mail: {\tt kaushik@iitk.ac.in}},  Shilpa Gupta$\,^{1,}$\footnote{e-mail: {\tt shilpagupta890@gmail.com}}} \\ 
        $^1\,$Department of Mathematics and Statistics,\\ Indian Institute of Technology Kanpur,\\Uttar Pradesh, 208016, India}
\maketitle
\begin{abstract}
We investigate a class of elliptic and parabolic partial differential equations characterized by anisotropic \(\vec{\textbf{p}}(u)\)-Laplace operator, where the vector-valued exponent \(\vec{\textbf{p}} = (p_1, \ldots, p_N)\) depends on the unknown function \(u\) and a non-local function of $u$, respectively. This dependence necessitates the use of variable exponent Sobolev spaces specifically tailored to the anisotropic framework. For the elliptic case, we establish the existence of a weak solution by employing the theory of pseudomonotone operators in conjunction with suitable approximation techniques. In the parabolic setting, the existence of a weak solution is obtained via a time discretization scheme and Schauder’s fixed-point theorem, supported by a priori estimates and compactness arguments.

\noindent \textbf{Key words:} Anisotropic $\vec{\textbf{p}}(u)$-Laplacian; Schauder’s fixed point theorem; Anisotropic variable exponent Sobolev spaces; Monotone methods; Elliptic and parabolic equations

\medskip

\noindent \textbf{2020 Mathematics Subject Classification:} 	35D30, 35J60, 35K61.
\end{abstract}

%\tableofcontents
\section{Introduction}\setcounter{equation}{0}
\indent This paper is focused on establishing the existence of weak solutions for a class of elliptic and parabolic partial differential equations that involve anisotropic $p(u)$-Laplace operators. These operators are characterized by a vector-valued exponent that depends on the unknown function $u$ in the elliptic case and on a nonlocal function of $u$ in the parabolic case. We begin with the following elliptic problem:
\begin{equation}\label{1.1}
	-\Delta_{\vec{\textbf{p}}(u)}u  =f(x,u)\quad \text{in } \Omega;\ \quad u = 0 \quad \text{on}\; \partial\Omega,
	\end{equation}
where $\Omega\subset\R^{N}(N\geq 2)$ is a bounded domain with Lipschitz boundary $\partial\Omega$,
$$-\Delta_{\vec{\textbf{p}}(u)} u:=\sum_{i=1}^{N}\frac{\partial}{\partial x_i}\left( \left|\frac{\partial u}{\partial  x_i} \right|^{p_i(u)-2}\frac{\partial u}{\partial  x_i} \right), $$
where the exponent vector $\vec{\textbf{p}}:=(p_1,p_2,\ldots,p_N)$ with $p_i:\R\rightarrow [2,\infty)$ being continuous for each $i=1,2,\ldots,N$. The nonlinear function $f:\Omega\times\mathbb{R}\to\mathbb{R}$ is assumed to satisfy a set of suitable conditions, which will be specified later.

We further consider the corresponding nonlocal parabolic problem:
\begin{equation}\label{p1.1}
\begin{cases}
u_t - \Delta_{\vec{\mathbf{p}}(b(u))}u = f, & \text{in } \Omega \times (0, T), \\
u = 0, & \text{on } \Gamma := \partial\Omega \times (0, T), \\[4pt]
u(x,0) = u_0(x), & \text{in } \Omega.
\end{cases}
\end{equation}

where $\Omega\subset\R^{N}(N\geq 2)$ is a bounded domain with Lipschitz boundary $\partial\Omega$,  $f\in W^{-1,(p^{-})'}(\Omega)$, the initial datum $u_0$ belongs to $L^2(\Omega)$, and $b: W^{1,p^-}{0}(\Omega)\to \mathbb{R}$ is a continuous and bounded mapping, where the notation $p^-$ will be clarified later. The prototypical example of such nonlocal mappings include $$b(u)=\|\nabla u\|_{L^{p^{-}}(\Omega)}  \ \ \text{ and } \ \ b(u)=\|u\|_{L^{q}(\Omega)} \text{ for } q\leq (p^{-})^{*}=\frac{N p^{-}}{N-p^{-}}.$$

Due to the presence of an unknown function in the exponent, the main difficulty is that the problems \eqref{1.1} and \eqref{p1.1} can not be written as an equality in terms of duality pairing in a fixed Banach space. In fact, two distinct solutions may naturally belong to two different Sobolev spaces, depending on the corresponding values of the exponent. To the best of our knowledge, the first systematic study of a $p(u)$-Laplacian problem was carried out by Andreianov–Bendahmane–Ouaro \cite{andreianov2010structural}. They considered the elliptic boundary value problem
\begin{equation*}
\begin{cases}
u - \Delta_{p(u)}u = f, & \text{in } \Omega, \\
u = 0, & \text{on } \partial\Omega.
\end{cases}
\end{equation*}
under the suitable regularity assumptions on the domain $\Omega$. By exploiting techniques that effectively reduce the analysis to the setting of the Lebesgue space $L^1$ they established the existence of broad and narrow weak solutions. Subsequently, Chipot–Oliveira \cite{chipot2019some} proposed a different approach and studied both local and nonlocal formulations of the $p(u)$-Laplacian problem, namely
\begin{equation}\label{1_3}
\begin{cases}
-\Delta_{p(u)}u = f, & \text{in } \Omega, \\
u = 0, & \text{on } \partial\Omega.
\end{cases}
\end{equation}
and 
\begin{equation*}
\begin{cases}
-\Delta_{p(b(u))}u = f, & \text{in } \Omega, \\
u = 0, & \text{on } \partial\Omega.
\end{cases}
\end{equation*}
Their analysis was based on the Minty monotonicity trick combined with the powerful techniques introduced by Zhikov \cite{zhikov2009technique}, which are particularly well-suited for handling problems with nonstandard growth conditions. In the variable exponent context, these methods require additional care due to the non-homogeneity of the norm. In the same work, Chipot–Oliveira also formulated a collection of open problems, some of which have been addressed in later research. In particular, Zhang–Zhang \cite{zhang2021some} partially solved these questions by proving the existence of entropy solutions to the local elliptic problem \eqref{1_3}. Furthermore, they analyzed the parabolic extension
\begin{equation*}
\begin{cases}
u_t - \Delta_{p(b(u))}u = f, & \text{in } \Omega \times (0, T), \\
u = 0, & \text{on } \Gamma := \partial\Omega \times (0, T), \\
u(x,0) = u_0(x), & \text{in } \Omega.
\end{cases}
\end{equation*}

where the interplay between the nonlinear diffusion and the time evolution requires additional compactness and regularity tools. Further progress in the study of parabolic problems involving \(p[u(x,t)]\)-Laplacian operators was made by Antontsev-Shmarev \cite{antontsev2020class}, who analyzed such problems under the assumption that the co-domain of the exponent function lies within the interval \((1,2)\). This analysis was later extended by Antontsev-Kuznetsov-Shmarev~\cite{antontsev2021class}, where the dependence on \(u(x,t)\) was replaced by a dependence on the gradient \(\nabla u\), leading to the study of nonlocal parabolic problems governed by the \(p[\nabla u]\)-Laplacian. Despite these developments, parabolic problems involving the \(p(u)\)-Laplacian operator have received comparatively less attention in the literature. Notably, Aouaoui-Bahrouni \cite{aouaoui2022some}, as well as Aouaoui \cite{aouaoui2023existence}, established existence results for \(p(u)\)-Laplacian type equations posed in the whole space \(\mathbb{R}^N\). In the anisotropic setting, Giacomoni-Vallet \cite{giacomoni2012some} studied parabolic problems involving the \(p(x)\)-Laplacian operator.  More recently, Bahrouni-Bahrouni-Missaoui \cite{bahrouni2024new} investigated double-phase equations with exponents depending on the gradient of the solution, broadening the class of variable exponent problems. 

Motivated by the above work, we address the problems \eqref{1.1} and \eqref{p1.1} in this paper. Anisotropic $p(u)$-Laplacian problems capture complex behaviors arising in media with direction-dependent properties and modeling phenomena that cannot be addressed by isotropic equations alone. Our analysis combines the anisotropic, variable exponent framework with suitable approximation techniques, enabling us to extend the theory of $p(u)$-Laplacian type operators in new directions. 

We assume that non-linear function $f:\Omega\times\R\rightarrow \R$ is a Carath\'eodory function such that $f(\cdot,0)<0$ and fulfills the following condition:
\begin{itemize}
\item[$(f)$] $|f(x,t)|\leq c(1+|t|^{r-1}),$  $\forall \ (x,t)\in \Omega\times\R,$ for some $1\leq r< p^{-}$ and $c>0$.
\end{itemize}

We assume that $\vec{\textbf{p}}=(p_1,p_2,\ldots,p_N)$, $p_i: \mathbb{R} \rightarrow [2,\infty)$ for all $i=1,2,\ldots,N$ are continuous functions that fulfill the following conditions:
\begin{itemize}
\item[$ (p_{1}) $]  $N<p_i^-:=\essinf\limits_{x\in\R}p_i(t)\leq p_i(t)\leq\esssup\limits_{t\in\R}p_i(t):=p_i^+<\infty,\forall \  t\in \R$ and $i=1,2,\ldots,N$.
%\item[$ (p'_{1}) $]  $N<p_i^-:=\essinf\limits_{x\in\R}p_i(t)\leq p_i(t)\leq\esssup\limits_{t\in\R}p_i(t):=p_i^+<\infty,\forall \  t\in \R$ and $i=1,2,\ldots,N$. 
\item[$ (p_{2}) $] For each $i\in\{1,2,\ldots,N\}$ the exponent function $p_i$  is Lipschitz continuous, i.e.,  there exist $c_i>0$ such that
$$|p_i(t_1)-p_i(t_2)|\leq c_i |t_1-t_2| , \text{ for } t_1,t_2\in \R.$$
%\item[$ (p_{3}) $] For all $t\in \R$, we have $\overline{p}(t)> N$, where 
%$\overline{p}(t)=\dfrac{N}{\sum\limits_{i=1}^{N}\frac{1}{p_i(t)}}$.
\end{itemize}

We now state the main results of this article.
\begin{theorem}\label{t1}
Assume that conditions $(f)$ and $ (p_{1})$-$(p_{2})$ hold. Then problem \eqref{1.1} admits a non-trivial weak solution.
\end{theorem}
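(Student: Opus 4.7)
The plan is to establish existence by a Schauder fixed-point argument applied to a frozen-exponent map. The crucial structural input is the assumption $p_i^->N$, which via Morrey's theorem gives the compact embedding $W^{1,\vec{\textbf{p}}^-}_0(\Omega)\hookrightarrow\hookrightarrow C(\bar{\Omega})$. In particular, every candidate solution is continuous, and for $u\in W^{1,\vec{\textbf{p}}^-}_0(\Omega)$ the composition $p_i\circ u$ is well-defined, continuous, and takes values in $[p_i^-,p_i^+]$. This decouples the exponent from the unknown during the analysis and lets us work within a fixed ambient space.

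For fixed $v\in C(\bar{\Omega})$, set $p_i^v(x):=p_i(v(x))$ and consider the frozen problem
\begin{equation*}
-\sum_{i=1}^{N}\frac{\partial}{\partial x_i}\!\left(\left|\frac{\partial u}{\partial x_i}\right|^{p_i^v(x)-2}\frac{\partial u}{\partial x_i}\right)=f(x,v(x))\text{ in }\Omega,\qquad u=0\text{ on }\partial\Omega.
\end{equation*}
On the anisotropic variable exponent space $W^{1,\vec{\textbf{p}}^v(\cdot)}_0(\Omega)$ the associated operator is strictly monotone, coercive (since $p_i^v\ge 2$), hemicontinuous, and therefore pseudomonotone, while the right-hand side lies in $L^\infty(\Omega)$ by $(f)$. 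The Minty--Browder theorem yields a unique weak solution $u=:T(v)$. Testing with $u$ and combining $(f)$ with the anisotropic Sobolev embedding and the pointwise modular inequality $\int_\Omega|\partial_i u|^{p_i^-}\dx\le\int_\Omega|\partial_i u|^{p_i^v(x)}\dx+|\Omega|$ yields
\begin{equation*}
\|T(v)\|_{W^{1,\vec{\textbf{p}}^-}_0}\le C\bigl(1+\|v\|_\infty^{(r-1)/(p^--1)}\bigr).
\end{equation*}
Because $r<p^-$ the exponent on the right is strictly less than one, so for a suitable radius $R>0$ the closed ball $\bar B_R\subset C(\bar{\Omega})$ is invariant under $T$.

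It then remains to verify that $T\colon\bar B_R\to\bar B_R$ is continuous and compact. Compactness of $T(\bar B_R)$ follows from the uniform $W^{1,\vec{\textbf{p}}^-}_0$-bound together with the compact Morrey embedding. Continuity is the main obstacle: if $v_n\to v$ in $C(\bar{\Omega})$, the Lipschitz condition $(p_2)$ gives $p_i^{v_n}\to p_i^v$ uniformly, and the Carath\'eodory structure of $f$ together with $(f)$ gives $f(\cdot,v_n)\to f(\cdot,v)$ in $L^\infty$, but one must still identify any weak limit $w$ of $u_{v_n}:=T(v_n)$ with $T(v)$. Here the base $\partial_i u_{v_n}$ and the exponent $p_i^{v_n}(x)$ move simultaneously, so standard variable-exponent arguments do not apply directly. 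I would handle this via the Minty monotonicity trick in the spirit of Zhikov~\cite{zhikov2009technique} and Chipot--Oliveira~\cite{chipot2019some}, adapted to the anisotropic setting and exploiting the uniform convergence of the exponents to replace $|\partial_i w|^{p_i^{v_n}(x)-2}$ by $|\partial_i w|^{p_i^v(x)-2}$ in the limit. Once continuity is established, Schauder's fixed-point theorem yields $u=T(u)$, which is the desired weak solution of \eqref{1.1}. Non-triviality is immediate: if $u\equiv 0$ were a solution, then testing against any nonnegative $\phi\in C^\infty_c(\Omega)$ would give $0=\int_\Omega f(x,0)\phi\,\dx$, contradicting $f(\cdot,0)<0$.
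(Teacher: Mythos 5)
Your proposal takes a genuinely different route from the paper. The paper regularises problem \eqref{1.1} with $-\epsilon\Delta_{p^+}u$, which forces every approximate solution into the \emph{fixed} reflexive space $W^{1,p^+}_0(\Omega)$; there it proves existence via pseudomonotone-operator theory (Theorem \ref{the1}), using the $(S_+)$-property generated by the $\epsilon$-term, and then lets $\epsilon\to 0$ via Minty's trick and Lemma \ref{l1}. You instead freeze the exponent, solve the frozen problem by Minty--Browder, and set up a Schauder fixed-point argument. The trade-off is clear: the paper pays with an extra limiting passage but works throughout in one fixed Banach space; you avoid the perturbation entirely and exploit $p_i^->N$ more aggressively through the compact Morrey embedding, at the cost of having the frozen problems live in $v$-dependent variable-exponent spaces. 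Your a priori estimate and invariance of the ball (using that $(r-1)/(p^--1)<1$) are correct, and the non-triviality argument is fine.

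The step you leave as a sketch --- continuity of $T$ --- is exactly where the real work lies, and as written it has a concrete gap. You take $\bar B_R\subset C(\overline\Omega)$, but for a generic $v\in C(\overline\Omega)$ the exponents $p_i^v=p_i\circ v$ are only continuous, not log-H\"older. In that regime one cannot rule out $W^{1,\vec{\textbf{p}}^v(\cdot)}_0(\Omega)\subsetneq\mathring W^{1,\vec{\textbf{p}}^v(\cdot)}(\Omega)$ (the Lavrentiev gap), so when you extract $u_{v_n}\rightharpoonup w$ in $W^{1,p^-}_0(\Omega)$ and run Minty's trick against $\varphi\in C_c^\infty(\Omega)$, you cannot test the resulting variational inequality with $\varphi=w\mp\delta z$ and hence cannot identify $w$ with the (narrow) solution $T(v)$: the limit $w$ may be a broad solution only. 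The fix is to run Schauder on a ball in $C^{0,\alpha}(\overline\Omega)$ for some $0<\alpha<1-N/p^-$: then $p_i^v$ is H\"older, Theorem \ref{denseness} applies, $W^{1,\vec{\textbf{p}}^v(\cdot)}_0(\Omega)=\mathring W^{1,\vec{\textbf{p}}^v(\cdot)}(\Omega)$, and the Minty identification goes through. The compact embedding $W^{1,p^-}_0(\Omega)\hookrightarrow\hookrightarrow C^{0,\alpha}(\overline\Omega)$ and your a priori bound still give invariance and precompactness, so no other part of the argument changes. You should also make explicit that $w$ lies in $\mathring W^{1,\vec{\textbf{p}}^v(\cdot)}(\Omega)$ by combining $w\in W^{1,p^-}_0(\Omega)$ with the Fatou-type modular bound $\sum_i\int_\Omega|\partial_i w|^{p_i^v}\,dx\le\liminf_n\sum_i\int_\Omega|\partial_i u_{v_n}|^{p_i^{v_n}}\,dx$ (as in Lemma \ref{l1}), since this is what licenses the perturbation-of-$\varphi$ step.
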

\begin{theorem}\label{t2}
Suppose that condition $(p_{1})$ is satisfied, $f \in W^{-1,(p^{-})'}(\Omega)$ and $u_0\in L^2(\Omega)$. Let $b: W^{1,p^-}_{0}(\Omega) \to \mathbb{R}$ be a continuous and bounded mapping. Then problem \eqref{p1.1} admits a non-trivial weak solution in the sense of  definition \ref{para_weak_sol_def}.
\end{theorem}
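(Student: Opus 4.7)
The plan is to follow Rothe's method combined with Schauder's fixed-point theorem at each discrete time step, and then pass to the continuous limit via uniform a priori estimates, Aubin--Lions compactness, and a Minty-type monotonicity argument to identify the nonlinear flux. The key structural simplification is that $b$ takes values in $\R$, so at each fixed time the exponent vector $\vec{\mathbf{p}}(b(u(t)))\in[2,\infty)^N$ is spatially constant; only its time dependence couples with the unknown through $b$.

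Fix $n\in\N$, set $\tau=T/n$ and $t_k=k\tau$, and choose appropriate discretizations $f^k\in W^{-1,(p^-)'}(\Omega)$. Starting from $u^0=u_0$, I would inductively solve the implicit Euler scheme
\[
\frac{u^k-u^{k-1}}{\tau}-\Delta_{\vec{\mathbf{p}}(b(u^k))}u^k=f^k \quad\text{in }\Omega,\qquad u^k=0\text{ on }\partial\Omega,
\]
via Schauder's theorem. For any frozen $w\in W^{1,p^-}_0(\Omega)$, the constant-exponent problem with exponent $\vec{\mathbf{p}}(b(w))$ is strictly monotone, coercive and hemicontinuous on the associated anisotropic Sobolev space, so Browder's theorem gives a unique solution $v=:\Psi(w)$. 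Continuity of $b$ and of each $p_i$ together with the compact embedding of the anisotropic Sobolev space into $L^{p^-}$ (indeed into $L^\infty$, since $p^->N$) make $\Psi:W^{1,p^-}_0(\Omega)\to W^{1,p^-}_0(\Omega)$ continuous and compact, and the boundedness of $b$ combined with an energy test with $v$ furnishes an invariant closed ball. Schauder's theorem then delivers the fixed point $u^k$.

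Testing the scheme with $u^k$, using $a(a-b)\ge\tfrac12(a^2-b^2)$, Young's inequality and $\int_\Omega|\partial_i u^k|^{p^-}\le\int_\Omega|\partial_i u^k|^{p_i(b(u^k))}+|\Omega|$, and summing in $k$, yields the uniform bound
\[
\max_k\|u^k\|_{L^2}^2+\sum_{k=1}^n\tau\,\|u^k\|_{W^{1,p^-}_0}^{p^-}+\sum_{k=1}^n\tau\sum_{i=1}^N\int_\Omega|\partial_i u^k|^{p_i(b(u^k))}\dx\le C.
\]
Forming the piecewise-constant interpolants $u_n(t)=u^k$ on $(t_{k-1},t_k]$ and the piecewise-affine $\tilde u_n$, these translate into uniform bounds in $L^\infty(0,T;L^2(\Omega))\cap L^{p^-}(0,T;W^{1,p^-}_0(\Omega))$, while $\partial_t\tilde u_n$ is bounded in $L^{(p^-)'}(0,T;W^{-1,(p^-)'}(\Omega))$ from the equation. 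Aubin--Lions then delivers, along a subsequence, $\tilde u_n\to u$ strongly in $L^{p^-}(0,T;L^{p^-}(\Omega))$ and a.e., and by reflexivity $\partial_i u_n\rightharpoonup\partial_i u$ in $L^{p^-}(\Omega\times(0,T))$ and $|\partial_i u_n|^{p_i(b(u_n))-2}\partial_i u_n\rightharpoonup\chi_i$ in a suitable dual Lebesgue space.

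The main difficulty is identifying $\chi_i=|\partial_i u|^{p_i(b(u))-2}\partial_i u$, since the exponents themselves depend on $n$ through $b(u_n)$. I would resolve this by a variable-exponent Minty argument in the spirit of Chipot--Oliveira: from the discrete energy identity obtained by testing with $u^k$ and the integration-by-parts in time of $\tfrac12\partial_t\|u\|_{L^2}^2$, one passes to $\limsup$ to arrive at
\[
\limsup_{n\to\infty}\sum_{i=1}^N\int_0^T\!\!\int_\Omega|\partial_i u_n|^{p_i(b(u_n))-2}\partial_i u_n\,\partial_i u_n\dxt\le\sum_{i=1}^N\int_0^T\!\!\int_\Omega\chi_i\,\partial_i u\dxt.
\]
Strict monotonicity of $\xi\mapsto|\xi|^{s-2}\xi$ combined with this inequality forces $\partial_i u_n\to\partial_i u$ a.e.\ in $\Omega\times(0,T)$; then $u_n(\cdot,t)\to u(\cdot,t)$ in $W^{1,p^-}_0(\Omega)$ for a.e.\ $t$, continuity of $b$ gives $b(u_n(t))\to b(u(t))$ a.e., and Vitali's theorem with the uniform flux bound identifies $\chi_i$, permitting the passage to the limit in the weak formulation. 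The delicate point of this final step is that the Minty comparison also features the auxiliary difference $|\partial_i u|^{p_i(b(u_n))-2}\partial_i u-|\partial_i u|^{p_i(b(u))-2}\partial_i u$; I would control it using the bounded range of $b$ and continuity of $p_i$ through dominated convergence along a strongly convergent subsequence of $b(u_n)$, which is available from the compactness already established.
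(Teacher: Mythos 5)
Your overall architecture — Rothe time discretization, Schauder at each time level with a ``frozen exponent'' map, uniform estimates, and a Minty-type identification of the nonlinear flux as $h\to 0$ — matches the paper's proof closely, and your use of Aubin--Lions plus a.e.\ convergence of the exponents for the final passage is a legitimate (and arguably cleaner) way to organize what the paper handles more implicitly by citing Chipot--Oliveira.

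The genuine gap is in the Schauder step. You claim that $\Psi\colon W^{1,p^-}_0(\Omega)\to W^{1,p^-}_0(\Omega)$, $w\mapsto u_w$, is continuous \emph{and compact}, invoking the compact embedding $W^{1,p^-}_0(\Omega)\hookrightarrow\hookrightarrow L^\infty(\Omega)$. Continuity in the strong $W^{1,p^-}_0$-topology is fine (strong convergence $w_n\to w$ gives $b(w_n)\to b(w)$, then a monotonicity argument upgrades $v_n\rightharpoonup v$ to strong convergence). But compactness of $\Psi$ as a self-map of $W^{1,p^-}_0$ does not hold: the energy estimate only gives $\sum_i\int_\Omega|\partial_i u_w|^{p_i(b(w))}\le C$ with $p_i(b(w))\ge p^-$ and possibly $p_1(b(w))=p^-$, so $\Psi$ of a bounded set is merely \emph{bounded} in $W^{1,p^-}_0$, and bounded sets of $W^{1,p^-}_0$ are not precompact in $W^{1,p^-}_0$. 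The compact embedding into $L^\infty$ or $L^2$ only helps if you run Schauder in $L^\infty$ or $L^2$; but then continuity of $\Psi$ in that weaker topology demands that $b(w_n)\to b(w)$ whenever $w_n\to w$ in $L^2$ (equivalently, whenever $w_n\rightharpoonup w$ in $W^{1,p^-}_0$), which does not follow from the stated hypothesis that $b$ is strongly continuous on $W^{1,p^-}_0$. So the continuity and the compactness you invoke live in incompatible topologies, and as written the fixed-point step does not close.

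This is precisely the difficulty the paper addresses by adding the $\epsilon\,\Delta_{p^+}$ regularization at the discrete-time level and by running the fixed-point map $J\colon L^2(\Omega)\to L^2(\Omega)$: the uniform bound in $W^{1,p^+}_0(\Omega)$ plus Rellich gives compactness into $L^2$, while continuity is handled through a Minty-type limit (the paper itself leans on the a.e.\ convergence $p_i(b(w_n))\to p_i(b(w_0))$ inferred from $w_n\to w_0$ in $L^2$, which is the delicate point). To repair your proof you should either run Schauder in $L^2(\Omega)$ (or $C(\overline{\Omega})$, using $p^->N$) and address the continuity of $b$ in that topology explicitly, or reintroduce the $\epsilon$-perturbation as the paper does, then pass $\epsilon\to 0$ before $h\to 0$. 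The remainder of your argument — the discrete energy estimate, interpolants, Aubin--Lions, and the variable-exponent Minty step including your observation about the auxiliary difference $|\partial_i u|^{p_i(b(u_n))-2}\partial_i u-|\partial_i u|^{p_i(b(u))-2}\partial_i u$ — is in line with the paper and, with the fixed-point step repaired, would go through.
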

To address the challenges posed by the solution-dependent exponent in the operator, we employ an approximation technique inspired by  Chipot-Oliveira \cite{chipot2019some}. 

Specifically, to prove Theorem~\ref{t1}, we followed the following technique:
\begin{itemize}
\item We begin by formulating  a perturbed version of the original problem \eqref{1.1}, given by \eqref{1.2}, where a regularizing term involving the $p^+$-Laplacian is added, multiplied by a small parameter $\epsilon > 0$.
\item The inclusion of this perturbed term ensures that the operator is dominated by the higher order regularizing term, leveraging the fact that $\esssup\limits_{t\in\mathbb{R}} p_i(t) =: p_i^+$. This dominance allows us to obtain uniform a priori estimates, which are crucial in the existence analysis.
\item Using these estimates and the theory of pseudomonotone operators (Theorem~\ref{the1}), we prove the existence of a weak solution to the perturbed problem~\eqref{1.2} in Theorem~\ref{per_theorem}.
\item Finally, in Subsection~\ref{subsec3.2}, we pass to the limit as $\epsilon \to 0$ and rigorously justify the convergence of the approximating sequence, thereby obtaining the existence of a weak solution to the original problem \eqref{1.1}.
\end{itemize}

To prove Theorem~\ref{t2}, we proceed through the following sequence of well-structured steps:
\begin{itemize}
\item We begin by partitioning the time interval 
$(0,T)$ into $N_0$ subintervals of uniform length $h=T/N_0$. For each discrete time level, we consider the corresponding time-discrete problem~\eqref{elli_pro_k}, which takes the form of an elliptic equation.
\item Due to the dependence of the unknown function in the exponent of the operator, we introduce a perturbed version of the problem~\eqref{elli_pro_k}  given by \eqref{apprx_pro}, where a regularizing term involving the $p^+$-Laplacian is added, multiplied by a small parameter $\epsilon > 0$.  
\item As a preliminary step, we fix the exponent in problem \eqref{apprx_pro} and analyze the modified problem~\eqref{apprx_pro_0}. By applying the theory of monotone operators, we observe that there exists a unique solution to this simplified problem.
\item Employing Schauder’s fixed point theorem, we demonstrate the existence of a weak solution to the perturbed problem \eqref{apprx_pro}. 
\item With uniform estimates in hand, we pass to the limit as $\epsilon \to 0$ to recover a weak solution of the original time-discrete elliptic problem~\eqref{elli_pro_k}. 
\item Finally, to recover a weak solution to the original parabolic problem~\eqref{p1.1}, we let the time-step size $h\to 0$ and perform a careful convergence analysis. This yields the desired existence result for the full time-dependent problem.
\end{itemize}
\textbf{Notation:} Throughout the paper we adopt the following conventions:
\begin{enumerate}
\item[(i)] $\Omega_T:=\Omega\times(0,T)$.
\item[(ii)] $C$ denotes a generic positive constant, whose value may vary from line to line.
\item[(iii)] For $k \in (1, \infty)$, $k' := \frac{k}{k-1}$ is the conjugate exponent of $k$. 
\item[(iv)] $C_{+}(\overline{\Omega})=\{q\in C(\overline{\Omega},\R):\inf\limits_{x\in \Omega}q(x)>1\}.$
\item[(v)] $q^-:=\inf\limits_{x\in \Omega}q(x)$ and $q^+:=\sup\limits_{x\in \Omega}q(x)$.
\item[(vi)] Without loss of generality, we assume $p^-:=p_1^-\leq p_2^-\leq \ldots \leq p_N^-\leq p_1^+ \leq p_2^+ \ldots \leq p_N^+:=p^+$. 
\end{enumerate}

The paper is organized as follows: Section \ref{sec2} focusing on  the suitable Sobolev spaces, which are essential for handling the non-standard operator $\vec{\textbf{p}}(u)$-Laplacian. Section \ref{sec3} is devoted to the analysis of the nonlinear elliptic problem \eqref{1.1}, where we establish the existence of weak solutions using the theory of pseudomonotone operators together with perturbation method. Finally, in Section \ref{sec4}, we consider the associated parabolic problem \eqref{p1.1} and prove the existence of weak solutions by employing a combination of time discretization, approximation arguments, and Schauder’s fixed point theorem.

\section{Functional spaces and auxiliary results}\label{sec2}
Examining the elliptic equation~\eqref{1.1}, we observe that the exponent vector $\vec{\textbf{p}}$ depends on the solution~$u$, which itself is determined by the space variable $x$. Consequently, for a given function $u$, the exponent can ultimately be written as a function of \( x \) in the form of a variable exponent \( \vec{\textbf{q}}(x) \), where $\vec{\textbf{q}}(x) = \vec{\textbf{p}}(u(x)).$ Therefore, the natural space to study the equation \eqref{1.1} is the anisotropic variable exponent Sobolev space. 
 In contrast, for the parabolic equation~\eqref{p1.1}, the exponent~$\vec{\textbf{p}}$ depends on the function~$b$, which in turn is determined by~$u$. As a result, for a given~$u$, the exponent $\vec{\textbf{p}}(b(u))$ is a vector in~$\mathbb{R}^N$. Hence, the suitable space for analyzing the equation~\eqref{p1.1} is the anisotropic Sobolev space.

\subsection{Variable exponent Lebesgue spaces}
Let $\Omega\subset\R^{N}$ be a bounded domain with Lipschitz boundary $\partial\Omega$. For $q\in C_{+}(\overline{\Omega})$, variable exponent Lebesgue space \( L^{q(\cdot)}(\Omega) \) is defined by
\[
L^{q(\cdot)}(\Omega) := \left\{ u : \Omega \to \mathbb{R} \text{ measurable} \ \bigg| \ \int_\Omega |u(x)|^{q(x)} \dx< \infty \right\}
\]
which is a norm space with the luxemburg norm 
$$\|u\|_{L^{q(\cdot)}(\Omega)}=\inf\left\lbrace \tau>0: \ \int_{\Omega}\left| \frac{u(x)}{\tau} \right|^{q(x)}   \dx\leq 1  \right\rbrace \cdot$$ 
The space $L^{q(\cdot)}(\Omega)$ is Banach, reflexive and separable \cite{kovavcik}.

\begin{prop}\cite[Theorem 2.1]{kovavcik}
Let $r\in C_{+}(\overline{\Omega})$ and $s\in C_{+}(\overline{\Omega})$ be the conjugate exponents, i.e., $1/r(x)+1/s(x)=1$  $\forall x\in \overline{\Omega}.$ Then, for any $u\in L^{r(\cdot)}(\Omega)$ and $v\in L^{s(\cdot)}(\Omega)$, we have
$$\left| \int_{\Omega}uv \dx\right|\leq\left(\frac{1}{r^{-}}+\frac{1}{s^{-}}\right) \|u\|_{L^{r(\cdot)}(\Omega)}\|v\|_{L^{s(\cdot)}(\Omega)}.$$
\end{prop}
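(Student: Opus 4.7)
The plan is to reduce this generalized Hölder inequality to the classical pointwise Young inequality combined with the modular characterization of the Luxemburg norm. If either $\|u\|_{L^{r(\cdot)}(\Omega)}$ or $\|v\|_{L^{s(\cdot)}(\Omega)}$ is zero the claim is trivial, so I may assume both are strictly positive and work with the normalizations $\tilde u = u/\|u\|_{L^{r(\cdot)}(\Omega)}$ and $\tilde v = v/\|v\|_{L^{s(\cdot)}(\Omega)}$.

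The first step is to apply Young's inequality pointwise. Since $1/r(x)+1/s(x)=1$ for every $x\in\overline{\Omega}$ and $r(x),s(x)\in(1,\infty)$, one has
$$|\tilde u(x)\,\tilde v(x)| \leq \frac{|\tilde u(x)|^{r(x)}}{r(x)} + \frac{|\tilde v(x)|^{s(x)}}{s(x)} \quad \text{a.e.\ in } \Omega.$$
Integrating over $\Omega$ and using the uniform lower bounds $r(x)\geq r^-$ and $s(x)\geq s^-$, I would obtain
$$\int_\Omega |\tilde u\, \tilde v|\dx \leq \frac{1}{r^-}\int_\Omega|\tilde u|^{r(x)}\dx + \frac{1}{s^-}\int_\Omega|\tilde v|^{s(x)}\dx.$$
The key ingredient is then the modular-on-the-unit-sphere inequality $\int_\Omega|\tilde u|^{r(x)}\dx \leq 1$ (and analogously for $\tilde v$), which is a standard consequence of the very definition of the Luxemburg norm together with continuity of the modular $\rho_r(u)=\int_\Omega|u|^{r(x)}\dx$ under scaling — this in turn follows from the dominated convergence theorem, for which $r^+<\infty$ is essential. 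Substituting these two bounds yields $\int_\Omega|\tilde u\, \tilde v|\dx \leq 1/r^- + 1/s^-$, and undoing the normalization by multiplying through by $\|u\|_{L^{r(\cdot)}(\Omega)}\|v\|_{L^{s(\cdot)}(\Omega)}$ delivers the claim.

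The only genuinely subtle point in this argument is the modular bound $\rho_r(u/\|u\|_{L^{r(\cdot)}(\Omega)}) \leq 1$. Without the assumption $r^+<\infty$ (which here is guaranteed by $r\in C_+(\overline{\Omega})$ and the compactness of $\overline{\Omega}$) the infimum defining the Luxemburg norm need not be attained on the unit-modular set, and the integral on the right might exceed $1$. Once this property is in hand, the remainder of the proof is a direct transcription of the classical $L^p$ Hölder argument into the variable exponent setting, and I do not expect any further obstacles.
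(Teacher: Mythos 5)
Your argument is correct and is precisely the classical Kov\'a\v{c}ik--R\'akosn\'ik proof that the paper is citing without reproducing: pointwise Young's inequality with variable exponents, the bound $1/r(x)\le 1/r^-$, and the modular bound $\rho_r(u/\|u\|_{L^{r(\cdot)}})\le 1$ (which in this paper is also immediate from Proposition~\ref{rel}(3), since $\|\tilde u\|_{L^{r(\cdot)}}=1$ forces $\rho_r(\tilde u)=1$). One small technical remark: the modular bound at the Luxemburg norm is most cleanly obtained by monotone convergence applied to $\tau\mapsto\rho_r(u/\tau)$ as $\tau\searrow\|u\|_{L^{r(\cdot)}}$ (the integrands increase), rather than by dominated convergence; the hypothesis $r^+<\infty$ is of course satisfied here since $r\in C_+(\overline\Omega)$, but it is not what drives this particular limit.
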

\begin{prop}\cite{fan2001spaces}\label{rel}
Let $q\in C_{+}(\overline{\Omega})$. For any $u\in L^{q(\cdot)}(\Omega),$ the followings are true:
\begin{enumerate}
\item   $\|u\|_{L^{q(\cdot)}(\Omega)}^{q^{-}}\leq \rho(u)\leq\|u\|_{L^{q(\cdot)}(\Omega)}^{q^{+}}$ whenever ${\|u\|_{L^{q(\cdot)}(\Omega)}}> 1,$
\item   $\|u\|_{L^{q(\cdot)}(\Omega)}^{q^{+}}\leq \rho(u)\leq\|u\|_{L^{q(\cdot)}(\Omega)}^{q^{-}}$ whenever ${\|u\|_{L^{q(\cdot)}(\Omega)}}<1,$
\item  ${\|u\|_{L^{q(\cdot)}(\Omega)}}<1(=1;>1)$ iff  $\rho(u)<1(=1;>1)$,
\end{enumerate}
where $\rho(u)=\int_\Omega |u|^{q(x)}\dx.$
\end{prop}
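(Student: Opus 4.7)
The plan is to derive all three statements from the basic scaling identity for the modular together with the continuity and strict monotonicity of the map $\tau \mapsto \rho(u/\tau)$ on $(0,\infty)$ (for $u \not\equiv 0$).

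First, I would establish the following preliminary fact: for $u \in L^{q(\cdot)}(\Omega)$ with $u \not\equiv 0$, the function $\Phi(\tau) := \rho(u/\tau) = \int_{\Omega} |u(x)|^{q(x)}\tau^{-q(x)}\dx$ is continuous and strictly decreasing on $(0,\infty)$, with $\Phi(\tau)\to\infty$ as $\tau\to 0^+$ and $\Phi(\tau)\to 0$ as $\tau\to\infty$. Continuity follows from dominated convergence once one fixes a small neighborhood of any $\tau_0 > 0$ and uses $|u|^{q(x)}\tau^{-q(x)}$ as a dominating integrable function. Monotonicity is clear from $q(x) \geq q^- > 0$. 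This, combined with the definition of the Luxemburg norm as an infimum, shows that for $\lambda := \|u\|_{L^{q(\cdot)}(\Omega)} \in (0,\infty)$ one has $\Phi(\lambda) = 1$, i.e., the infimum is attained and equals $1$.

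Next, for (1) and (2), I would simply split the modular using the identity
\[
\rho(u) = \int_\Omega \lambda^{q(x)} \left|\frac{u(x)}{\lambda}\right|^{q(x)} \dx.
\]
When $\lambda > 1$, the bounds $\lambda^{q^-} \leq \lambda^{q(x)} \leq \lambda^{q^+}$ pointwise, combined with $\rho(u/\lambda) = 1$, immediately yield $\lambda^{q^-} \leq \rho(u) \leq \lambda^{q^+}$. When $\lambda < 1$, the same bounds reverse to $\lambda^{q^+} \leq \lambda^{q(x)} \leq \lambda^{q^-}$, giving the reversed inequality of statement~(2).

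Finally, for the three iff's in (3), the equality case $\|u\|=1 \Leftrightarrow \rho(u) = 1$ is immediate: if $\|u\|=1$ then $\rho(u) = \Phi(1) = 1$; conversely $\rho(u) = 1$ together with the strict monotonicity of $\Phi$ forces the infimum of $\{\tau : \Phi(\tau)\leq 1\}$ to be exactly $1$. For the strict inequalities, I would use the continuity of $\Phi$: if $\|u\| < 1$, pick $\tau^\ast \in (\|u\|,1)$, then $\Phi(\tau^\ast) \leq 1$ and a simple comparison (or a direct application of (2)) yields $\rho(u) < 1$; similarly for the $>1$ case, invoking (1) or a direct comparison argument. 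The converse directions use the same continuity argument applied to $\Phi$ near $\tau = 1$. The sole subtlety, and really the only place care is needed, is the passage from non-strict to strict inequalities in (3), which is why I introduce the continuity of $\Phi$ up front rather than relying purely on the infimum definition.
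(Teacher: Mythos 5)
Your proposal is correct and follows the standard argument: the paper cites this result from Fan--Zhao without giving a proof, and your route — showing that $\Phi(\tau)=\rho(u/\tau)$ is finite, continuous and strictly decreasing on $(0,\infty)$ so that $\rho(u/\|u\|_{L^{q(\cdot)}(\Omega)})=1$, then splitting $\rho(u)=\int_\Omega \lambda^{q(x)}|u(x)/\lambda|^{q(x)}\dx$ and bounding $\lambda^{q(x)}$ by $\lambda^{q^\pm}$ — is exactly the usual proof in the variable-exponent literature.
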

To know more about these spaces, one can check \cite{fan2001spaces,kovavcik,radulescu2015partial,cruz2013variable,barilla2021existence}.
\subsection{Anisotropic variable exponent Sobolev spaces} Let $\vec{\textbf{p}}=(p_1,p_2,\ldots,p_N)$, where $p_i\in C_{+}(\overline{\Omega})$ for all $i=1,2,\ldots,N$. For $x\in\Omega$, we define
$$p_{M}(x)=\max\{p_1(x),p_2(x),\ldots,p_N(x)\},$$
$$\overline{p}(x)=\dfrac{N}{\sum_{i=1}^{N}\frac{1}{p_i(x)}}$$
 and 
 \[
\overline{p}^*(x) =
\begin{cases}
\frac{N\,\overline{p}(x)}{N - \overline{p}(x)}, & \text{if } \overline{p}(x) < N, \\[6pt]
+\infty, & \text{if } \overline{p}(x) \ge N.
\end{cases}
\]

We introduce the anisotropic variable exponent Sobolev space $W^{1,\vec{\textbf{p}}(\cdot)}(\Omega)$ as 
\begin{align*}
W^{1,\vec{\textbf{p}}(\cdot)}(\Omega)&=\left\lbrace v\in L^{p_M(\cdot)}(\Omega) :  \frac{\partial v}{\partial  x_i}\in L^{p_i(\cdot)}(\Omega), \ \forall \ i=1,2,\ldots,N \right\rbrace \\
&=\left\lbrace v\in L^{1}_{loc}(\Omega) : v\in L^{p_i(\cdot)}(\Omega), \  \frac{\partial v}{\partial  x_i}\in L^{p_i(\cdot)}(\Omega), \ \forall \ i=1,2,\ldots,N \right\rbrace
\end{align*}
which is a norm space with the norm
$$\|v\|_{W^{1,\vec{\textbf{p}}(\cdot)}(\Omega)}=\|v\|_{L^{p_M(\cdot)}(\Omega)}+\sum_{i=1}^{N}\left\| \frac{\partial v}{\partial  x_i}\right\|_{ L^{p_i(\cdot)}(\Omega)} $$
where
$$\left\|  \frac{\partial v}{\partial  x_i}\right\| _{L^{p_i(\cdot)}(\Omega)}=\inf\left\lbrace \tau>0: \ \int_{\Omega}\left| \frac{\partial v}{\tau\partial  x_i} \right|^{p_i(x)}   \dx\leq 1  \right\rbrace \cdot$$ 
Next, consider the closure of $C_c^\infty(\Omega)$ with respect to the space  $W^{1,\vec{\textbf{p}}(\cdot)}(\Omega)$ and denote it as $W^{1,\vec{\textbf{p}}(\cdot)}_{0}(\Omega)$, i.e., $$W^{1,\vec{\textbf{p}}(\cdot)}_{0}(\Omega)=\overline{C_c^\infty(\Omega)}|^{\|\cdot\|_{W^{1,\vec{\textbf{p}}(\cdot)}(\Omega)}}.$$
Also, define the space
\[
\mathring{W}^{1,\vec{\textbf{p}}(\cdot)}(\Omega) 
= \left\{ u \in W^{1,\vec{\textbf{p}}(\cdot)}(\Omega) \; : \; u|_{\partial \Omega} = 0 \right\}
\]
Let $\Omega$ be a bounded domain with Lipschitz boundary $\partial\Omega$. Then, by definition,
\[
\mathring{W}^{1,\vec{\textbf{p}}(\cdot)}(\Omega) = W^{1,1}_0(\Omega) \cap W^{1,\vec{\textbf{p}}(\cdot)}(\Omega),
\]
and it is clear that
$W^{1,\vec{\textbf{p}}(\cdot)}_0(\Omega) \subset \mathring{W}^{1,\vec{\textbf{p}}(\cdot)}(\Omega).$

In the constant exponent setting, i.e., when $\vec{\textbf{p}} = (p_1, p_2, \dots, p_N) \in \mathbb{R}^N$, these spaces coincide:
\[
W^{1,\vec{\textbf{p}}}_0(\Omega) = \mathring{W}^{1,\vec{\textbf{p}}}(\Omega).
\]

However, for variable exponents, this equality generally fails, that is,
\[
W^{1,\vec{\textbf{p}}(\cdot)}_0(\Omega) \ne \mathring{W}^{1,\vec{\textbf{p}}(\cdot)}(\Omega),
\]
and the space of smooth compactly supported functions $C_c^\infty(\overline{\Omega})$ is not necessarily dense in $W^{1,\vec{\textbf{p}}(\cdot)}(\Omega)$.

Concerning the density of smooth functions in $W^{1,\vec{\textbf{p}}(\cdot)}_0(\Omega)$, the following result holds; to establish this density, some additional assumptions on the variable exponents are required.

\begin{theorem}\cite[Theorem 2.4]{fan2011anisotropic}\label{denseness}(Denseness)
Let $\Omega\subset \R^N$ be a bounded domain with a Lipschitz boundary and $\vec{\textbf{p}}=(p_1,p_2,\ldots,p_N)\in(C_{+}(\overline{\Omega}))^{N}$. Assume that, for each $i=1,2,\ldots,N$, $p_i$ is log H\"older continuous, i.e.,  
$$|p_i(x_1)-p_i(x_2)|\leq \dfrac{L}{\ln\left(\dfrac{1}{|x_1-x_2|}\right)}, \text{ for } x_1,x_2\in \overline{\Omega} \text{ whenever } |x_1-x_1|\leq \dfrac{1}{2}.$$ Then $C_c^\infty(\Omega)$ is dense in $\mathring{W}^{1,\vec{\textbf{p}}(\cdot)}(\Omega)$. Moreover, $W^{1,\vec{\textbf{p}}(\cdot)}_{0}(\Omega)= \mathring{W}^{1,\vec{\textbf{p}}(\cdot)}(\Omega)$.
\end{theorem}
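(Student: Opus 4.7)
The plan is to adapt the classical Meyers--Serrin strategy to the anisotropic variable exponent setting, with log--Hölder continuity playing the role that controls mollification. Given $u\in \mathring{W}^{1,\vec{\textbf{p}}(\cdot)}(\Omega)$, the goal is to construct $u_n\in C_c^\infty(\Omega)$ with $u_n\to u$ in $W^{1,\vec{\textbf{p}}(\cdot)}(\Omega)$. First I would extend $u$ by zero outside $\Omega$ to $\widetilde{u}$. Since $u \in W^{1,1}_0(\Omega)\cap W^{1,\vec{\textbf{p}}(\cdot)}(\Omega)$, one has $\widetilde{u}\in W^{1,1}(\R^N)$ with $\partial_i\widetilde{u}=\widetilde{\partial_i u}$; the $L^{p_i(\cdot)}$-integrability of each $\partial_i u$ then upgrades this to $\widetilde{u}\in W^{1,\vec{\textbf{p}}(\cdot)}(\R^N)$ (interpreting $p_i$ as extended by a log--Hölder extension to $\R^N$).

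Next I would use a partition of unity subordinate to a finite cover of $\overline{\Omega}$ by balls in which either $B_j\Subset\Omega$ or $\partial\Omega\cap B_j$ is the graph of a Lipschitz function $\varphi_j$. On interior balls, direct mollification $\rho_\epsilon*\widetilde{u}$ is admissible. On boundary balls, the Lipschitz graph structure provides a cone direction $\mathbf n_j$ into $\Omega$: translating $\widetilde{u}_\delta^{(j)}(x):=\widetilde{u}(x-\delta \mathbf n_j)$ moves the support of $(\psi_j u)_\delta^{(j)}$ into $\Omega$ and leaves a buffer of width $c\delta$ from $\partial\Omega$. Mollifying with $\epsilon\ll \delta$ then produces $C_c^\infty(\Omega)$ functions.

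The key convergence step relies on the log--Hölder hypothesis: by the Diening-type boundedness theorem for the Hardy--Littlewood maximal operator on $L^{p_i(\cdot)}(\R^N)$, convolution with a standard mollifier is uniformly bounded on each $L^{p_i(\cdot)}(\R^N)$, and $\rho_\epsilon*f\to f$ in $L^{p_i(\cdot)}(\R^N)$ for every $f\in L^{p_i(\cdot)}(\R^N)$. Combined with continuity of translations in $L^{p_i(\cdot)}$ (a consequence of density of continuous functions in each component space), this yields
\[
\rho_\epsilon*(\psi_j u)_\delta^{(j)} \longrightarrow \psi_j u \quad \text{in } L^{p_M(\cdot)}(\Omega),
\]
and analogously for the partial derivatives in $L^{p_i(\cdot)}(\Omega)$, first as $\epsilon\to 0$ for fixed $\delta$, then as $\delta\to 0$. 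A diagonal extraction combined over the finitely many patches gives the desired sequence. For the second assertion, $W^{1,\vec{\textbf{p}}(\cdot)}_{0}(\Omega)\subset \mathring{W}^{1,\vec{\textbf{p}}(\cdot)}(\Omega)$ is immediate from the definitions, and the approximation just constructed supplies the reverse inclusion.

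The main obstacle is the convergence of mollifiers in the variable exponent components: without log--Hölder continuity the operator $f\mapsto \rho_\epsilon*f$ need not even be bounded on $L^{p_i(\cdot)}$, and hence the key step $\rho_\epsilon*\partial_i u\to \partial_i u$ can fail. The anisotropic character adds only bookkeeping, since each direction $i$ is handled in its own $L^{p_i(\cdot)}$ norm, and the partition of unity plus graph-straightening via the Lipschitz boundary is standard; it is the componentwise variable exponent control that is the nontrivial input, and this is precisely what the log--Hölder hypothesis delivers.
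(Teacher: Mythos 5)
This theorem is \emph{cited}, not proved, in the paper: it appears with the tag \texttt{\cite[Theorem 2.4]\{fan2011anisotropic\}} and no argument is supplied, so there is no in-paper proof to compare your sketch against. Nothing in the paper's subsequent use of the result depends on its internal mechanics; the authors simply invoke it (through Theorem~\ref{denseness_1}) to pass the perturbed solutions to the limit.

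As to the sketch itself, the overall strategy (zero-extension via the $W^{1,1}_0$ structure, a Lipschitz-graph partition of unity, inward translation to create a buffer, then mollification, with log--H\"older continuity feeding Diening's maximal-operator bound) is indeed the standard route in the variable-exponent literature and is consistent in spirit with Fan's framework. However, one step as written is not correct and hides the genuine technical content. You invoke ``continuity of translations in $L^{p_i(\cdot)}$'' and justify it by density of continuous functions, via the usual three-term split $\|\tau_h f - f\|\le\|\tau_h(f-g)\|+\|\tau_h g - g\|+\|g-f\|$. That argument requires the translation operators $\tau_h$ to be uniformly bounded on $L^{p_i(\cdot)}$ for $|h|$ small, and this is precisely what fails in variable exponent spaces: $\tau_h$ maps $L^{p(\cdot)}$ to $L^{p(\cdot-h)}$, and even log--H\"older continuity does not give a uniform operator bound on $L^{p(\cdot)}$ itself. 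The literature avoids this by working at the level of the modular rather than the norm, using the quantitative log--H\"older estimate $|p(x)-p(x-h)|\le L/\ln(1/|h|)$ to compare $|a|^{p(x)}$ and $|a|^{p(x-h)}$ after truncating $|a|$ against a power of $1/|h|$, in combination with the mollification, rather than treating translation and mollification as two independent continuous operations. Without that modular estimate the step $\rho_\epsilon * (\psi_j u)_\delta^{(j)}\to\psi_j u$ is not justified. You also silently invoke a log--H\"older extension of each $p_i$ from $\overline{\Omega}$ to $\R^N$; this is standard but should be cited, since the maximal-function machinery is formulated on $\R^N$ and needs the extension to remain log--H\"older (including at infinity).
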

\begin{theorem}\label{denseness_0}\cite[Theorem 2.5]{fan2011anisotropic}(Regularity)
Let $\Omega\subset \R^N$ be a bounded domain with a Lipschitz boundary and $\vec{\textbf{p}}=(p_1,p_2,\ldots,p_N)\in(C_{+}(\overline{\Omega}))^{N}$. Assume that, $\overline{p}(x)> N$ for all $x\in\Omega$.  Then there exists $\alpha\in(0,1)$ such that $W^{1,\vec{\textbf{p}}(\cdot)}_{0}(\Omega)$ is continuously embedded in $C
^{0,\alpha}(\overline{\Omega})$.
\end{theorem}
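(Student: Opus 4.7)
The strategy is to prove an anisotropic Morrey-type inequality on smooth functions and then extend it by density, exploiting the fact that $W^{1,\vec{\textbf{p}}(\cdot)}_0(\Omega)$ is by definition the closure of $C_c^\infty(\Omega)$ in $\|\cdot\|_{W^{1,\vec{\textbf{p}}(\cdot)}}$.

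First, since $\overline{p}$ is continuous on the compact set $\overline{\Omega}$ and $\overline{p}(x)>N$ pointwise, there exists $\eta>0$ with $\overline{p}(x)\geq N+\eta$ for every $x\in\overline{\Omega}$. I set $\alpha:=\eta/(N+\eta)\in(0,1)$; this will serve as the Hölder exponent in the final embedding.

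The core analytic step is to establish that, for every $\varphi\in C_c^\infty(\Omega)$ and every $x,y\in\overline{\Omega}$,
\[
|\varphi(x)-\varphi(y)|\leq C\,|x-y|^{\alpha}\sum_{i=1}^{N}\left\|\frac{\partial\varphi}{\partial x_i}\right\|_{L^{p_i(\cdot)}(\Omega)},
\]
together with an analogous $L^\infty$-bound obtained by letting $y$ approach a point where $\varphi$ vanishes. I would follow the classical anisotropic Morrey scheme: connect $x$ and $y$ by a polygonal path moving parallel to one coordinate axis at a time, so that $\varphi(x)-\varphi(y)=\sum_{i=1}^{N}\int_{y_i}^{x_i}\partial_i\varphi(z^{(i)}(t))\,\mathrm{d}t$, and then average each one-dimensional integral over a box of side $\sim|x-y|$ in the complementary directions. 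Applying the variable-exponent Hölder inequality on each segment (and on the averaging box) yields, on the $i$-th segment, a factor of order $|x-y|^{1-1/p_i(\xi_i)}$ times $\|\partial_i\varphi\|_{L^{p_i(\cdot)}(\Omega)}$, where $\xi_i$ is a point produced by the averaging. Combining the $N$ contributions via the harmonic-mean identity that defines $\overline{p}$, and using Proposition~\ref{rel} to translate modulars into Luxemburg norms, one extracts an overall exponent of $|x-y|$ that is bounded below by $1-N/\overline{p}^-\geq 1-N/(N+\eta)=\alpha$, as required.

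Once this pointwise estimate is in hand, the theorem follows quickly: given $u\in W^{1,\vec{\textbf{p}}(\cdot)}_0(\Omega)$, choose $\varphi_k\in C_c^\infty(\Omega)$ with $\varphi_k\to u$ in $\|\cdot\|_{W^{1,\vec{\textbf{p}}(\cdot)}}$. The Morrey estimate shows that $\{\varphi_k\}$ is Cauchy in the Banach space $C^{0,\alpha}(\overline{\Omega})$, hence converges uniformly to a Hölder-continuous representative of $u$, and the inequality $\|u\|_{C^{0,\alpha}(\overline{\Omega})}\leq C\|u\|_{W^{1,\vec{\textbf{p}}(\cdot)}_0(\Omega)}$ passes to the limit. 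The main obstacle is the Morrey estimate itself. One cannot simply reduce to the constant-exponent case by taking $q_i:=p_i^-$: if the $p_i$'s attain their minima at different points of $\Omega$, then $\overline{p^-}:=N/\sum_i 1/p_i^-$ can be $\leq N$ even though $\overline{p}(x)>N$ holds pointwise. The argument therefore must be carried out genuinely with variable exponents, relying on Proposition~\ref{rel} and on a careful localization that preserves the harmonic-mean structure of $\overline{p}$ when combining the line-integral estimates.
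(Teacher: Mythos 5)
This statement is not proved in the paper at all: it is quoted verbatim as Theorem~2.5 of Fan \cite{fan2011anisotropic} (the citation in the theorem header and the ``(Regularity)'' tag make this explicit), so there is no in-paper proof to compare yours against. Note also that the paper never actually uses the result in the generality stated here: under assumption $(p_1)$ one has $p_i^->N$ for every $i$, and Theorem~\ref{denseness_1} then invokes only the isotropic embedding $W^{1,p^-}_0(\Omega)\hookrightarrow C^{0,1-N/p^-}(\overline{\Omega})$, which does not recover the anisotropic statement you are trying to prove.

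Your plan --- a Morrey-type oscillation estimate on $C_c^\infty$ plus density --- is the right skeleton, and the compactness argument giving $\overline{p}\geq N+\eta$ is fine, but the central estimate as you describe it has both an arithmetic slip and a real gap. Averaging the $i$-th line integral over a box of side $\sim|x-y|$ (a cube, of measure $\sim|x-y|^N$) and applying H\"older in $L^{p_i(\cdot)}$ produces a factor of order $|x-y|^{1-N/p_i}$, not $|x-y|^{1-1/p_i}$; the extra $N$ comes from the $N$-dimensional averaging set. With the corrected exponent, the cube construction only yields a positive H\"older power when $p_i(\cdot)>N$ for \emph{every} $i$, which the hypothesis $\overline{p}(x)>N$ (equivalently $\sum_i 1/p_i(x)<1$) does not imply: for $N=2$, $p_1=3/2$, $p_2=6$ one has $\overline{p}=12/5>2$ yet $1-N/p_1=-1/3<0$. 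You correctly flag that one cannot reduce to constant exponents by passing to $p_i^-$, but the cube-averaging you propose runs into the very same obstruction. The genuine anisotropic Morrey argument (Troisi, Rakosnik, and the proof in Fan's paper) averages over boxes whose side lengths scale \emph{differently} in the $N$ coordinate directions, precisely so that the $N$ contributions combine into an exponent governed by the harmonic mean $\overline{p}$ rather than by $\min_i p_i$; your phrase ``combining via the harmonic-mean identity'' gestures at this, but the isotropic cube you actually chose does not realize it, and the sketch as written would not close.
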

\begin{theorem}\cite[Theorem 2.6]{fan2011anisotropic}(Poincar\'e inequality)
Let $\Omega\subset \R^N$ be a bounded domain with a Lipschitz boundary and $\vec{\textbf{p}}=(p_1,p_2,\ldots,p_N)\in(C_{+}(\overline{\Omega}))^{N}$. Assume that, $p_M(x)\leq \overline{p}^*(x)$ for all $x\in\Omega$. Then, we have the following Poincar\'e inequality:
$$\|v\|_{L^{p_M(\cdot)}(\Omega)}\leq c\sum_{i=1}^{N}\left\| \frac{\partial v}{\partial  x_i}\right\|_{ L^{p_i(\cdot)}(\Omega)}, \forall \ v\in W^{1,\vec{\textbf{p}}(\cdot)}_{0}(\Omega)$$
for some $c>0$. Thus, $\sum_{i=1}^{N}\left\| \frac{\partial v}{\partial  x_i}\right\|_{ L^{p_i(\cdot)}(\Omega)}$ is an equivalent norm in $W^{1,\vec{\textbf{p}}(\cdot)}_{0}(\Omega)$.
\end{theorem}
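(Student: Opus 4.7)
The plan is to deduce the Poincaré-type inequality by chaining two embedding results: the anisotropic variable-exponent Sobolev embedding
$$W^{1,\vec{\textbf{p}}(\cdot)}_0(\Omega) \hookrightarrow L^{\overline{p}^*(\cdot)}(\Omega),$$
followed by the elementary inclusion $L^{\overline{p}^*(\cdot)}(\Omega) \hookrightarrow L^{p_M(\cdot)}(\Omega)$, which is valid on the bounded domain $\Omega$ precisely because of the standing hypothesis $p_M(x) \leq \overline{p}^*(x)$.

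By definition, $W^{1,\vec{\textbf{p}}(\cdot)}_0(\Omega)$ is the closure of $C_c^\infty(\Omega)$ in the norm of $W^{1,\vec{\textbf{p}}(\cdot)}(\Omega)$, so by a routine density/continuity argument it suffices to prove the estimate for $v\in C_c^\infty(\Omega)$. For such $v$ I would use a localization technique: cover $\overline{\Omega}$ by finitely many balls $\{B_k\}$ chosen so that on each $B_k$ the oscillation of every $p_i$ is smaller than a prescribed tolerance, pick a subordinate partition of unity $\{\eta_k\}$, and apply the classical constant-exponent anisotropic Sobolev--Troisi inequality
$$\|\eta_k v\|_{L^{\overline{q}_k^{*}}(B_k)} \leq C \prod_{i=1}^N \left\|\frac{\partial (\eta_k v)}{\partial x_i}\right\|_{L^{q_{k,i}}(B_k)}^{1/N},$$
where the frozen exponents $q_{k,i}$ are the infima of $p_i$ on $B_k$. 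The modular--norm comparison of Proposition \ref{rel}, together with the boundedness of $\Omega$, then allows one to transfer these constant-exponent pieces back to the variable-exponent norms and to sum them into the global bound
$$\|v\|_{L^{\overline{p}^*(\cdot)}(\Omega)} \leq C \sum_{i=1}^N \left\|\frac{\partial v}{\partial x_i}\right\|_{L^{p_i(\cdot)}(\Omega)}.$$
Composing with the embedding $L^{\overline{p}^*(\cdot)}(\Omega) \hookrightarrow L^{p_M(\cdot)}(\Omega)$ yields the stated Poincaré inequality. The equivalence of norms on $W^{1,\vec{\textbf{p}}(\cdot)}_0(\Omega)$ is then immediate from the trivial upper bound $\sum_i\|\partial_i v\|_{L^{p_i(\cdot)}(\Omega)} \leq \|v\|_{W^{1,\vec{\textbf{p}}(\cdot)}(\Omega)}$.

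The main obstacle is the localization step: freezing each $p_i$ to a constant on $B_k$ introduces multiplicative errors whose uniform control over the finite cover must be established using only the continuity of the exponents and compactness of $\overline{\Omega}$. Handling the transitions between the frozen constant exponents $q_{k,i}$ and the variable $p_i(\cdot)$ cleanly, and making sure that the constant in the Troisi inequality does not blow up as the oscillation tolerance shrinks, is the delicate technical point; once this is done, reassembling the local estimates via the partition of unity and the pointwise bound $p_M(x)\leq \overline{p}^*(x)$ gives the result.
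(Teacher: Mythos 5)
The paper does not prove this statement; it is imported verbatim as \cite[Theorem 2.6]{fan2011anisotropic}, so there is no internal proof to compare your attempt against. Evaluating your sketch on its own terms, it is a reasonable outline of the standard variable-exponent localization strategy, but it has some genuine gaps you should be aware of.

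First, your opening move — invoke the critical anisotropic variable-exponent embedding $W^{1,\vec{\textbf{p}}(\cdot)}_0(\Omega)\hookrightarrow L^{\overline{p}^*(\cdot)}(\Omega)$ and then drop down to $L^{p_M(\cdot)}(\Omega)$ — is circular in spirit: that critical embedding is itself at least as hard as (and in fact implies) the stated Poincar\'e inequality, so you cannot simply cite it and then also claim to give a proof. What you actually go on to do is \emph{prove} the embedding via localization, which is the right instinct, but then the first paragraph should be discarded. Second, the Troisi inequality you freeze to, $\|\eta_k v\|_{L^{\overline{q}_k^*}}\le C\prod_i\|\partial_i(\eta_k v)\|_{L^{q_{k,i}}}^{1/N}$, is only valid when $\overline{q}_k<N$; if $\overline{p}(x)\ge N$ somewhere (and note that under the paper's standing hypothesis $(p_1)$ one in fact has $\overline{p}(x)>N$ everywhere, so $\overline{p}^*\equiv+\infty$), you need a different argument on those patches — typically a Morrey-type embedding into $L^\infty$ or $C^{0,\alpha}$, as in the paper's Theorem \ref{denseness_0}. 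Third, the transition from frozen constant exponents $q_{k,i}=\inf_{B_k}p_i$ back to $L^{p_i(\cdot)}$ and from $L^{\overline{q}_k^*}$ to $L^{p_M(\cdot)}$ is not merely a matter of modular-norm comparison: on each patch one needs $\sup_{B_k}p_M\le\overline{q}_k^*$, and in the borderline case $p_M(x)=\overline{p}^*(x)$ allowed by the hypothesis this strict separation is lost, so the oscillation-control step does not close without an additional assumption (log-H\"older continuity is what is used for exactly this purpose in the variable-exponent literature, and is indeed assumed in the companion density result, Theorem \ref{denseness}). You flagged the frozen-exponent control as ``the delicate technical point,'' but it is more than delicate — as written it is the place where the argument genuinely does not go through without extra hypotheses.
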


If the exponents are constant, i.e., $\vec{\textbf{p}}=(p_1,p_2,\ldots,p_N)\in\R^{N}$  then the space reduces to the anisotropic Sobolev space
$$W^{1,\vec{\textbf{p}}}(\Omega)=\left\lbrace v\in L^{p_M}(\Omega) :  \frac{\partial v}{\partial  x_i}\in L^{p_i}(\Omega), \ \forall \ i=1,2,\ldots,N \right\rbrace$$
which is a norm space with the norm
$$\|v\|_{W^{1,\vec{\textbf{p}}}(\Omega)}=\|v\|_{L^{p_M}(\Omega)}+\sum_{i=1}^{N}\left\| \frac{\partial v}{\partial  x_i}\right\|_{ L^{p_i}(\Omega)}$$
where $p_M=\max\{p_1,p_2,\ldots,p_N\}$. Next, consider the closure of $C_c^\infty(\Omega)$ with respect to the space  $W^{1,\vec{\textbf{p}}}(\Omega)$ and denote it as $W^{1,\vec{\textbf{p}}}_{0}(\Omega)$, i.e., $$W^{1,\vec{\textbf{p}}}_{0}(\Omega)=\overline{C_c^\infty(\Omega)}|^{\|\cdot\|_{W^{1,\vec{\textbf{p}}}(\Omega)}}$$ which is a norm space with the norm
$$\|v\|_{W_0^{1,\vec{\textbf{p}}}(\Omega)}=\sum_{i=1}^{N}\left\| \frac{\partial v}{\partial  x_i}\right\|_{L^{p_i}(\Omega)}.$$
For further details on anisotropic Sobolev spaces, we refer to\cite{chrif2024renormalized,razani2024positive,bendahmane2011approximation}.
\subsection{Auxiliary results}
 To prove our main result, we will use the following inequalities:
\begin{lemma}\cite[Lemma 2.1]{misawa2023finite}\label{inequalities}
For every $p\in(1,\infty)$ there exists $C_1,C_2,C_3>0$ such that for all $x,y\in \R^{N},$ where $ \langle\cdot,\cdot \rangle $ is the usual inner product in $\R^{N}$, the following inequalities holds:
$$
\langle|x|^{p-2}x-|y|^{p-2}y,x-y\rangle\geq C_{1}(|x|+|y|)^{p-2}|x-y|^{2},
$$
and
$$
||x|^{p-2}x-|y|^{p-2}y|\leq C_{2}(|x|+|y|)^{p-2}|x-y|.
$$
In particular, for $p\geq 2$
$$
\langle|x|^{p-2}x-|y|^{p-2}y,x-y\rangle\geq C_{3}|x-y|^{p}.
$$
\end{lemma}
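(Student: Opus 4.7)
The three inequalities all stem from the integral representation
\begin{equation*}
|x|^{p-2}x - |y|^{p-2}y = \int_0^1 \bigl[|\xi(t)|^{p-2}(x-y) + (p-2)|\xi(t)|^{p-4}\langle\xi(t),x-y\rangle\,\xi(t)\bigr]\,dt,
\end{equation*}
where $\xi(t) := y + t(x-y)$ traces the straight segment from $y$ to $x$, with the integrand interpreted as $0$ on the measure-zero set where $\xi(t)=0$. This identity follows from the chain rule applied to the map $\xi \mapsto |\xi|^{p-2}\xi$, which is smooth away from the origin.

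For the coercivity estimate (first inequality), take the inner product with $x-y$; the integrand becomes
\[
|\xi(t)|^{p-2}|x-y|^2 + (p-2)|\xi(t)|^{p-4}\langle\xi(t),x-y\rangle^2.
\]
If $p \geq 2$ both terms are non-negative, while if $1 < p < 2$ the Cauchy--Schwarz bound $\langle\xi(t),x-y\rangle^2 \leq |\xi(t)|^2|x-y|^2$ yields the lower bound $(p-1)|\xi(t)|^{p-2}|x-y|^2$. In either case, the claim reduces to the scalar inequality $\int_0^1 |\xi(t)|^{p-2}\,dt \geq c_p(|x|+|y|)^{p-2}$. For the Lipschitz-type estimate (second inequality), the triangle inequality plus Cauchy--Schwarz give the pointwise bound $|p-1|\,|\xi(t)|^{p-2}|x-y|$ on the integrand, reducing the claim to the matching upper bound $\int_0^1 |\xi(t)|^{p-2}\,dt \leq C_p(|x|+|y|)^{p-2}$. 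The third inequality is then immediate from the first when $p \geq 2$: since $|x-y| \leq |x|+|y|$ and $p-2 \geq 0$, one has $(|x|+|y|)^{p-2} \geq |x-y|^{p-2}$, so the right-hand side of (1) controls $C_3|x-y|^p$.

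The main obstacle is therefore the two-sided comparison $\int_0^1 |\xi(t)|^{p-2}\,dt \asymp (|x|+|y|)^{p-2}$, which is delicate when $1 < p < 2$ because the integrand may blow up if the segment $\xi(t)$ passes near the origin. I would handle this by a case split: if $|x|+|y| \geq 2|x-y|$, then $|\xi(t)| \gtrsim |x|+|y|$ uniformly in $t \in [0,1]$ and both bounds are elementary; in the complementary regime $|x|+|y| < 2|x-y|$ the three quantities $|x|$, $|y|$, $|x-y|$ are all comparable, and a scaling/change-of-variables argument reduces the integral to a one-dimensional estimate $\int_0^1 |\alpha + t\beta|^{p-2}\,dt$ for scalars $\alpha,\beta \in \mathbb{R}$, which can be evaluated explicitly to give the claimed comparison.
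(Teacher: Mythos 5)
The paper does not contain a proof of this lemma; it is quoted verbatim from the cited source \cite{misawa2023finite}, so there is no in-paper argument to compare against. Your proposal is a correct and standard route, via the fundamental-theorem-of-calculus representation of $|x|^{p-2}x - |y|^{p-2}y$ along the segment and a reduction to the two-sided estimate $\int_0^1 |\xi(t)|^{p-2}\,dt \asymp (|x|+|y|)^{p-2}$, which does hold uniformly (the difficult direction of each bound follows from your case split plus a one-variable computation after projecting $\xi(t)$ onto the direction of $x-y$). Two small slips, neither affecting validity: the pointwise bound on the integrand in the Lipschitz estimate is $\bigl(1+|p-2|\bigr)\,|\xi(t)|^{p-2}|x-y|$, which equals $3-p$ (not $|p-1|$) when $1<p<2$; and in the regime $|x|+|y|<2|x-y|$ it is not true that $|x|$, $|y|$, $|x-y|$ are all pairwise comparable (e.g.\ $y=0$), only that $|x-y|\asymp |x|+|y|$, which is in fact all that the subsequent one-dimensional estimate uses. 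Finally, since the paper invokes this lemma only with $p=p_i(\cdot)\geq 2$, one could bypass the delicate near-origin analysis entirely: for $p\geq 2$ the bound $|\xi(t)|^{p-2}\leq(|x|+|y|)^{p-2}$ is immediate, and the lower bound needs only that $|\xi(t)|\gtrsim|x|+|y|$ on a set of $t$ of measure $\geq 1/2$, which follows from $|\xi(1)-\xi(0)|=|x-y|$ and $|\xi(0)|+|\xi(1)|=|x|+|y|$.
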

\begin{definition}
Let $X$ be a reflexive Banach space, and let $\langle \cdot, \cdot \rangle_{X}$ denote the duality pairing between $X$ and its dual space $X^{*}$. Let $J:X \to X^{*}$ be an operator. We say:

\begin{enumerate}
    \item $J$ is \emph{bounded} if it maps bounded sets in $X$ into bounded sets in $X^{*}$.
    
    \item $J$ is \emph{coercive} if
    \[
    \lim_{\|u\|\to \infty} \frac{\langle J(u), u \rangle_{X}}{\|u\|} = \infty.
    \]
    
    \item $J$ is \emph{pseudomonotone} if whenever $\{u_n\} \subset X$ converges weakly to $u$ in $X$ and
    \[
    \limsup_{n \to \infty} \langle J(u_n), u_n - u \rangle_{X} \le 0,
    \]
    then $J(u_n) \rightharpoonup J(u)$ in $X^{*}$ and
    \[
    \langle J(u_n), u_n \rangle_{X} \to \langle J(u), u \rangle_{X}.
    \]
    
    \item $J$ satisfies the \emph{$(S_{+})$-property} if for any sequence $\{u_n\} \subset X$ such that $u_n \rightharpoonup u$ in $X$ and
    \[
    \limsup_{n \to \infty} \langle J(u_n), u_n - u \rangle_{X} \le 0,
    \]
    we have $u_n \to u$ strongly in $X$.
\end{enumerate}
\end{definition}

\begin{theorem}\cite[Theorem 2.99]{carl}\label{the1}
Let $X$ be a reflexive Banach space, and let $\langle \cdot,\cdot \rangle_{X}$ denote the duality pairing between $X$ and its dual space $X^{*}$. If the operator $J:X\rightarrow X^{*}$ is bounded, coercive and pseudomonotone  then there exists a solution to the equation $J(u)=b$ for any $b\in X^{*}.$
\end{theorem}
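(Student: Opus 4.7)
The plan is to prove the surjectivity statement via the classical Galerkin scheme combined with the pseudomonotonicity hypothesis, following the Brezis surjectivity strategy. Since $X$ is reflexive and separable (we may reduce to the separable case by restricting to a closed separable subspace containing the relevant data, and the general case follows by a standard argument), I would fix an increasing sequence of finite-dimensional subspaces $X_1\subset X_2\subset \cdots \subset X$ with $\overline{\bigcup_n X_n}=X$, together with canonical injections $i_n:X_n\hookrightarrow X$ and adjoint projections $i_n^{*}:X^{*}\to X_n^{*}$.

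First I would solve the finite-dimensional problem $i_n^{*}J(i_n u_n)=i_n^{*}b$ on each $X_n$. Define $F_n:X_n\to X_n$ by identifying $X_n$ with $X_n^{*}$ via a fixed inner product and setting $F_n(v)=i_n^{*}(J(v)-b)$. Coercivity of $J$ gives $\langle F_n(v),v\rangle_{X}>0$ on a sphere $\{\|v\|=R_n\}$ for $R_n$ large enough, because
\[
\langle F_n(v),v\rangle_{X}=\langle J(v),v\rangle_{X}-\langle b,v\rangle_{X}\geq \|v\|\Bigl(\tfrac{\langle J(v),v\rangle_{X}}{\|v\|}-\|b\|_{X^{*}}\Bigr),
\]
which is positive when $\|v\|$ is large. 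A standard consequence of Brouwer's fixed point theorem (the "acute angle" lemma) then yields $u_n\in X_n$ with $F_n(u_n)=0$, that is,
\[
\langle J(u_n),v\rangle_{X}=\langle b,v\rangle_{X}\quad\text{for all }v\in X_n.
\]
Taking $v=u_n$ and using coercivity again produces a uniform bound $\|u_n\|\leq C$, so after passing to a subsequence $u_n\rightharpoonup u$ in $X$ by reflexivity, and $\{J(u_n)\}$ is bounded in $X^{*}$ by the boundedness hypothesis.

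Next I would verify the pseudomonotonicity trigger $\limsup_{n}\langle J(u_n),u_n-u\rangle_{X}\le 0$. For any $m$ and any $w\in X_m$, the Galerkin identity gives $\langle J(u_n),u_n-w\rangle_{X}=\langle b,u_n-w\rangle_{X}$ for $n\geq m$, and passing to the limit yields $\limsup_n\langle J(u_n),u_n-u\rangle_{X}=\langle b,u-w\rangle_{X}+\limsup_n\langle J(u_n),w-u\rangle_{X}$. Since the last term tends to $0$ (boundedness of $\{J(u_n)\}$ in $X^{*}$ and $w-u$ fixed does not quite work directly, so I would instead pick $w=w_m\in X_m$ with $w_m\to u$ in $X$ using density of $\bigcup_m X_m$, and combine this with a diagonal argument over $m$). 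This gives $\limsup_n\langle J(u_n),u_n-u\rangle_{X}\le 0$.

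Now the pseudomonotonicity of $J$ yields $J(u_n)\rightharpoonup J(u)$ in $X^{*}$ and $\langle J(u_n),u_n\rangle_{X}\to\langle J(u),u\rangle_{X}$. Passing to the limit in the Galerkin identity, for any fixed $w\in\bigcup_m X_m$ we obtain $\langle J(u),w\rangle_{X}=\langle b,w\rangle_{X}$, and by density this extends to all $w\in X$, giving $J(u)=b$. The main obstacle in this plan is the verification of the pseudomonotonicity trigger: one must carefully combine the Galerkin identities with an approximation $w_m\to u$ chosen from $\bigcup_m X_m$ and a diagonal extraction, because a naive test with $w=u$ is not admissible ($u\notin X_n$ in general). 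Everything else is routine once this step is in place.
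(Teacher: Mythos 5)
The paper does not prove this result; it is cited verbatim from Carl, Le, and Motreanu \cite{carl} (Theorem 2.99) and used as a black box, so there is no in-paper proof to compare against. Your reconstruction is essentially the standard Brezis--Browder Galerkin argument, which is the proof that appears in that reference and in Zeidler, and it is correct in outline.

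Two small points worth tightening. First, to apply Brouwer's theorem (the acute-angle lemma) you need the finite-dimensional maps $F_n$ to be continuous; this does not follow from the paper's definition of pseudomonotonicity alone, but from the standard lemma that a bounded pseudomonotone operator is demicontinuous, and demicontinuity on a finite-dimensional subspace is continuity. You should cite or prove that lemma before invoking Brouwer. Second, the step you flag as the ``main obstacle'' is actually cleaner than a diagonal extraction: fixing $w_m\in X_m$ with $w_m\to u$ strongly, for $n\ge m$ the Galerkin identity gives
\[
\langle J(u_n),u_n-u\rangle = \langle b,u_n-w_m\rangle + \langle J(u_n),w_m-u\rangle,
\]
so $\limsup_n\langle J(u_n),u_n-u\rangle \le \langle b,u-w_m\rangle + C\|w_m-u\|$, and the left side is independent of $m$; letting $m\to\infty$ gives the trigger directly, no diagonalization needed. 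Finally, the reduction to the separable case deserves a sentence more care (e.g.\ working with the net of all finite-dimensional subspaces ordered by inclusion handles the non-separable case uniformly), but this is a minor technicality.
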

\begin{lemma}\label{l1}
 Let $\{u_n\}\subset W_{0}^{1,1}(\Omega)$ and $u\in W_{0}^{1,1}(\Omega)$. Moreover, $1<p^-\leq p_{i,n}\leq p^+<\infty$ and $p_{i,n}\rightarrow p_i$ a.e. in $\Omega$, $$\sum_{i=1}^{N} \int_{\Omega}\left|\frac{\partial u_n}{\partial  x_i} \right|^{p_{i,n}(x)}\dx<\infty \quad \text{and} \quad \frac{\partial u_n}{\partial  x_i}\rightharpoonup\frac{\partial u}{\partial  x_i}$$ in $L^1(\Omega).$ Then $\nabla u\in (L^{p_i}(\Omega))^{N}$ and $$\int_{\Omega}\left|\frac{\partial u}{\partial  x_i} \right|^{p_{i}(x)}\dx\leq \liminf_{n\rightarrow\infty}\sum_{i=1}^{N}\int_{\Omega}\left|\frac{\partial u_n}{\partial  x_i} \right|^{p_{i,n}(x)}\dx.$$
\end{lemma}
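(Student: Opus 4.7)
The proof is a duality/Legendre-transform argument in the spirit of Zhikov. Fix $i\in\{1,\dots,N\}$. The starting point is the pointwise Young-type inequality
$$|\xi|^{p} \geq p\eta\xi - (p-1)|\eta|^{p'} \qquad \text{for all } \xi,\eta\in\R,\ p>1,$$
with equality at $\eta=|\xi|^{p-2}\xi$; in particular $\sup_{\eta\in\R}\bigl[p\eta\xi-(p-1)|\eta|^{p'}\bigr]=|\xi|^p$. Applied pointwise with $\xi=\partial u_n/\partial x_i(x)$ and $p=p_{i,n}(x)$, then integrated, this gives, for every $\eta\in C_c^\infty(\Omega)$,
$$\int_\Omega \left|\tfrac{\partial u_n}{\partial x_i}\right|^{p_{i,n}(x)}\dx \;\geq\; \int_\Omega p_{i,n}\,\eta\,\tfrac{\partial u_n}{\partial x_i}\dx \;-\; \int_\Omega (p_{i,n}-1)|\eta|^{p_{i,n}'(x)}\dx.$$
The plan is to pass to the limit in $n$ and then optimize over $\eta$.

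For the second integral on the right, the integrand is uniformly bounded (by $(p^+-1)\max(1,\|\eta\|_\infty^{(p^-)'})$) and converges a.e., so dominated convergence yields $\int_\Omega (p_i-1)|\eta|^{p_i'(x)}\dx$. For the first integral I split
$$\int_\Omega p_{i,n}\eta\,\tfrac{\partial u_n}{\partial x_i}\dx \;=\; \int_\Omega p_i\eta\,\tfrac{\partial u_n}{\partial x_i}\dx \;+\; \int_\Omega (p_{i,n}-p_i)\eta\,\tfrac{\partial u_n}{\partial x_i}\dx.$$
Since $p_i\eta\in L^\infty(\Omega)$ and $\partial u_n/\partial x_i\rightharpoonup\partial u/\partial x_i$ in $L^1(\Omega)$, the first piece converges to $\int_\Omega p_i\eta\,\partial u/\partial x_i\dx$. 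For the residual piece I would combine the Dunford--Pettis theorem (the weakly $L^1$-convergent family $\{\partial u_n/\partial x_i\}$ is equi-integrable and $L^1$-bounded) with Egorov's theorem (which, since $\Omega$ has finite measure and $p_{i,n}\to p_i$ a.e., produces uniform convergence off a set of arbitrarily small measure) to show that this piece vanishes. Taking $\liminf_{n\to\infty}$, and using the trivial bound $\int|\partial u_n/\partial x_i|^{p_{i,n}(x)} \leq \sum_{j=1}^N\int|\partial u_n/\partial x_j|^{p_{j,n}(x)}$, I arrive at
$$\int_\Omega\Bigl[p_i(x)\eta\,\tfrac{\partial u}{\partial x_i} - (p_i(x)-1)|\eta|^{p_i'(x)}\Bigr]\dx \;\leq\; \liminf_{n\to\infty}\sum_{j=1}^N\int_\Omega\left|\tfrac{\partial u_n}{\partial x_j}\right|^{p_{j,n}(x)}\dx.$$

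The last step is to take the supremum over $\eta\in C_c^\infty(\Omega)$. The pointwise Legendre identity identifies the would-be supremum of the LHS (over all measurable $\eta$) as $\int_\Omega|\partial u/\partial x_i|^{p_i(x)}\dx$, with formal optimizer $\eta^*(x)=|\partial u/\partial x_i|^{p_i(x)-2}\partial u/\partial x_i$. Finiteness of the RHS will then both force $\partial u/\partial x_i\in L^{p_i(\cdot)}(\Omega)$ and yield the desired inequality. The main obstacle is precisely this last transfer: $\eta^*$ is only measurable, not a priori even locally integrable, and the admissible class is $C_c^\infty(\Omega)$. I expect this to be handled by truncating $\eta^*$ to the set $\{|\partial u/\partial x_i|\leq k\}\cap K$ for $K\Subset\Omega$, mollifying to obtain smooth approximants $\eta^*_{k,\delta}$, applying the displayed inequality to each, and then letting $\delta\to 0$ and $k\to\infty$ via monotone convergence, using the uniform bounds $p^-\leq p_i(x)\leq p^+$ to guarantee that the truncated integrals $\int p_i \eta^*_{k,\delta}\,\partial u/\partial x_i - (p_i-1)|\eta^*_{k,\delta}|^{p_i'(x)}$ ascend to $\int_\Omega|\partial u/\partial x_i|^{p_i(x)}\dx$.
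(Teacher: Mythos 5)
Your proposal follows the same Zhikov-style Legendre/Young duality strategy as the paper, but the normalization of the Young inequality and the choice of test class introduce complications that the paper avoids. The paper writes Young's inequality in the form
\[
\frac{\partial u_n}{\partial x_i}\,b \;\leq\; \left|\frac{\partial u_n}{\partial x_i}\right|^{p_{i,n}(x)} + \frac{1}{p'_{i,n}(x)}\left(\frac{|b|}{p_{i,n}(x)^{1/p_{i,n}(x)}}\right)^{p'_{i,n}(x)},
\]
so the pairing $\int_\Omega \frac{\partial u_n}{\partial x_i}\,b\,\dx$ has \emph{no $p_{i,n}$ factor}; passing to the limit is then immediate from weak $L^1$ convergence against $b\in L^\infty(\Omega)$, with the remaining term handled by dominated convergence since it is uniformly bounded and converges a.e. Your version $|\xi|^p \geq p\eta\xi - (p-1)|\eta|^{p'}$ puts $p_{i,n}$ multiplicatively onto the weakly convergent gradient, which is why you need the extra Dunford--Pettis plus Egorov argument for the residual piece $\int(p_{i,n}-p_i)\eta\,\partial u_n/\partial x_i$. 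That argument does close, but it is avoidable. More significantly, by restricting to $\eta\in C_c^\infty(\Omega)$ you create the difficulty you flag at the end: the truncated optimizer is not smooth and you must mollify and cut to compact support before sending $k\to\infty$. The paper sidesteps this entirely by taking $b\in L^\infty(\Omega)$ from the start (which is the natural class for pairing with weak $L^1$ convergence); the truncated optimizer
\[
b = p_i(x)\left|\frac{\partial u}{\partial x_i}\right|_k^{\frac{1}{p_i'(x)-1}}\cdot\frac{\partial u/\partial x_i}{|\partial u/\partial x_i|},\qquad |w|_k:=\min\{|w|,k\},
\]
is then already admissible (it is bounded by $p^+k^{p^+-1}$), so monotone convergence in $k$ finishes immediately with no mollification. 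So: same core idea, correct in outline, but your execution is more circuitous and leaves the final approximation step only sketched, whereas with $L^\infty$ test functions that step is a single application of monotone convergence.
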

\begin{proof}
Let \( b \in L^\infty(\Omega) \). By a standard Young-type inequality, for each \( i = 1, \dots, N \), we have
\begin{align*}
\frac{\partial u_n}{\partial x_i} \cdot b 
\leq  \left| \frac{\partial u_n}{\partial x_i} \right|^{p_{i,n}(x)} 
+ \frac{1}{p'_{i,n}(x)} \left( \frac{|b|}{(p_{i,n}(x))^{1/p_{i,n}(x)}} \right)^{p'_{i,n}(x)},
\end{align*}
where \( p'_{i,n}(x) = \frac{p_{i,n}(x)}{p_{i,n}(x) - 1} \) is the conjugate exponent of \( p_{i,n}(x) \). Integrating over \( \Omega \) and summing over \( i \), we obtain
\begin{align*}
\sum_{i=1}^N \int_\Omega 
\left( \frac{\partial u_n}{\partial x_i} \cdot b - \frac{|b|^{p'_{i,n}(x)}}{p'_{i,n}(x)(p_{i,n}(x))^{p'_{i,n}(x)/p_{i,n}(x)}} \right) \dx
\leq \sum_{i=1}^N \int_\Omega \left| \frac{\partial u_n}{\partial x_i} \right|^{p_{i,n}(x)} \dx.
\end{align*}
Since \( \frac{\partial u_n}{\partial x_i} \rightharpoonup \frac{\partial u}{\partial x_i} \) in \( L^1(\Omega) \), and \( p_{i,n} \to p_i \) a.e. in \( \Omega \), we may pass to the limit using the Dominated Convergence Theorem to obtain
\begin{equation} \label{eq:limit-b}
\sum_{i=1}^N \int_\Omega 
\left( \frac{\partial u}{\partial x_i} \cdot b 
- \frac{|b|^{p'_i(x)}}{p'_i(x)(p_i(x))^{p'_i(x)/p_i(x)}} \right) 
\dx 
\leq \liminf_{n \to \infty} \sum_{i=1}^N 
\int_\Omega \left| \frac{\partial u_n}{\partial x_i} \right|^{p_{i,n}(x)}\dx:=L
\end{equation}
where \( p'_i(x) = \frac{p_i(x)}{p_i(x) - 1} \).
For each \( k > 0 \), consider the function
\[
b = p_i(x) \left| \frac{\partial u}{\partial x_i} \right|_k^{\frac{1}{p'_i(x)-1}} 
\cdot \frac{\partial u}{\partial x_i} 
\left/ \left| \frac{\partial u}{\partial x_i} \right| \right.,
\quad \text{where } \left| \frac{\partial u}{\partial x_i} \right|_k 
:= \min \left\{ \left| \frac{\partial u}{\partial x_i} \right|, k \right\}.
\]
Substituting this choice of \( b \) into \eqref{eq:limit-b} gives
\begin{align*}
\sum_{i=1}^N \int_\Omega 
\left( 
p_i(x) \left| \frac{\partial u}{\partial x_i} \right|_k 
\cdot \left| \frac{\partial u}{\partial x_i} \right|_k^{\frac{1}{p'_i(x) - 1}} 
- \frac{p_i(x)}{p'_i(x)} 
\cdot \left| \frac{\partial u}{\partial x_i} \right|_k^{\frac{p'_i(x)}{p'_i(x) - 1}} 
\right) \dx \leq L,
\end{align*}
which implies
\begin{equation} \label{eq:bounded-conv}
\sum_{i=1}^N \int_\Omega 
\left| \frac{\partial u}{\partial x_i} \right|_k^{p_i(x)}\dx
\leq L.
\end{equation}
Since \( \left| \frac{\partial u}{\partial x_i} \right|_k \to \left| \frac{\partial u}{\partial x_i} \right| \) pointwise as \( k \to \infty \), and the integrands are nonnegative, we may apply the Monotone Convergence Theorem to \eqref{eq:bounded-conv} to obtain
\[
\sum_{i=1}^N \int_\Omega 
\left| \frac{\partial u}{\partial x_i} \right|^{p_i(x)} \dx 
\leq L := \liminf_{n \to \infty} \sum_{i=1}^N 
\int_\Omega \left| \frac{\partial u_n}{\partial x_i} \right|^{p_{i,n}(x)} \dx.
\]
Thus, we conclude that \( \frac{\partial u}{\partial x_i} \in L^{p_i(x)}(\Omega) \) for each \( i \), and hence \( \nabla u \in (L^{p_i(x)}(\Omega))^N \), which completes the proof.
\end{proof}

\section{Elliptic Problem} \label{sec3}
In this section, we establish the existence of a weak solution to the elliptic problem~\eqref{1.1}. Subsection~\ref{subsec3.1} introduces a perturbed version of the problem, where the existence of a weak solution is proved for the perturbed problem using the theory of pseudomonotone operators. In Subsection~\ref{subsec3.2}, we pass to the limit to obtain the existence of a weak solution to the problem \eqref{1.1}.

Let $u:\Omega\rightarrow\R$ is a continuous function then define the space $W^{1,\vec{\textbf{p}}(u)}(\Omega)$ as 
$$W^{1,\vec{\textbf{p}}(u)}(\Omega)=\left\lbrace v\in L^{p_M(u)}(\Omega) :  \frac{\partial v}{\partial  x_i}\in L^{p_i(u)}(\Omega), \ \forall i=1,2,\ldots,N \right\rbrace $$ which is a norm space with the norm
$$\|v\|_{W^{1,\vec{\textbf{p}}(u)}(\Omega)}=\|v\|_{L^{p_M(u)}(\Omega)}+\sum_{i=1}^{N}\left\| \frac{\partial v}{\partial  x_i}\right\|_{ L^{p_i(u)}(\Omega)} $$ where
$$\left\|  \frac{\partial v}{\partial  x_i}\right\| _{L^{p_i(u)}(\Omega)}=\inf\left\lbrace \tau>0: \ \int_{\Omega}\left| \frac{\partial v}{\tau\partial  x_i} \right|^{p_i(u)}   \dx\leq 1  \right\rbrace \cdot$$ 
Next, consider the closure of $C_c^\infty(\Omega)$ with respect to the space  $W^{1,\vec{\textbf{p}}(u)}(\Omega)$ and denote it as $W^{1,\vec{\textbf{p}}(u)}_{0}(\Omega)$, i.e., $W^{1,\vec{\textbf{p}}(u)}_{0}(\Omega)=\overline{C_c^\infty(\Omega)}|^{\|\cdot\|_{W^{1,\vec{\textbf{p}}(u)}(\Omega)}}$ which is a norm space with the norm 
$$\|v\|_{W^{1,\vec{\textbf{p}}(u)}_{0}(\Omega)}=\sum_{i=1}^{N}\left\| \frac{\partial v}{\partial  x_i}\right\|_{ L^{p_i(u)}(\Omega)}\cdot $$
By employing Theorems \ref{denseness} and \ref{denseness_0} in conjunction with the fact that the set of all H\"older continuous functions is contained within the set of all log-H\"older continuous functions, we have the following result:
\begin{theorem}\label{denseness_1}
Let $\Omega\subset \R^N$ be a bounded domain with a Lipschitz boundary. Suppose that conditions $(p_{1})$-$(p_{2}) $ are satisfied and $u\in W_0^{1,\vec{\textbf{p}}(u)}(\Omega)$. Then $C_c^\infty(\Omega)$ is dense in $W_0^{1,\vec{\textbf{p}}(u)}(\Omega)$. 
\end{theorem}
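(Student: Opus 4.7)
The plan is to view $W^{1,\vec{\textbf{p}}(u)}_0(\Omega)$ as a standard anisotropic variable-exponent space with exponent vector $\vec{\textbf{q}}(x) := \vec{\textbf{p}}(u(x))$, and then apply the already-established density/regularity machinery (Theorems \ref{denseness} and \ref{denseness_0}) to the exponent $\vec{\textbf{q}}$. The heart of the argument is a short bootstrap: assumption $(p_1)$ forces $u$ to be H\"older continuous, which then transfers, via the Lipschitz assumption $(p_2)$, to H\"older regularity of $\vec{\textbf{q}}$, which in turn gives log-H\"older regularity -- exactly the input needed by Theorem \ref{denseness}.

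Concretely, I would first note that $(p_1)$ gives $q_i(x) = p_i(u(x)) \ge p^- > N$ for all $x$ and $i$, so that the harmonic mean $\overline{q}(x) \ge p^- > N$ holds uniformly. Theorem \ref{denseness_0} then produces $\alpha \in (0,1)$ and the embedding $W^{1,\vec{\textbf{q}}(\cdot)}_0(\Omega) \hookrightarrow C^{0,\alpha}(\overline{\Omega})$, so in particular $u \in C^{0,\alpha}(\overline{\Omega})$. Combined with $(p_2)$, for $x_1,x_2 \in \overline{\Omega}$,
\[
|q_i(x_1)-q_i(x_2)| \le c_i\,|u(x_1)-u(x_2)| \le c_i\,[u]_{C^{0,\alpha}}\,|x_1-x_2|^{\alpha},
\]
so each $q_i$ is H\"older of exponent $\alpha$ on $\overline{\Omega}$. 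Using the elementary limit $t^{\alpha}\ln(1/t)\to 0$ as $t \to 0^+$, I would then pick $L>0$ so that $c_i[u]_{C^{0,\alpha}} t^{\alpha}\ln(1/t) \le L$ for $t \in (0,1/2]$, which converts the H\"older bound into the log-H\"older bound
\[
|q_i(x_1)-q_i(x_2)| \le \frac{L}{\ln(1/|x_1-x_2|)}, \qquad |x_1-x_2| \le \tfrac{1}{2}.
\]
With $\vec{\textbf{q}} \in (C_+(\overline{\Omega}))^N$ log-H\"older continuous, Theorem \ref{denseness} delivers both the density of $C_c^\infty(\Omega)$ in $\mathring{W}^{1,\vec{\textbf{q}}(\cdot)}(\Omega)$ and the identity $W^{1,\vec{\textbf{q}}(\cdot)}_0(\Omega) = \mathring{W}^{1,\vec{\textbf{q}}(\cdot)}(\Omega)$; translating back through $q_i(x) = p_i(u(x))$ gives the assertion.

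The one delicate point to address carefully is the mild circularity at the start: to apply Theorem \ref{denseness_0} one needs $\vec{\textbf{q}} \in (C_+(\overline{\Omega}))^N$, while the continuity of $q_i(x) = p_i(u(x))$ requires a continuous representative of $u$. I would handle this by observing that the very assumption $u \in W^{1,\vec{\textbf{p}}(u)}_0(\Omega)$ is meaningful only when $\vec{\textbf{q}}$ is admissible as a variable exponent, and that continuity of each $p_i$ on $\mathbb{R}$ together with any continuous representative of $u$ (which exists within the closure generated by smooth functions under a norm that already forces $\overline{q}(x) > N$) is enough to initiate the bootstrap. Once this initial step is justified, the rest of the argument is essentially a two-line concatenation of the cited theorems.
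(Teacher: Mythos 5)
Your plan has the right skeleton (compose $(p_2)$ with H\"older continuity of $u$ to get H\"older continuity of $q_i := p_i\circ u$, pass to log-H\"older, invoke Theorem~\ref{denseness}), and that part agrees with the paper's argument. However, the way you initiate the bootstrap has a genuine circularity that your last paragraph acknowledges but does not actually resolve. You invoke Theorem~\ref{denseness_0} with exponent vector $\vec{\textbf{q}}(x) = \vec{\textbf{p}}(u(x))$ to deduce $u\in C^{0,\alpha}(\overline{\Omega})$, but Theorem~\ref{denseness_0} requires $\vec{\textbf{q}}\in (C_+(\overline{\Omega}))^N$ as a hypothesis, and continuity of each $q_i = p_i\circ u$ is precisely what you are trying to produce: it needs a continuous representative of $u$. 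Saying that ``the assumption $u\in W^{1,\vec{\textbf{p}}(u)}_0(\Omega)$ is meaningful only when $\vec{\textbf{q}}$ is admissible'' is not correct — that space is defined as a closure of $C_c^\infty(\Omega)$ under a modular-type norm, which only requires $u$ to be measurable, not continuous. So as written the argument assumes what it is proving.

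The gap is easily filled, and the paper does it cleanly: since $(p_1)$ gives $p_i(u)\ge p^- > N$ pointwise and $\Omega$ is bounded, one has $u\in W^{1,p^-}_0(\Omega)$ (split the modular integral over $\{|\partial u/\partial x_i|\ge 1\}$ and its complement). The \emph{constant-exponent} Morrey embedding $W^{1,p^-}_0(\Omega)\hookrightarrow C^{0,1-N/p^-}(\overline{\Omega})$ then produces the H\"older representative of $u$ with no reference to variable exponents at all, breaking the circularity and at the same time making the detour through Theorem~\ref{denseness_0} unnecessary. Once you have $u\in C^{0,\alpha}(\overline{\Omega})$ with $\alpha = 1-N/p^-$, your chain
$|q_i(x_1)-q_i(x_2)| \le c_i[u]_{C^{0,\alpha}}|x_1-x_2|^\alpha$, then $t^\alpha\ln(1/t)$ bounded near $0$ gives log-H\"older, then Theorem~\ref{denseness} applies — this is exactly what the paper does and is correct. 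So: replace the invocation of Theorem~\ref{denseness_0} by the scalar Morrey embedding for $W^{1,p^-}_0(\Omega)$ and the proof is complete.
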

\begin{proof}
Let $u\in W_0^{1,\vec{\textbf{p}}(u)}(\Omega)$. By using the condition $(p_1)$ and by Sobolev embedding theorem, we have
$$W^{1,p^-}_0(\Omega)\hookrightarrow C
^{0,1-\frac{N}{p^-}}(\overline{\Omega}).$$ Hence, there exists a constant $C>0$  such that
\begin{equation}\label{4.2}
  |u(x)-u(y)| \leq C \|u\|_{W^{1,p^-}_0(\Omega)} |x-y|^{\alpha}, 
  \quad \forall x,y \in \overline{\Omega}.
\end{equation}
Define the variable exponent $\vec{\textbf{q}}=(q_1,q_2,\ldots,q_N) \in (C_{+}(\overline{\Omega}))^{N}$ by
\[
q_i(x) := p_i(u(x)), \quad \forall x \in \Omega, \ i=1,2,\ldots,N.
\]
By hypothesis $(p_{2})$, for each $i=1,2,\ldots,N$ , we have
\begin{equation}\label{4.3}
 |q_{i}(x)-q_{i}(y)|= |p_i(u(x)) - p_i(u(y))| \leq c_{i}|u(x)-u(y)|, 
  \quad \forall x,y \in \overline{\Omega}.
\end{equation}
Combining \eqref{4.2} with \eqref{4.3}, we deduce that
\[
   |q_{i}(x)-q_{i}(y)|= |p_i(u(x)) - p_i(u(y))|
  \leq c_{i}C \|u\|_{W^{1,p^-}_0(\Omega)} |x-y|^{\alpha}, 
  \quad \forall x,y \in \overline{\Omega}.
\]
Thus, each $q_{i}$ is H\"older continuous.  
Since every H\"older continuous function is also log-H\"older continuous, there exists a constant $L > 0$ such that
\begin{equation*}
 |q_{i}(x)-q_{i}(y)| \leq \frac{-L}{\log|x-y|}, 
  \quad \forall x,y \in \overline{\Omega}, \ |x-y| < \tfrac{1}{2}.
\end{equation*}
Hence, $q_i$ is log-H\"older continuous for each $i=1,2,\ldots,N$.  
Applying Theorem~\ref{denseness}, we conclude that 
$\mathcal{C}_{c}^{\infty}(\Omega)$ is dense in $W_0^{1,\vec{\textbf{p}}(u)}(\Omega)$.
\end{proof}

We define a weak solution to the problem \eqref{1.1} as follows: 
\begin{definition}
 A function $u\in W^{1,\vec{\textbf{p}}(u)}_{0}(\Omega)$ is called a weak solution to problem \eqref{1.1} if
$$\sum_{i=1}^{N} \int_{\Omega}\left|\frac{\partial u}{\partial  x_i} \right|^{p_i(u)-2}\frac{\partial u}{\partial  x_i}\frac{\partial v}{\partial  x_i}\dx  =\int_{\Omega}f(x,u)v  \dx,
 \ \forall v\in W^{1,\vec{\textbf{p}}(u)}_{0}(\Omega).
$$
\end{definition}

\subsection{Perturbed problem}\label{subsec3.1}
We first consider the following auxiliary problem:
\begin{equation}\label{1.2}
\begin{cases}
-\Delta_{\vec{\mathbf{p}}(u)}u - \epsilon \Delta_{p^+}u = f(x,u), & \text{in } \Omega, \\
u = 0, & \text{on } \partial\Omega.
\end{cases}
\end{equation}

\begin{definition}
A function $u \in W^{1,p^+}_0(\Omega)$ is called a weak solution of \eqref{1.2} if
$$\sum_{i=1}^{N} \int_{\Omega}\left|\frac{\partial u}{\partial  x_i} \right|^{p_i(u)-2}\frac{\partial u}{\partial  x_i}\frac{\partial v}{\partial  x_i}\dx+\epsilon\int_{\Omega}|\nabla u|^{p^{+}-2} \nabla u \nabla v \dx =\int_{\Omega}f(x,u)v \dx, \ \forall v\in W^{1,p^+}_{0}(\Omega).$$
\end{definition}
We define the operator $I: W^{1,p^+}_{0}(\Omega)\rightarrow( W^{1, p^+}_{0}(\Omega))^{*}$ as 
\begin{equation*}
\langle I(u),v\rangle_{W^{1,p^+}_{0}(\Omega)}=\sum_{i=1}^{N} \int_{\Omega}\left|\frac{\partial u}{\partial  x_i} \right|^{p_i(u)-2}\frac{\partial u}{\partial  x_i}\frac{\partial v}{\partial  x_i}\dx+\epsilon\int_{\Omega}|\nabla u|^{p^{+}-2} \nabla u \nabla v \dx -\int_{\Omega} f(x,u) v\dx
\end{equation*}
where
$\langle \cdot,\cdot \rangle_{W^{1,p^+}_{0}(\Omega)}$ stands the duality map between $ W^{1,p^+}_{0}(\Omega)$ and its dual space $( W^{1,p^+}_{0}(\Omega))^{*},$ for the simplicity, we write it as $\langle \cdot,\cdot \rangle$. We see that $u\in  W^{1,p^+}_{0}(\Omega)$ is a weak solution to \eqref{1.2} if and only if
$\langle I(u),v\rangle=0$
for all $ v\in  W^{1,p^+}_{0}(\Omega).$
\begin{theorem}\label{per_theorem}
Assume that conditions $(f_{1})$ and $(p_{1})$ are satisfied. Then problem \eqref{1.2} admits a nontrivial weak solution.
\end{theorem}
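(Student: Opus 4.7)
The plan is to apply Theorem~\ref{the1} to the operator $I$ on the reflexive Banach space $W^{1,p^+}_{0}(\Omega)$ by verifying that $I$ is bounded, coercive, and pseudomonotone. Condition $(p_{1})$ gives $p^{+}\geq p^{-}>N$, so Morrey's embedding provides the compact chain $W^{1,p^+}_{0}(\Omega)\hookrightarrow C^{0,\alpha}(\overline{\Omega})\hookrightarrow C(\overline{\Omega})$. Consequently, for each $u\in W^{1,p^+}_{0}(\Omega)$ the composition $p_{i}\circ u$ is continuous with $p_{i}^{-}\leq p_{i}(u(x))\leq p^{+}$, so $L^{p^+}(\Omega)$ embeds continuously into each $L^{p_{i}(u)}(\Omega)$ and $I(u)\in (W^{1,p^+}_{0}(\Omega))^{*}$ is well-defined. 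Boundedness follows from H\"older's inequality together with Proposition~\ref{rel} applied to the anisotropic integrals and the subcritical growth condition $(f)$ for the reaction term.

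For coercivity I would test $I$ against $u$ itself:
\[
\langle I(u),u\rangle = \sum_{i=1}^{N}\int_{\Omega}\left|\frac{\partial u}{\partial x_{i}}\right|^{p_{i}(u)}\dx + \epsilon\int_{\Omega}|\nabla u|^{p^{+}}\dx -\int_{\Omega} f(x,u)\,u\dx.
\]
Dropping the nonnegative anisotropic sum and estimating the reaction term by $C(\|u\|_{L^{1}(\Omega)}+\|u\|_{L^{r}(\Omega)}^{r})$ via $(f)$ with $r<p^{-}\leq p^{+}$, the Poincar\'e inequality on $W^{1,p^+}_{0}(\Omega)$ together with Young's inequality allows the lower-order terms to be absorbed into $\tfrac{\epsilon}{2}\|\nabla u\|_{L^{p^+}(\Omega)}^{p^{+}}$, yielding $\langle I(u),u\rangle\geq \tfrac{\epsilon}{2}\|u\|_{W^{1,p^+}_{0}(\Omega)}^{p^{+}} - C$, which gives coercivity.

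The main obstacle is pseudomonotonicity. Suppose $u_{n}\rightharpoonup u$ in $W^{1,p^+}_{0}(\Omega)$ with $\limsup_{n\to\infty}\langle I(u_{n}),u_{n}-u\rangle\leq 0$. By the compact embedding into $C(\overline{\Omega})$ and continuity of each $p_{i}$ we have, along a subsequence, $u_{n}\to u$ uniformly and $p_{i}(u_{n}(\cdot))\to p_{i}(u(\cdot))$ uniformly on $\overline{\Omega}$, while $\int_{\Omega}f(x,u_{n})(u_{n}-u)\dx\to 0$ by dominated convergence. Let $A_{n}$ denote the anisotropic part of $\langle I(u_{n}),u_{n}-u\rangle$. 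Using the pointwise monotonicity inequality from Lemma~\ref{inequalities} with exponent $p_{i}(u_{n}(x))\geq 2$, I obtain
\[
A_{n}\geq \sum_{i=1}^{N}\int_{\Omega}\left|\frac{\partial u}{\partial x_{i}}\right|^{p_{i}(u_{n})-2}\frac{\partial u}{\partial x_{i}}\cdot\frac{\partial (u_{n}-u)}{\partial x_{i}}\dx,
\]
and the right-hand side converges to $0$ because its integrand converges a.e.\ and is bounded in $L^{(p^{+})'}(\Omega)$ (uniform convergence of exponents together with $\partial_{i}u\in L^{p^{+}}(\Omega)$), while $\partial_{i}(u_{n}-u)\rightharpoonup 0$ weakly in $L^{p^{+}}(\Omega)$. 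Hence $\liminf_{n}A_{n}\geq 0$, and the hypothesis forces
\[
\limsup_{n\to\infty}\epsilon\int_{\Omega}|\nabla u_{n}|^{p^{+}-2}\nabla u_{n}\cdot\nabla(u_{n}-u)\dx\leq 0.
\]
The classical $(S_{+})$-property of $-\Delta_{p^{+}}$ on $W^{1,p^+}_{0}(\Omega)$ then upgrades this to strong convergence $u_{n}\to u$.

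With strong convergence in hand, $\nabla u_{n}\to\nabla u$ in $L^{p^{+}}(\Omega)^{N}$, and combined with the uniform convergence of the exponents this lets me pass to the limit in every term of $I(u_{n})$, giving $I(u_{n})\to I(u)$ in $(W^{1,p^+}_{0}(\Omega))^{*}$ and $\langle I(u_{n}),u_{n}\rangle\to\langle I(u),u\rangle$, which establishes pseudomonotonicity. Theorem~\ref{the1} then produces $u\in W^{1,p^+}_{0}(\Omega)$ with $I(u)=0$, a weak solution of \eqref{1.2}. Nontriviality is immediate: if $u\equiv 0$ solved \eqref{1.2}, the weak formulation would reduce to $\int_{\Omega}f(x,0)\,v\dx=0$ for every $v\in W^{1,p^+}_{0}(\Omega)$, contradicting $f(\cdot,0)<0$.
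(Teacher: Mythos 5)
Your proposal follows essentially the same strategy as the paper: define the operator $I$ on $W^{1,p^+}_0(\Omega)$, verify boundedness, coercivity, the $(S_+)$-property and pseudomonotonicity, and conclude via the pseudomonotone existence theorem (Theorem~\ref{the1}), with the paper carrying out these verifications across Lemmas~\ref{corecive}--\ref{pseudomonotone}. Your coercivity and nontriviality arguments are identical to the paper's. The one place where your write-up is actually tidier than the paper's is the derivation of the $(S_+)$-property: you explicitly invoke the compact Morrey chain $W^{1,p^+}_0(\Omega)\hookrightarrow\hookrightarrow C(\overline{\Omega})$ (available since $p^{-}>N$) to obtain a.e.\ (indeed uniform) convergence $p_i(u_n(\cdot))\to p_i(u(\cdot))$, then use dominated convergence in $L^{(p^+)'}$ on the fixed-gradient factor $|\partial_i u|^{p_i(u_n)-2}\partial_i u$ paired with weak convergence of $\partial_i(u_n-u)$ in $L^{p^+}$. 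The paper reaches the same conclusion but its ``Observe that'' estimate at that point mixes a pointwise absolute-value bound with a signed weak-convergence limit and tacitly assumes the a.e.\ convergence of the exponents without stating where it comes from; your version makes that mechanism explicit. Both proofs then establish pseudomonotonicity by first upgrading to strong convergence in $W^{1,p^+}_0(\Omega)$ via $(S_+)$ and then passing to the limit term-by-term in $I(u_n)$, though your final paragraph is more compressed than the paper's detailed Claims (a)--(c), which carry out the term-by-term limit via Lemma~\ref{inequalities} and H\"older splitting; you would want to supply that level of detail to close the argument fully.
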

 Next, we verify each of the conditions required by Theorem~\ref{the1} in a systematic manner. These verifications are presented through Lemmas~\ref{corecive}–\ref{pseudomonotone}, where we establish the coercivity, boundedness, $S^+$-type property and pseudomonotonicity of the operator $I$.
\begin{lemma}\label{corecive}
Assume that conditions $(f_{1})$ and $(p_{1})$ are satisfied. Then the operator $I$ is coercive and bounded.
\end{lemma}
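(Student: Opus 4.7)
The plan is to verify coercivity and boundedness directly from the definition, using the fact that the $\epsilon$-regularization by the $p^+$-Laplacian produces the dominant term, so the Sobolev space $W^{1,p^+}_0(\Omega)$ becomes the natural functional framework. Throughout I would use the equivalent norm $\|v\|:=\sum_{i=1}^N \|\partial v/\partial x_i\|_{L^{p^+}(\Omega)}$ on $W^{1,p^+}_0(\Omega)$, together with the Sobolev embedding $W^{1,p^+}_0(\Omega)\hookrightarrow L^s(\Omega)$ for any $s\in[1,\infty]$ (this follows from $(p_1)$ since $p^+\ge p^->N$).

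For coercivity, I would test with $v=u$. The first (anisotropic) summand equals $\sum_i \int_\Omega |\partial_i u|^{p_i(u)}\,dx\ge 0$ and can be discarded. The regularization gives exactly $\epsilon\int_\Omega |\nabla u|^{p^+}\,dx\ge c_0\epsilon\|u\|^{p^+}$. The source term is controlled by $(f)$:
\begin{equation*}
\left|\int_\Omega f(x,u)u\,dx\right|\le c\int_\Omega(|u|+|u|^r)\,dx \le C\bigl(\|u\|+\|u\|^r\bigr),
\end{equation*}
where I used Sobolev embedding and $r<p^-\le p^+$. Dividing by $\|u\|$ and sending $\|u\|\to\infty$, the leading term $\epsilon c_0\|u\|^{p^+-1}$ dominates both $\|u\|^{r-1}$ (since $r-1<p^+-1$) and the constant, giving coercivity.

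For boundedness, I fix $u$ in a bounded subset of $W^{1,p^+}_0(\Omega)$ and estimate $|\langle I(u),v\rangle|$ for arbitrary $v$. The delicate term is the anisotropic one: since $2\le p_i(u(x))\le p^+$, the pointwise bound $|\partial_i u|^{p_i(u)-1}\le 1+|\partial_i u|^{p^+-1}$ holds, so Hölder's inequality with exponent $p^+$ yields
\begin{equation*}
\left|\int_\Omega |\partial_i u|^{p_i(u)-2}\partial_i u\,\partial_i v\,dx\right|\le C\bigl(1+\|u\|^{p^+-1}\bigr)\|v\|.
\end{equation*}
The regularization term is handled by standard Hölder as $\le \epsilon\|u\|^{p^+-1}\|v\|$, and the source term by $(f)$ and Sobolev embedding as $\le C(1+\|u\|^{r-1})\|v\|$. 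Summing these gives $\|I(u)\|_*\le C(1+\|u\|^{p^+-1})$, which is bounded on bounded sets.

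I do not anticipate a serious obstacle here; the design of the perturbation is precisely to make $p^+$ the controlling exponent, so all three terms fit under the single norm of $W^{1,p^+}_0(\Omega)$. The only point requiring mild care is the pointwise inequality $|\partial_i u|^{p_i(u)-1}\le 1+|\partial_i u|^{p^+-1}$, which replaces a variable-exponent Hölder estimate by a simple $L^{p^+}$ one and is what allows the whole argument to live in the fixed Banach space $W^{1,p^+}_0(\Omega)$ rather than in the $u$-dependent space $W^{1,\vec{\mathbf{p}}(u)}_0(\Omega)$.
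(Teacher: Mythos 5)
Your proof is correct and follows essentially the same route as the paper: discard the nonnegative anisotropic term for coercivity, let the $\epsilon\,p^+$-Laplacian dominate, and handle boundedness by reducing the variable exponent to fixed ones. The only cosmetic difference is in the boundedness step, where you use the pointwise bound $|\partial_i u|^{p_i(u)-1}\le 1+|\partial_i u|^{p^+-1}$ followed by a single H\"older estimate in $L^{p^+}$, whereas the paper splits as $|\partial_i u|^{p_i(u)-1}\le |\partial_i u|^{p^+-1}+|\partial_i u|^{p^--1}$ and applies H\"older at both exponents (and packages the $f$-term via the Nemytskii operator $N_f$ rather than estimating it directly); both variants are valid and yield the same conclusion.
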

\begin{proof}
By using Sobolev embedding theorem and assumption $(f)$, we estimate
\begin{align*}
\langle I(u),u\rangle&=\sum_{i=1}^{N} \int_{\Omega}\left|\frac{\partial u}{\partial  x_i} \right|^{p_i(u)}\dx+\epsilon\int_{\Omega}|\nabla u|^{p^{+}}\dx  -\int_{\Omega} f(x,u)u\dx\\
&\geq \epsilon\int_{\Omega}|\nabla u|^{p^{+}}\dx-C\int_{\Omega}(u+|u|^{r})\dx\\
&\geq \epsilon \|u\|^{p^+}_{W^{1,p^+}_{0}(\Omega)}-C\|u\|^{r}_{W^{1,p^+}_{0}(\Omega)}-C\|u\|_{W^{1,p^+}_{0}(\Omega)},
\end{align*}
 where $1\leq r<p^{+}$. Hence, $I$ is coercive.

By condition $(f)$, the Nemitsky operator $N_{f}:W^{1,p^+}_{0}(\Omega)\rightarrow L^{r'}(\Omega)$  is well defined. Moreover, let  $i^{*}:L^{r'}(\Omega) \rightarrow (W^{1,p^+}_{0}(\Omega))^{*}$ be the adjoint of the embedding $i:W^{1,p^+}_{0}(\Omega)\rightarrow L^{r}(\Omega)$. Then, the operator $I_{1}:=i^{*}\circ N_{f}:W^{1,p^+}_{0}(\Omega)\rightarrow(W^{1,p^+}_{0}(\Omega))^{*}$ is continuous and bounded.

Now, for any $v\in W^{1,p^+}_{0}(\Omega)$, we estimate
\begin{align*}
\left|\langle I(u)+I_{1}(u),v\rangle \right|&=\sum_{i=1}^{N} \int_{\Omega}\left|\frac{\partial u}{\partial  x_i} \right|^{p_i(u)-1}\left|\frac{\partial v}{\partial  x_i}\right|\dx+\epsilon\int_{\Omega}|\nabla u|^{p^{+}-1} \left|\nabla v \right|\dx\\
& \leq \sum_{i=1}^{N} \int_{\Omega}\left(\left|\frac{\partial u}{\partial x_{i}}\right|^{p^{+}-1} \left|\frac{\partial v}{\partial x_{i}}\right|+\left|\frac{\partial u}{\partial x_{i}}\right|^{p^{-}-1} \left|\frac{\partial v}{\partial x_{i}}\right|\right)\dx+\epsilon\int_{\Omega}|\nabla u|^{p^{+}-1} \left|\nabla v \right|\dx\\
& \leq \sum_{i=1}^{N} \left\|\frac{\partial u}{\partial x_{i}}\right\|^{p^{+}-1}_{L^{p^{+}}(\Omega)}\left\|\frac{\partial v}{\partial x_{i}}\right\|_{L^{p^{+}}(\Omega)}+\sum_{i=1}^{N} \left\|\frac{\partial u}{\partial x_{i}}\right\|^{p^{-}-1}_{L^{p^{-}}(\Omega)}\left\|\frac{\partial v}{\partial x_{i}}\right\|_{L^{p^{-}}(\Omega)}\\
&+\epsilon \|\nabla u\|^{p^{+}-1}_{L^{p^{+}}(\Omega)}\|\nabla v\|_{L^{p^{+}}(\Omega)}\\
&\leq \|v\|_{W^{1,p^+}_{0}(\Omega)} \left[\sum_{i=1}^{N} \left\|\frac{\partial u}{\partial x_{i}}\right\|^{p^{+}-1}_{L^{p^{+}}(\Omega)}+\sum_{i=1}^{N} \left\|\frac{\partial u}{\partial x_{i}}\right\|^{p^{-}-1}_{L^{p^{-}}(\Omega)}+\epsilon \|\nabla u\|^{p^{+}-1}_{L^{p^{+}}(\Omega)}\right].
\end{align*}
Therefore, we have
\begin{align*}
\|I(u)+I_{1}(u)\|_{W^{1,p^+}_{0}(\Omega)}&=\sup\limits_{\|v\|_{W^{1,p^+}_{0}(\Omega)}\leq 1}\{|\langle I(u)+I_{1}(u),v\rangle_{W^{1,p^+}_{0}(\Omega)}|\}\\
&\leq  \sum_{i=1}^{N} \left\|\frac{\partial u}{\partial x_{i}}\right\|^{p^{+}-1}_{L^{p^{+}}(\Omega)}+\sum_{i=1}^{N} \left\|\frac{\partial u}{\partial x_{i}}\right\|^{p^{-}-1}_{L^{p^{-}}(\Omega)}+\epsilon \|\nabla u\|^{p^{+}-1}_{L^{p^{+}}(\Omega)},
\end{align*}
which shows that $I$ maps bounded sets into bounded sets, and hence  $I$ is a bounded operator.
\end{proof}
\begin{lemma}\label{s_+}
Assume that the conditions  $(f_{1})$ and $(p_{1})$ are satisfied. Then the operator $I$ satisfies the $(S_{+})$-property that is, if  $\{u_{n}\}\subset W^{1,p^+}_{0}(\Omega)$ such that $u_{n}\rightharpoonup u$  in $W^{1,p^+}_{0}(\Omega)$ and
$$\limsup_{n\rightarrow\infty}\langle I(u_{n}),u_{n}-u\rangle\leq 0,$$
then $\{u_{n}\}$ strongly converges to $u$ in $W^{1,p^+}_{0}(\Omega)$.
\end{lemma}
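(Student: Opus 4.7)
The plan is to exploit the $\epsilon$-regularization by the $p^+$-Laplacian, which dominates the anisotropic variable-exponent term, together with the monotonicity inequalities of Lemma \ref{inequalities}. Since $(p_1)$ gives $p^+ > N$, the compact Sobolev embedding $W^{1,p^+}_0(\Omega)\hookrightarrow\hookrightarrow C(\overline{\Omega})$ applies, so $u_n \to u$ uniformly on $\overline{\Omega}$. Combined with the Lipschitz assumption $(p_2)$, this yields uniform convergence $p_i(u_n)\to p_i(u)$ for each $i$. The growth condition $(f)$ together with the uniform bound on $\{u_n\}$ then gives $\int_{\Omega} f(x,u_n)(u_n-u)\dx \to 0$ by dominated convergence.

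Next I would split the remainder of $\langle I(u_n), u_n-u\rangle$ into the anisotropic variable-exponent part and the $p^+$-regularization part, and show the first has nonnegative $\liminf$. Applying the monotonicity of $\xi\mapsto|\xi|^{p-2}\xi$ pointwise with exponent $p_i(u_n(x))\ge 2$ yields
\[
\sum_{i=1}^{N}\int_{\Omega}\left|\frac{\partial u_n}{\partial x_i}\right|^{p_i(u_n)-2}\frac{\partial u_n}{\partial x_i}\frac{\partial(u_n-u)}{\partial x_i}\dx \;\ge\; \sum_{i=1}^{N}\int_{\Omega}\left|\frac{\partial u}{\partial x_i}\right|^{p_i(u_n)-2}\frac{\partial u}{\partial x_i}\frac{\partial(u_n-u)}{\partial x_i}\dx.
\]
The right-hand side tends to $0$: the factor $|\partial u/\partial x_i|^{p_i(u_n)-2}\,\partial u/\partial x_i$ converges a.e.\ to $|\partial u/\partial x_i|^{p_i(u)-2}\,\partial u/\partial x_i$, it is dominated by the fixed function $1+|\partial u/\partial x_i|^{p^+-1}\in L^{(p^+)'}(\Omega)$, so the dominated convergence theorem gives strong convergence in $L^{(p^+)'}(\Omega)$, while $\partial(u_n-u)/\partial x_i\rightharpoonup 0$ weakly in $L^{p^+}(\Omega)$.

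Combining the nonnegative $\liminf$ of the variable-exponent part with the vanishing of the $f$-term and the hypothesis $\limsup_n\langle I(u_n),u_n-u\rangle \le 0$ forces
\[
\limsup_{n\to\infty}\int_{\Omega}|\nabla u_n|^{p^+-2}\nabla u_n\cdot \nabla(u_n-u)\dx \le 0.
\]
Invoking Lemma \ref{inequalities} once more, this time with $p=p^+\ge 2$, produces
\[
C\int_{\Omega}|\nabla(u_n-u)|^{p^+}\dx \;\le\; \int_{\Omega}|\nabla u_n|^{p^+-2}\nabla u_n\cdot\nabla(u_n-u)\dx - \int_{\Omega}|\nabla u|^{p^+-2}\nabla u\cdot\nabla(u_n-u)\dx,
\]
and the last integral vanishes as $n\to\infty$ by weak convergence $\nabla u_n\rightharpoonup\nabla u$ in $L^{p^+}(\Omega)$ tested against the fixed vector field $|\nabla u|^{p^+-2}\nabla u\in L^{(p^+)'}(\Omega)$. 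Hence $\nabla u_n\to \nabla u$ in $L^{p^+}(\Omega)$, and therefore $u_n\to u$ in $W^{1,p^+}_0(\Omega)$.

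The main technical obstacle is that the exponent in the first term depends on $n$ through $u_n$, which breaks the natural duality between the flux and the test gradient. The resolution is to substitute $u$ for $u_n$ \emph{inside} the nonlinearity while \emph{keeping} the exponent $p_i(u_n)$; this preserves the pointwise monotonicity of $\xi\mapsto|\xi|^{p-2}\xi$ and allows a single dominated-convergence argument built on the uniform bound $p_i\le p^+$ supplied by $(p_1)$ and the uniform convergence of $p_i(u_n)$ afforded by the embedding into $C(\overline{\Omega})$. Without the $\epsilon p^+$-regularization, the integrable majorant $1+|\partial u/\partial x_i|^{p^+-1}$ would not be available, and the rest of the argument would collapse.
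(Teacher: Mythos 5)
Your proposal is correct and follows essentially the same strategy as the paper: substitute $u$ for $u_n$ inside the flux while keeping the $n$-dependent exponent $p_i(u_n)$ to exploit pointwise monotonicity, show the resulting lower bound vanishes, dispose of the $f$-term, then run the $(S_+)$ argument on the $\varepsilon p^+$-regularization via Lemma \ref{inequalities}. The one mildly different piece is how you handle $\sum_i\int |\partial u/\partial x_i|^{p_i(u_n)-2}\,\partial u/\partial x_i\,\partial(u_n-u)/\partial x_i\dx$: you apply dominated convergence directly to the flux $|\partial u/\partial x_i|^{p_i(u_n)-2}\partial u/\partial x_i$ to get strong $L^{(p^+)'}$-convergence and pair it with $\partial(u_n-u)/\partial x_i\rightharpoonup 0$ in $L^{p^+}$, whereas the paper splits this term once more into a ``difference of exponents'' part and a ``fixed exponent $p_i(u)$'' part and treats them separately; your version is somewhat cleaner. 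Two small points worth noting: (i) you invoke $(p_2)$ to get uniform convergence of $p_i(u_n)$, but the lemma's stated hypotheses are only $(f)$ and $(p_1)$; continuity of $p_i$ (a standing assumption) together with the compact embedding $W^{1,p^+}_0(\Omega)\hookrightarrow\hookrightarrow C(\overline\Omega)$ already gives you $p_i(u_n)\to p_i(u)$ pointwise, which is all the dominated-convergence argument needs, so the appeal to $(p_2)$ should be dropped. (ii) Your dominating function should be of the form $|\partial u/\partial x_i|^{p^+-1}+|\partial u/\partial x_i|^{p^--1}$ (or the $1+|\partial u/\partial x_i|^{p^+-1}$ you wrote, which majorizes it on a bounded domain); this is in $L^{(p^+)'}(\Omega)$, so the step is fine.
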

\begin{proof}
Let $\{u_{n}\}\subset W^{1,p^+}_{0}(\Omega)$ such that $u_{n}\rightharpoonup u$  in $W^{1,p^+}_{0}(\Omega)$ and $$\limsup_{n\rightarrow\infty}\langle I(u_{n}),u_{n}-u\rangle\leq 0.$$ We aim to prove that $\{u_{n}\}$ converges to $u$ strongly in $W^{1,p^+}_{0}(\Omega)$. 

From the definition of the operator $I$, we write
\begin{align}\label{s+1.0}
 \left\langle I\left(u_{n}\right), u_{n}-u\right\rangle&=
\sum_{i=1}^{N} \int_{\Omega}\left|\frac{\partial u_{n}}{\partial x_{i}}\right|^{p_{i}\left(u_{n}\right)-2} \frac{\partial u_{n}}{\partial x_{i}} \frac{\partial\left(u_{n}-u\right)}{\partial x_{i}}\dx+\epsilon \int_{\Omega}\left|\nabla u_{n}\right|^{p^{+}-2}  \nabla u_{n} \nabla\left(u_{n}-u\right)\dx\notag\\
&-\int_{\Omega} f(x,u_{n})(u_{n}-u)\dx.
\end{align}
Claim (a):
$$
\sum_{i=1}^{N} \int_{\Omega}\left|\frac{\partial u_{n}}{\partial x_{i}}\right|^{p_{i}\left(u_{n}\right)-2} \frac{\partial u_{n}}{\partial x_{i}} \frac{\partial\left(u_{n}-u\right)}{\partial x_{i}}\dx \geq o_{n}(1).
$$
To establish this, we proceed as follows. Using monotonicity and from Lemma~\ref{inequalities}, we have
\begin{align}\label{s+1.1}
 \sum_{i=1}^{N} \int_{\Omega}&\left|\frac{\partial u_{n}}{\partial x_{i}}\right|^{p_{i}\left(u_{n}\right)-2} \frac{\partial u_{n}}{\partial x_{i}} \frac{\partial\left(u_{n}-u\right)}{\partial x_{i}} \dx \notag\\
& =\sum_{i=1}^{N} \int_{\Omega}\left[\left|\frac{\partial u_n}{\partial x_{i}}\right|^{p_{i}\left(u_{n}\right)-2} \frac{\partial u_n}{\partial x_{i}}-\left|\frac{\partial u}{\partial x_{i}}\right|^{p_{i}\left(u_{n}\right)-2} \frac{\partial u}{\partial x_{i}}\right]\frac{\partial\left(u_{n}-u\right)}{\partial x_{i}}\dx\notag  \\
& +\sum_{i=1}^{N} \int_{\Omega}\left|\frac{\partial u}{\partial x_{i}}\right|^{p_{i}\left(u_{n}\right)-2} \frac{\partial u}{\partial x_{i}} \frac{\partial\left(u_{n}-u\right)}{\partial x_{i}}\dx \notag\\
& \geq \sum_{i=1}^{N} \int_{\Omega}\left|\frac{\partial u_{n}}{\partial x_{i}}-\frac{\partial u}{\partial x_{i}}\right|^{p_{i}\left(u_{n}\right)-2}\dx + \sum_{i=1}^{N} \int_{\Omega}\left|\frac{\partial u}{\partial x_{i}}\right|^{p_{i}\left(u_{n}\right)-2} \frac{\partial u}{\partial x_{i}} \frac{\partial\left(u_{n}-u\right)}{\partial x_{i}}\dx\notag \\
& \geq \sum_{i=1}^{N} \int_{\Omega}\left|\frac{\partial u}{\partial x_{i}}\right|^{p_{i}\left(u_{n}\right)-2} \frac{\partial u}{\partial x_{i}} \frac{\partial\left(u_{n}-u\right)}{\partial x_{i}}\dx.  
\end{align}
Observe that
\begin{align*}
& \sum_{i=1}^{N} \int_{\Omega}\left| \left|\frac{\partial u}{\partial x_{i}}\right|^{p_{i}\left(u_{n}\right)-2} \frac{\partial u}{\partial x_{i}} - \left|\frac{\partial u}{\partial x_{i}}\right|^{p_{i}\left(u\right)-2} \frac{\partial u}{\partial x_{i}} \right| \frac{\partial\left(u_{n}-u\right)}{\partial x_{i}}\dx \notag \\
& \leq 2 \sum_{i=1}^{N} \int_{\Omega}\left(\left|\frac{\partial u}{\partial x_{i}}\right|^{p^{+}-1}+\left|\frac{\partial u}{\partial x_{i}}\right|^{p^{-}-1}\right)\frac{\partial\left(u_{n}-u\right)}{\partial x_{i}}\dx \notag\\
& =2 \sum_{i=1}^{N} \int_{\Omega}\left|\frac{\partial u}{\partial x_{i}}\right|^{p^{+}-1}\frac{\partial\left(u_{n}-u\right)}{\partial x_{i}}\dx+ 2 \sum_{i=1}^{N} \int_{\Omega}\left|\frac{\partial u}{\partial x_{i}}\right|^{p^{-}-1}\frac{\partial\left(u_{n}-u\right)}{\partial x_{i}}\dx\rightarrow 0,
\end{align*}
as $\{u_{n}\}\rightharpoonup u$  in $W^{1,p^+}_{0}(\Omega)$ and 
$\left|\frac{\partial u}{\partial x_{i}}\right|^{p^{+}-1} \in L^{ \frac{p^{+}}{p^{+}-1}}(\Omega).$

Similarly as above,
\begin{align}\label{s+1.3}
\sum_{i=1}^{N} \int_{\Omega}\left|\frac{\partial u}{\partial x_{i}}\right|^{p_{i}\left(u\right)-2} \frac{\partial u}{\partial x_{i}} \frac{\partial\left(u_{n}-u\right)}{\partial x_{i}}\dx \rightarrow 0 \text{ as } n\rightarrow \infty.
\end{align}
The validity of claim (a) follows directly from equations \eqref{s+1.1}-\eqref{s+1.3}.

In view of condition $(f)$, and by applying H\"older's inequality along with the Sobolev embedding theorem, we derive the following
\begin{align}\label{s+1.4}
\lim_{n \rightarrow \infty}\int_{\Omega} f(x,u_{n})(u_{n}-u)\dx=0.
\end{align}
Combining \eqref{s+1.0}, claim (a), and \eqref{s+1.4}, we deduce that
\begin{align*}
\epsilon \lim_{n \rightarrow \infty}\int_{\Omega}\left|\nabla u_{n}\right|^{p^{+}-2}  \nabla u_{n} \nabla\left(u_{n}-u\right) \dx\leq \lim_{n \rightarrow \infty}\left\langle I\left(u_{n}\right), u_{n}-u\right\rangle \\
 \leq \limsup_{n \rightarrow \infty}\left\langle I\left(u_{n}\right), u_{n}-u\right\rangle \leq 0
\end{align*}
which implies
\begin{align}\label{s+1.5}
\quad \lim_{n \rightarrow \infty} \int_{\Omega}\left|\nabla u_{n}\right|^{p^{+}-2} \nabla u_{n} \nabla\left(u_{n}-u\right) \dx\leq 0.
\end{align}
Consider,
\begin{align}\label{s+1.6}
 \int_{\Omega}\left|\nabla u_{n}\right|^{p^{+}-2} \nabla u_{n} \nabla\left(u_{n}-u\right)\dx&= \int_{\Omega}\left[\left|\nabla u_{n}\right|^{p^{+}-2} \nabla u_{n}-|\nabla u|^{p^{+}-2} \nabla u\right] \nabla \left(u_{n}-u\right)\dx \notag\\
&+\int_{\Omega}|\nabla u|^{p^{+}-2} \nabla u \nabla\left(u_{n}-u\right)\dx  \notag\\
& \geq C \int_{\Omega}\left|\nabla\left(u_{n}-u\right)\right|^{p^{+}}\dx+o_n(1).
\end{align}

Combining \eqref{s+1.5} and \eqref{s+1.6}, we conclude that
$$ 0\leq  \lim _{n \rightarrow \infty}\int_{\Omega}\left|\nabla\left(u_{n}-u\right)\right|^{p^{+}}\dx\leq 0$$
and hence,
$$ \lim _{n \rightarrow \infty} \int_{\Omega}\left|\nabla\left(u_{n}-u\right)\right|^{p^{+}}\dx=0.
$$
Therefore, $ u_{n} \rightarrow u$ strongly in $W_{0}^{1, p^{+}}(\Omega)$ thereby confirming that the operator satisfies the $(S_{+})$-property.
\end{proof}
\begin{lemma}\label{pseudomonotone}
Assume that the conditions  $(f_{1})$ and $(p_{1})$ are satisfied. Then the operator $I$ is the pseudomonotone, that is, if $\{u_{n}\}$ converges weakly to $u$ in $W_{0}^{1, p^{+}}(\Omega)$ and $$\limsup\limits_{n\rightarrow\infty}\langle I(u_{n}),u_{n}-u\rangle\leq 0,$$ imply  $I(u_n)\rightharpoonup I(u)$ and 
$\langle I(u_n), u_n\rangle \rightarrow \langle I(u),u\rangle$. \end{lemma}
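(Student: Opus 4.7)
The plan is to deduce pseudomonotonicity from the $(S_{+})$-property established in Lemma~\ref{s_+} together with the demicontinuity of $I$. Given a sequence $\{u_n\}\subset W^{1,p^+}_{0}(\Omega)$ with $u_n \rightharpoonup u$ in $W^{1,p^+}_{0}(\Omega)$ and $\limsup_{n\to\infty}\langle I(u_n), u_n-u\rangle \leq 0$, Lemma~\ref{s_+} immediately yields $u_n \to u$ strongly in $W^{1,p^+}_{0}(\Omega)$. It will then suffice to prove that $I(u_n) \to I(u)$ strongly in $(W^{1,p^+}_{0}(\Omega))^{*}$, since strong convergence in the dual combined with the strong convergence $u_n\to u$ in the primal automatically delivers both $I(u_n) \rightharpoonup I(u)$ and $\langle I(u_n), u_n\rangle \to \langle I(u), u\rangle$.

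To establish $I(u_n)\to I(u)$ in the dual, I would extract a (not relabelled) subsequence such that $\frac{\partial u_n}{\partial x_i} \to \frac{\partial u}{\partial x_i}$ a.e.\ in $\Omega$ and $u_n \to u$ a.e.\ in $\Omega$. The continuity of each $p_i$ then gives $p_i(u_n(x)) \to p_i(u(x))$ a.e., whence $\left|\frac{\partial u_n}{\partial x_i}\right|^{p_i(u_n)-2}\frac{\partial u_n}{\partial x_i} \to \left|\frac{\partial u}{\partial x_i}\right|^{p_i(u)-2}\frac{\partial u}{\partial x_i}$ a.e.\ in $\Omega$. The uniform enclosure $p_i(u_n)\in[p^-,p^+]$ furnishes the pointwise majorization $\left|\frac{\partial u_n}{\partial x_i}\right|^{p_i(u_n)-1} \leq 1 + \left|\frac{\partial u_n}{\partial x_i}\right|^{p^+-1}$, and the right-hand side converges in $L^{(p^+)'}(\Omega)$ because $\frac{\partial u_n}{\partial x_i}\to \frac{\partial u}{\partial x_i}$ strongly in $L^{p^+}(\Omega)$. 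A Vitali-type argument (or the generalized dominated convergence theorem) therefore yields $\left|\frac{\partial u_n}{\partial x_i}\right|^{p_i(u_n)-2}\frac{\partial u_n}{\partial x_i} \to \left|\frac{\partial u}{\partial x_i}\right|^{p_i(u)-2}\frac{\partial u}{\partial x_i}$ strongly in $L^{(p^+)'}(\Omega)$ for each $i$. The same reasoning, applied to the regularizing term, provides $|\nabla u_n|^{p^+-2}\nabla u_n \to |\nabla u|^{p^+-2}\nabla u$ in $(L^{(p^+)'}(\Omega))^{N}$, while the continuity of the Nemitsky operator $N_{f}$ (already invoked in Lemma~\ref{corecive}) gives $f(\cdot,u_n)\to f(\cdot,u)$ in $L^{r'}(\Omega)$. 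Assembling these three strong convergences yields $I(u_n)\to I(u)$ in $(W^{1,p^+}_{0}(\Omega))^{*}$ along the subsequence; a standard subsequence-of-subsequence argument promotes this to convergence of the full sequence.

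The principal obstacle is the combined $n$-dependence of the base and of the exponent in $\left|\frac{\partial u_n}{\partial x_i}\right|^{p_i(u_n)-2}\frac{\partial u_n}{\partial x_i}$, which prevents a direct application of classical dominated convergence. The rescue is the interaction of two ingredients already in hand: the $(S_{+})$-property of Lemma~\ref{s_+}, which upgrades the weak hypothesis to strong convergence in $W^{1,p^+}_{0}(\Omega)$, and the two-sided bracket $p^-\leq p_i(u_n)\leq p^+$, which allows one to dominate by $1+\left|\frac{\partial u_n}{\partial x_i}\right|^{p^+-1}$ and thus to invoke Vitali. Without the prior $(S_{+})$-property, this equi-integrability step would break down, so it is essential that Lemma~\ref{pseudomonotone} be placed after Lemma~\ref{s_+} in the argument.
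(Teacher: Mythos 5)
Your proposal is correct, and it takes a genuinely different route from the paper. Both proofs begin the same way: invoke Lemma~\ref{s_+} to upgrade the hypothesis to strong convergence $u_n\to u$ in $W_0^{1,p^+}(\Omega)$. The paper then proves only weak convergence $I(u_n)\rightharpoonup I(u)$ by testing against a fixed $v$, splitting the anisotropic flux difference into a part where only the base changes and a part where only the exponent changes, and controlling each via H\"older's inequality together with the elementary inequality from Lemma~\ref{inequalities}; the final assertion $\langle I(u_n),u_n\rangle\to\langle I(u),u\rangle$ is then recovered from the decomposition $\langle I(u_n),u_n\rangle-\langle I(u),u\rangle=\langle I(u_n),u_n-u\rangle+\langle I(u_n)-I(u),u\rangle$ together with the sandwich $0\le\liminf\langle I(u_n),u_n-u\rangle\le\limsup\langle I(u_n),u_n-u\rangle\le 0$. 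You instead prove the strictly stronger statement that $I(u_n)\to I(u)$ in $(W_0^{1,p^+}(\Omega))^*$, by passing to a subsequence with a.e.\ convergence of $u_n$ and of $\nabla u_n$, observing continuity of $(s,q)\mapsto|s|^{q-2}s$ (valid since $p_i\ge 2$), dominating by $1+\left|\frac{\partial u_n}{\partial x_i}\right|^{p^+-1}$ (using the bracket $p^-\le p_i(u_n)\le p^+$), and applying the generalized dominated convergence theorem / Vitali with the moving majorant $1+\left|\frac{\partial u_n}{\partial x_i}\right|^{p^+-1}$ that converges in $L^{(p^+)'}(\Omega)$; a subsequence-of-subsequences argument then removes the subsequence. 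Your approach is cleaner in that it avoids the paper's explicit term-by-term H\"older estimates and delivers a stronger conclusion, at the cost of invoking the slightly more abstract Vitali/Pratt machinery; the paper's approach is more elementary and self-contained, relying only on the inequality lemma already stated. Both are valid, and your observation that the $(S_+)$-property must precede this lemma is exactly right: without the strong convergence of $\nabla u_n$ in $L^{p^+}$, the majorant sequence would not converge in $L^{(p^+)'}$ and the equi-integrability step would fail.
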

\begin{proof}
Let $\{u_{n}\}\subset W^{1,p^+}_{0}(\Omega)$ be a sequence such that $u_{n}\rightharpoonup u$  in $W^{1,p^+}_{0}(\Omega)$ , and suppose that $$\limsup_{n\rightarrow\infty}\langle I(u_{n}),u_{n}-u\rangle\leq 0.$$ Then, by the $S^{+}$-type property (that is, Lemma \ref{s_+}), it follows that $u_{n} \rightarrow u$ strongly in $W_{0}^{1, p^{+}}(\Omega)$.

Let $v \in W_{0}^{1, p^{+}}(\Omega)$ be arbitrary. We decompose
\begin{align*}
\langle I(u_{n})-I(u), v\rangle=  \sum_{i=1}^{N} \int_{\Omega}\left(\left|\frac{\partial u_{n}}{\partial x_{i}}\right|^{p_{i}\left(u_{n}\right)-2} \frac{\partial u_{n}}{\partial x_{i}}-\left|\frac{\partial u}{\partial x_{i}}\right|^{p_{i}(u)-2} \frac{\partial u}{\partial x_{i}}\right) \frac{\partial v}{\partial x_{i}}\dx \\
 +\epsilon \int_{\Omega}\left(\left|\nabla u_{n}\right|^{p^{+}-2} \nabla u_{n}-|\nabla u|^{p^{+} - 2} \nabla u\right) \nabla v\dx-\int_{\Omega} (f(x,u_{n})-f(x,u)) v\dx.
\end{align*}
We verify the convergence of each term separately.

Claim (a):
$$\sum_{i=1}^{N} \int_{\Omega}\left(\left|\frac{\partial u_{n}}{\partial x_{i}}\right|^{p_{i}\left(u_{n}\right)-2} \frac{\partial u_{n}}{\partial x_{i}}-\left|\frac{\partial u}{\partial x_{i}}\right|^{p_{i}(u)-2} \frac{\partial u}{\partial x_{i}}\right) \frac{\partial v}{\partial x_{i}}\dx =o_n(1), \forall v \in W_{0}^{1, p^{+}}(\Omega).$$
Let $v \in W_{0}^{1, p^{+}}(\Omega)$, we write
\begin{align}\label{p_1.1}
 \sum_{i=1}^{N} \int_{\Omega}&\left(\left|\frac{\partial u_{n}}{\partial x_{i}}\right|^{p_{i}\left(u_{n}\right)-2} \frac{\partial u_{n}}{\partial x_{i}}-\left|\frac{\partial u}{\partial x_{i}}\right|^{p_{i}(u)-2} \frac{\partial u}{\partial x_{i}}\right) \frac{\partial v}{\partial x_{i}}\dx \notag\\
& =\sum_{i=1}^{N} \int_{\Omega}\left(\left|\frac{\partial u_{n}}{\partial x_{i}}\right|^{p_{i}\left(u_{n}\right)-2} \frac{\partial u_{n}}{\partial x_{i}}-\left|\frac{\partial u}{\partial x_{i}}\right|^{p_{i}(u_n)-2} \frac{\partial u}{\partial x_{i}}\right) \frac{\partial v}{\partial x_{i}}\dx\notag\\
& +\sum_{i=1}^{N} \int_{\Omega}\left(\left|\frac{\partial u}{\partial x_{i}}\right|^{p_{i}\left(u_{n}\right)-2} \frac{\partial u}{\partial x_{i}}-\left|\frac{\partial u}{\partial x_{i}}\right|^{p_{i}(u)-2} \frac{\partial u}{\partial x_{i}}\right) \frac{\partial v}{\partial x_{i}}\dx\notag\\
& := (I)+(II).
\end{align}
From H\"older's inequality and Lemma \ref{inequalities}, one gets
\begin{align*}
(I)&=\sum_{i=1}^{N} \int_{\Omega}\left|\left(\left|\frac{\partial u_{n}}{\partial x_{i}}\right|^{p_{i}\left(u_{n}\right)-2} \frac{\partial u_{n}}{\partial x_{i}}-\left|\frac{\partial u}{\partial x_{i}}\right|^{p_{i}(u_n)-2} \frac{\partial u}{\partial x_{i}}\right) \frac{\partial v}{\partial x_{i}}\right|\dx\\
&\leq \sum_{i=1}^{N}\left(\int_{\Omega}\left(\left|\frac{\partial u_{n}}{\partial x_{i}}\right|^{p_{i}\left(u_{n}\right)-2} \frac{\partial u}{\partial x_{i}}-\left|\frac{\partial u}{\partial x_{i}}\right|^{p_{i}(u_n)-2} \frac{\partial u}{\partial x_{i}}\right)^{\frac{p_i^{+}}{p_i^{+}-1}}\dx\right)^{\frac{p_i^{+}-1}{p_i^{+}}}\left\|\frac{\partial v}{\partial x_{i}}\right\|_{L^{p_i^{+}}(\Omega)}\\
& \leq C \sum_{i=1}^{N}\left(\int_{\left| \frac{\partial u_{n}}{\partial x_{i}}\right|+  \left|\frac{\partial u}{\partial x_{i}}\right|> 1}\left(\left| \frac{\partial u_{n}}{\partial x_{i}}\right|+  \left|\frac{\partial u}{\partial x_{i}}\right|\right)^{\frac{(p_i^{+}-2)p_i^{+}}{p_i^{+}-1}}\left| \frac{\partial u_{n}}{\partial x_{i}}- \frac{\partial u}{\partial x_{i}}\right|^{\frac{p_i^{+}}{p_i^{+}-1}}\dx\right)^{\frac{p_i^{+}-1}{p_i^{+}}}\left\|\frac{\partial v}{\partial x_{i}}\right\|_{L^{p_i^{+}}(\Omega)}\\
& + C \sum_{i=1}^{N}\left(\int_{\left| \frac{\partial u_{n}}{\partial x_{i}}\right|+  \left|\frac{\partial u}{\partial x_{i}}\right|\leq 1}\left| \frac{\partial u_{n}}{\partial x_{i}}- \frac{\partial u}{\partial x_{i}}\right|^{\frac{p_i^{+}}{p_i^{+}-1}}\dx\right)^{\frac{p_i^{+}-1}{p_i^{+}}}\left\|\frac{\partial v}{\partial x_{i}}\right\|_{L^{p_i^{+}}(\Omega)}\\
& \leq C \sum_{i=1}^{N}\left(\int_{\Omega}\left(\left| \frac{\partial u_{n}}{\partial x_{i}}\right|+  \left|\frac{\partial u}{\partial x_{i}}\right|\right)^{p_i^{+}}\dx\right)^{\frac{p_i^{+}-2}{p_i^{+}}}\left\|\frac{\partial\left(u_{n}-u\right)}{\partial x_{i}}\right\|_{L^{p_i^{+}}(\Omega)}\left\|\frac{\partial v}{\partial x_{i}}\right\|_{L^{p_i^{+}}(\Omega)}\dx\\
& +C\sum_{i=1}^{N}\left\|\frac{\partial\left(u_{n}-u\right)}{\partial x_{i}}\right\|_{L^{^{\frac{p_i^{+}-1}{p_i^{+}}}}(\Omega)}\left\|\frac{\partial v}{\partial x_{i}}\right\|_{L^{p_i^{+}}(\Omega)}
\end{align*} 
since, $u_{n} \rightarrow u$ strongly in $W_{0}^{1, p^{+}}(\Omega)$, it follows that each term in the above sum tends to zero. Therefore, we conclude that
\begin{equation}\label{p1.2}
 (I)\rightarrow 0, \text{ as } n\rightarrow\infty.   
\end{equation}
By the H\"older's inequality, we have
\begin{align}\label{p1.3}
\sum_{i=1}^{N} &\left(\left|\frac{\partial u}{\partial x_{i}}\right|^{p_{i}\left(u_{n}\right)-2} \frac{\partial u}{\partial x_{i}}-\left|\frac{\partial u}{\partial x_{i}}\right|^{p_{i}(u)-2} \frac{\partial u}{\partial x_{i}}\right) \frac{\partial v}{\partial x_{i}}\notag\\
&\leq 2\sum_{i=1}^{N} \left(\left|\frac{\partial u}{\partial x_{i}}\right|^{p_{i}^{+}-1} \frac{\partial v}{\partial x_{i}}+\left|\frac{\partial u}{\partial x_{i}}\right|^{p_{i}^{-}-1} \frac{\partial v}{\partial x_{i}}\right)\in 
L^{1}(\Omega)
\end{align}
as $\frac{\partial u}{\partial x_{i}},\frac{\partial v}{\partial x_{i}}\in 
L^{p_{i}^{+}}(\Omega)$ and $\frac{\partial u}{\partial x_{i}},\frac{\partial v}{\partial x_{i}}\in 
L^{p_{i}^{-}}(\Omega)$.

By using Dominated Convergence Theorem, \eqref{p1.3} and the fact that $p(u_n) \rightarrow p(u)$, we have
\begin{equation}\label{p1.4}
 (II)\rightarrow 0, \text{ as } n\rightarrow\infty.   
\end{equation}
The proof of the claim (a) follow by \eqref{p_1.1}, \eqref{p1.2} and \eqref{p1.4}.

Claim (b):
$$
\int_{\Omega}\left(\left|\nabla u_{n}\right|^{p^{+}-2} \nabla u_{n}-|\nabla u|^{p^{+} - 2} \nabla u\right) \nabla v\dx=o_n(1).
$$
From H\"older's inequality and Lemma \ref{inequalities}, we obtain
\begin{align*}
& \int_{\Omega}\left(\left|\nabla u_{n}\right|^{p^{+}-2} \nabla u_{n}-|\nabla u|^{p^{+} - 2} \nabla u\right) \nabla v\dx \\
& \leq C\left(\int_{\Omega}\left(\left|\nabla u_{n}\right|^{p^{+}-2} \nabla u_{n}-|\nabla u|^{p^{+}-2} \nabla u\right)^{\frac{p^{+}}{p^{+}-1}}\dx\right)^{\frac{p^{+}-1}{p^{+}}}  \|\nabla v\|_{L^{p^{+}}(\Omega)}  \\
& \leq C\left(\int_{\Omega}(| \nabla u_{n}|+|\nabla u|) ^{\frac{p^{+}(p^{+}-2)}{p^{+}-1}}\left|\nabla u_{n}-\nabla u\right|^{\frac{p^{+}}{p^{+}-1}}\dx\right)^{\frac{p^{+}-1}{p^{+}}} \|\nabla v\|_{L^{p^{+}}(\Omega)} \\
& \leq C\left(\int_{\Omega}(| \nabla u_{n}|+|\nabla u|) ^{p^{+}}\dx\right)^{\frac{p^{+}-2}{p^{+}}} \|\nabla (u_{n}-u)\|_{L^{p^{+}}(\Omega)}\|\nabla v\|_{L^{p^{+}}(\Omega)} \\
& \rightarrow 0
\end{align*}
as $u_{n} \rightarrow u$ in $W_{0}^{1, p^{+}}(\Omega)$, which proves the claim (b).

Claim (c):
$$
\int_{\Omega} (f(x,u_{n})-f(x,u)) v\dx=o_n(1).
$$
By H\"older's inequality, we estimate
\begin{align}\label{p1.5}
 \int_{\Omega} f(x,u_{n})v\dx&\leq C\int_{\Omega}(v+|u_{n}|^{r-1}v)\dx\notag\\
 &\leq C\|v\|_{L^{1}(\Omega)}+C\|u_{n}\|^{r-1}_{L^{r}(\Omega)}\|v\|_{L^{r}(\Omega)}<\infty.
\end{align}
Since $f$ is continuous, and $u_{n}\rightarrow u$ almost everywhere in $\Omega$, it follows from the Dominated Convergence Theorem, together with estimate \eqref{p1.5}, that
$$
\lim _{n \rightarrow \infty}\int_{\Omega} f(x,u_{n}) v\dx=\int_{\Omega} f(x,u) v\dx,
$$
which establishes claim (c).

From claim (a), (b) and (c), we have
\begin{align}\label{p1.6}
\left\langle I\left(u_{n}\right), v\right\rangle \to \langle I(u), v\rangle, \ \forall \ v \in W_{0}^{1, p^{+}}(\Omega).
\end{align}
It remains to prove
$$
\left\langle I\left(u_{n}\right), u_{n}\right\rangle \rightarrow\langle I(u), u\rangle .
$$
From \eqref{p1.6}, we have
\begin{align*}
\left\langle I\left(u_{n}\right), u_{n}\right\rangle-\langle I(u), u\rangle&= \left\langle I\left(u_{n}\right), u_{n}\right\rangle-\left\langle I\left(u_{n}\right), u\right\rangle\\
&+
\left\langle I\left(u_{n}\right), u\right\rangle-\langle I(u), u\rangle \\
&= \left\langle I\left(u_{n}\right), u_{n}-u\right\rangle+\left\langle I\left(u_{n}\right)-I(u), u\right\rangle \\
&=  \left\langle I\left(u_{n}\right), u_{n}-u\right\rangle+o_{n}(1) .
\end{align*}
It is given that
$$
\limsup_{n \rightarrow \infty}\left\langle I(u_{n}), u_{n}-u\right\rangle \leq 0.
$$
 On the other hand, as established in Lemma~\ref{s_+}, we have 
 $$
\limsup_{n \rightarrow \infty}\left\langle I(u_{n}), u_{n}-u\right\rangle \geq 0.
$$ Combining these two inequalities, we deduce that
$$\lim_{n \rightarrow \infty}\left\langle I(u_{n}), u_{n}-u\right\rangle = 0$$
which completes the proof of the pseudo-monotonicity of the operator 
$I$.   
\end{proof}

 \textit{Proof of the Theorem \ref{per_theorem}}. By  Lemmas \ref{corecive}-\ref{pseudomonotone}, all the  conditions of Theorem \ref{the1}  are satisfied for the operator $I$. Hence, for a given $\epsilon>0$, there exists $u_{\epsilon}\in W^{1,p^+}_{0}(\Omega)$ such that
 $\langle I(u_{\epsilon}),v\rangle=0$ for all $v\in W^{1,p^+}_{0}(\Omega)$. Also, by using the fact that $f(\cdot,0)<0$ implies $u_{\epsilon}\neq 0$. \qed
 
\subsection{Passage to the Limit}\label{subsec3.2}
Take $\epsilon=\frac{1}{n}$ in \eqref{1.2}, by Theorem \ref{per_theorem} there exists $u_{n}\in W^{1,p^+}_{0}(\Omega)$ such that 
\begin{equation}\label{weak_sn_pr}
\sum_{i=1}^{N} \int_{\Omega}\left|\frac{\partial u_n}{\partial  x_i} \right|^{p_i(u_n)-2}\frac{\partial u_n}{\partial  x_i}\frac{\partial v}{\partial  x_i}\dx+\frac{1}{n}\int_{\Omega}|\nabla u_n|^{p^{+}-2} \nabla u_n \nabla v \dx=\int_{\Omega}f(x,u_n)v\dx,
\end{equation}
for all $ v\in W^{1,p^+}_{0}(\Omega)$.

Now, we are ready to prove the Theorem \ref{t1}. 

\textit{Proof of the Theorem \ref{t1}}. 
We divide the proof into several steps.
\begin{itemize}
\item \textbf{STEP 1:} $\{u_n\}$ is bounded in $W^{1,p^-}_{0}(\Omega)$.

Taking $v=u_n$ in \eqref{weak_sn_pr}, we have
\begin{equation}\label{weak_sn_pr1}
\sum_{i=1}^{N} \int_{\Omega}\left|\frac{\partial u_n}{\partial  x_i} \right|^{p_i(u_n)}\dx+\frac{1}{n}\int_{\Omega}|\nabla u_n|^{p^{+}} \dx =\int_{\Omega}f(x,u_n)u_n\dx. 
\end{equation}
Using assumption $(f)$ on the nonlinearity, along with the Sobolev embedding theorem and  H\"older's inequality, we estimate the right-hand side
\begin{align}\label{st1.1}
\int_{\Omega}f(x,u_n)u_n\dx &\leq C  \int_{\Omega} (|u_n|+|u_n|^{r})\dx\notag\\
& \leq C \int_{\Omega} (1+|u_n|^{r})\dx\notag\\
& \leq C |\Omega|+C\left(\int_{\Omega}|\nabla u_n|^{p^-}\dx\right)^{\frac{r}{p^-}},
\end{align}
since $r<p^-$.
Moreover, we estimate
\begin{align}\label{st1.2}
\left(\int_{\Omega}|\nabla u_n|^{p^-}\dx\right)^{\frac{1}{p^-}}&\leq C\sum_{i=1}^{N}\left\|\frac{\partial u_n}{\partial x_{i}}\right\|_{L^{p^{-}}(\Omega)}\notag\\
&\leq C\sum_{i=1}^{N}\left(\int_{\left|\frac{\partial u_n}{\partial x_{i}}\right|\geq 1} \left|\frac{\partial u_n}{\partial x_{i}}\right|^{p^-}\dx+\int_{\left|\frac{\partial u_n}{\partial x_{i}}\right|< 1} \left|\frac{\partial u_n}{\partial x_{i}}\right|^{p^-}\dx\right)^{\frac{1}{p^-}}\notag\\
 &\leq C\sum_{i=1}^{N}\left(\int_{\left|\frac{\partial u_n}{\partial x_{i}}\right|\geq 1} \left|\frac{\partial u_n}{\partial x_{i}}\right|^{p_i(u_n)}\dx+\int_{\left|\frac{\partial u_n}{\partial x_{i}}\right|< 1} \dx \right)^{\frac{1}{p^-}}\notag\\
 &\leq C\sum_{i=1}^{N}\left(\int_{\Omega} \left|\frac{\partial u_n}{\partial x_{i}}\right|^{p_i(u_n)}\dx\right)^{\frac{1}{p^-}}+C.
\end{align}
Substituting estimates~\eqref{st1.1} and~\eqref{st1.2} into~\eqref{weak_sn_pr1}, we obtain
$$\sum_{i=1}^{N} \int_{\Omega}\left|\frac{\partial u_n}{\partial  x_i} \right|^{p_i(u_n)}\dx\leq C+C \sum_{i=1}^{N}\left( \int_{\Omega} \left|\frac{\partial u_n}{\partial x_{i}}\right|^{p_i(u_n)}\dx\right)^{\frac{r}{p^-}}. $$
 Since $r<p^-$, a standard Young-type inequality yields
\begin{equation}\label{s1.3}
\sum_{i=1}^{N}\int_{\Omega}\left|\frac{\partial u_n}{\partial  x_i} \right|^{p_i(u_n)}\dx< C
\end{equation}
and from~\eqref{weak_sn_pr1}, we also get
\begin{equation}\label{s1.4}
\frac{1}{n}\int_{\Omega}|\nabla u_n|^{p^{+}}\dx  < C.
\end{equation}
Therefore, using~\eqref{st1.2} and~\eqref{s1.3}, we conclude that $\{u_n\}$ is bounded in $W^{1,p^-}_{0}(\Omega)$.
As \( W^{1,p^-}_0(\Omega) \) is reflexive,  there exists \( u \in W^{1,p^-}_0(\Omega) \) such that
\[
u_n \rightharpoonup u \quad \text{weakly in } W^{1,p^-}_0(\Omega), \quad u_n(x) \to u(x) \quad \text{a.e. in } \Omega,
\]
and consequently,
\[
p_i(u_n(x)) \to p_i(u(x)) \quad \text{a.e. in } \Omega, \quad \text{for all } i = 1, \dots, N.
\]
%As $W^{1,p^-}_{0}(\Omega)$ is a reflexive space, there exists $u\in W^{1,p^-}_{0}(\Omega)$ such that  up to a subsequence, we have $u_{n}\rightharpoonup u$ weakly in $W^{1,p^-}_{0}(\Omega)$, $u_{n}(x)\rightarrow u(x)$ a.e. in $\Omega$ and $p_i(u_{n}(x))\rightarrow p_i(u(x))$ for each $i=1,2,\ldots,N$.
\item \textbf{STEP 2:} $u\in W^{1,\vec{\textbf{p}}(u)}_{0}(\Omega)$, that is,  $\sum_{i=1}^{N}\left\| \frac{\partial u}{\partial  x_i}\right\|_{ L^{p_i(u)}(\Omega)}<\infty$.

Define $p_{i,n}(x)=p_{i}(u_n(x))$ for all $i=1,\cdots,N$. Applying Lemma~\ref{l1} with this choice and using \eqref{s1.3}, we obtain
$$\sum_{i=1}^{N}\int_{\Omega}\left|\frac{\partial u}{\partial  x_i} \right|^{p_{i}(u)}\dx\leq \liminf_{n\rightarrow\infty}\sum_{i=1}^{N}\int_{\Omega}\left|\frac{\partial u_n}{\partial  x_i} \right|^{p_{i}(u_n)}\dx<C$$
which implies that $u\in W^{1,\vec{\textbf{p}}(u)}_{0}(\Omega)$ by Lemma \ref{rel}. 

\item \textbf{STEP 3:} $u$ satisfies the following inequality:
\begin{equation}\label{st3.1}
\int_{\Omega}f(x,u)(u-v) \dx\geq\sum_{i=1}^{N} \int_{\Omega}\left|\frac{\partial v}{\partial  x_i} \right|^{p_i(u)-2}\frac{\partial v}{\partial  x_i}\frac{\partial (u-v)}{\partial  x_i}\dx,
\ \forall v\in  W^{1,\vec{\textbf{p}}(u)}_{0}(\Omega).
\end{equation}
Let $v\in C_{c}^{\infty}(\Omega)$. Using Lemma~\ref{inequalities}, we obtain the following inequality
\begin{align}\label{st2.1}
\sum_{i=1}^{N} \int_{\Omega}\left|\frac{\partial u_n}{\partial  x_i} \right|^{p_i(u_n)-2}&\frac{\partial u_n}{\partial  x_i}\frac{\partial (u_n-v)}{\partial  x_i}\dx+\frac{1}{n}\int_{\Omega}|\nabla u_n|^{p^{+}-2} \nabla u_n \nabla (u_n-v) \dx\notag\\
&\geq \sum_{i=1}^{N} \int_{\Omega}\left|\frac{\partial v}{\partial  x_i} \right|^{p_i(u_n)-2}\frac{\partial v}{\partial  x_i}\frac{\partial (u_n-v)}{\partial  x_i}\dx+\frac{1}{n}\int_{\Omega}|\nabla v|^{p^{+}-2} \nabla v\nabla (u_n-v)\dx.
\end{align}
Using the weak formulation \eqref{weak_sn_pr} and inequality \eqref{st2.1}, we arrive at
\begin{align}\label{st3.4}
\int_{\Omega}f(x,u_n)(u_n-v)\dx\geq &\sum_{i=1}^{N} \int_{\Omega}\left|\frac{\partial v}{\partial  x_i} \right|^{p_i(u_n)-2}\frac{\partial v}{\partial  x_i}\frac{\partial (u_n-v)}{\partial  x_i}\dx+\frac{1}{n}\int_{\Omega}|\nabla v|^{p^{+}-2} \nabla v\nabla (u_n-v)\dx\notag\\
& =I_1+I_2
\end{align}
where
\begin{align*}
I_1 &= \sum_{i=1}^{N} \int_{\Omega} \left| \frac{\partial v}{\partial x_i} \right|^{p_i(u_n)-2} \frac{\partial v}{\partial x_i} \frac{\partial (u_n - v)}{\partial x_i} \dx \\
I_2 &= \frac{1}{n} \int_{\Omega} |\nabla v|^{p^+-2} \nabla v \cdot \nabla (u_n - v) \dx.
\end{align*}
We decompose \( I_1 \) as
\begin{align*}
 I_1=\sum_{i=1}^{N} \int_{\Omega}&\left(\left|\frac{\partial v}{\partial  x_i} \right|^{p_i(u_n)-2}\frac{\partial v}{\partial  x_i}-\left|\frac{\partial v}{\partial  x_i} \right|^{p_i(u)-2}\frac{\partial v}{\partial  x_i}\right)\frac{\partial (u_n-v)}{\partial  x_i}\dx\\
 &+\sum_{i=1}^{N} \int_{\Omega}\left|\frac{\partial v}{\partial  x_i} \right|^{p_i(u)-2}\frac{\partial v}{\partial  x_i}\frac{\partial (u_n-v)}{\partial  x_i}\dx \\
:= &I_1^{'}+I_1^{''}.
\end{align*}
By H\"older's inequality and the Dominated Convergence Theorem (since \( v \in C_c^{\infty}(\Omega) \)), we obtain
\begin{align}\label{I_1^{'}}
 I_1^{'}&\leq \sum_{i=1}^{N} \left(\int_{\Omega}\left(\left|\frac{\partial v}{\partial  x_i} \right|^{p_i(u_n)-2}\frac{\partial v}{\partial  x_i}-\left|\frac{\partial v}{\partial  x_i} \right|^{p_i(u)-2}\frac{\partial v}{\partial  x_i}\right)^{\frac{p^{-}}{p^{-}-1}} \dx\right)^{\frac{p^{-}-1}{p^{-}}}\left\|\frac{\partial (u_n-v)}{\partial x_{i}}\right\|_{L^{p^{-}}(\Omega)}\notag \\
&\leq c \sum_{i=1}^{N} \left(\int_{\Omega}\left(\left|\frac{\partial v}{\partial  x_i} \right|^{p_i(u_n)-2}\frac{\partial v}{\partial  x_i}-\left|\frac{\partial v}{\partial  x_i} \right|^{p_i(u)-2}\frac{\partial v}{\partial  x_i}\right)^{\frac{p^{-}}{p^{-}-1}}\dx \right)^{\frac{p^{-}-1}{p^{-}}}\notag \\
& \rightarrow 0, \text{ as } n \rightarrow \infty.
\end{align}
Similarly,
\begin{equation}\label{I_1^{''}}
I_1^{''}\rightarrow \sum_{i=1}^{N} \int_{\Omega}\left|\frac{\partial v}{\partial  x_i} \right|^{p_i(u)-2}\frac{\partial v}{\partial  x_i}\frac{\partial (u-v)}{\partial  x_i}\dx.
\end{equation}
By using \eqref{I_1^{'}} and \eqref{I_1^{''}}, we have
\begin{equation}\label{I_1}
I_1  \rightarrow \sum_{i=1}^{N} \int_{\Omega}\left|\frac{\partial v}{\partial  x_i} \right|^{p_i(u)-2}\frac{\partial v}{\partial  x_i}\frac{\partial (u-v)}{\partial  x_i}\dx, \text{ as } n \rightarrow \infty.  
\end{equation}
For \( I_2 \), using \eqref{s1.4} and H\"older's inequality
\begin{align*}
|I_2|&\leq\frac{1}{n}\int_{\Omega}|\nabla v|^{p^{+}-1} |\nabla (u_n-v)|\dx\\
& \leq\frac{1}{n} \left(\int_{\Omega}|\nabla v|^{p^{+}}\dx\right)^{\frac{p^{+}-1}{p^{+}}}\left(\int_{\Omega}|\nabla (u_n-v)|^{p^{+}}\dx\right)^{\frac{1}{p^{+}}}\\
& \leq\frac{1}{n} \left(\int_{\Omega}|\nabla v|^{p^{+}}\dx\right)+\frac{C}{n}\left(\int_{\Omega}|\nabla (u_n)|^{p^{+}}\dx\right)^{\frac{1}{p^{+}}}\\
& \leq\frac{1}{n} C+\left(\frac{1}{n}\right)^{\frac{p^{+}-1}{p^{+}}}C
\end{align*}
which implies 
\begin{equation}\label{I_2}
I_2  \rightarrow 0, \text{ as } n \rightarrow \infty.  
\end{equation}
Next, consider the decomposition
$$\int_{\Omega}f(x,u_n)(u_n-v)\dx=\int_{\Omega}f(x,u_n)(u_n-u)\dx+\int_{\Omega}f(x,u_n)(u-v)\dx.$$
In view of condition $(f)$, and by applying H\"older's inequality along with the Sobolev embedding theorem, we derive the following
$$
\lim_{n \rightarrow \infty}\int_{\Omega} f(x,u_{n})(u_{n}-u)\dx=0.
$$
Additionally, as established in Claim (c) of Lemma~\ref{pseudomonotone}, we have
$$
\lim _{n \rightarrow \infty}\int_{\Omega} f(x,u_{n}) (u-v)\dx=\int_{\Omega} f(x,u) (u-v)\dx.
$$
Therefore, we have
\begin{equation}\label{st3.5}
\lim_{n \rightarrow \infty}\int_{\Omega} f(x,u_{n})(u_{n}-v)\dx=\int_{\Omega} f(x,u)(u-v)\dx.
\end{equation}
After taking limit in \eqref{st3.4}, using \eqref{I_1}, \eqref{I_2} and \eqref{st3.5}, we have
\begin{equation*}
\int_{\Omega}f(x,u)(u-v) \dx\geq\sum_{i=1}^{N} \int_{\Omega}\left|\frac{\partial v}{\partial  x_i} \right|^{p_i(u)-2}\frac{\partial v}{\partial  x_i}\frac{\partial (u-v)}{\partial  x_i}\dx,
\ \forall v\in  C_{c}^{\infty}(\Omega).
\end{equation*}
By the density of \( C_c^{\infty}(\Omega) \) in \( W^{1,\vec{\textbf{p}}(u)}_{0}(\Omega) \) (Theorem~\ref{denseness_1}), inequality \eqref{st3.1} holds for all \( v \in W^{1,\vec{\textbf{p}}(u)}_{0}(\Omega) \).
\item \textbf{STEP 4:} Conclusion

Let \( z \in W^{1,\vec{\textbf{p}}(u)}_{0}(\Omega) \) and \( \delta > 0 \). Substituting \( v = u \mp \delta z \) into inequality \eqref{st3.1}, we obtain
\begin{equation*}
\pm\delta\int_{\Omega}f(x,u)z \dx\geq\pm\delta\sum_{i=1}^{N} \int_{\Omega}\left|\frac{\partial u}{\partial  x_i} \mp\delta\frac{\partial z}{\partial  x_i}\right|^{p_i(u)-2}\left(\frac{\partial u}{\partial  x_i} \mp\delta\frac{\partial z}{\partial  x_i}\right)\frac{\partial z}{\partial  x_i}\dx.
\end{equation*}
Dividing by \( \delta \) and letting \( \delta \to 0 \), we conclude
$$
\int_{\Omega}f(x,u)z\dx=\sum_{i=1}^{N} \int_{\Omega}\left|\frac{\partial u}{\partial  x_i} \right|^{p_i(u)-2}\frac{\partial u}{\partial  x_i}\frac{\partial z}{\partial  x_i}\dx.
$$
Since \( z \in W^{1,\vec{\textbf{p}}(u)}_0(\Omega) \) is arbitrary, this implies that \( u \) satisfies the weak formulation of problem~\eqref{1.1}. Therefore, \( u \) is a weak solution.
\end{itemize}\qed
\section{Parabolic Problem}\label{sec4}
This Section focuses on the parabolic problem \eqref{p1.1} and prove the existence of weak solution by employing a combination of time discretization, approximation arguments, and Schauder’s fixed point theorem.

As we have already discussed, for a given~$u$, the exponent $\vec{\textbf{p}}(b(u))$ is a vector in~$\mathbb{R}^N$. Hence, the suitable space for analyzing equation~\eqref{p1.1} is the anisotropic Sobolev space.
Let $u:\Omega\rightarrow\R$ is a continuous function then define the space $E_u$ as 
\begin{equation*}
E_u = \left\{
\begin{aligned}
v \in L^{\infty}(0,T;L^{2}(\Omega)) \ :\ & \int_{\Omega_T} \left| \frac{\partial v}{\partial x_i} \right|^{p_i(b(u))} \, dx < \infty, \quad \forall i = 1,2,\ldots,N,\\ 
& v(\cdot,t) \in V_t(\Omega) \ \text{a.e. } t \in (0,T)
\end{aligned}
\right\}.
\end{equation*}
with 
$$V_t(\Omega)=\left\lbrace v\in L^{2}(\Omega)\cap W^{1,p^-}_{0}(\Omega): \int_{\Omega}\left| \frac{\partial v}{\partial  x_i} \right|^{p_i(b(u(\cdot,t)))}   \dx<\infty, \ \forall i=1,2,\ldots,N  \right\rbrace.$$
%which is a norm space with the norm
%$$\|v\|_{W^{1,\vec{\textbf{p}}(u)}(\Omega)}=\|v\|_{L^{p^+}(\Omega)}+\sum_{i=1}^{N}\left\| \frac{\partial v}{\partial  x_i}\right\|_{ L^{p_i(u)}(\Omega)}.$$
Next, consider the closure of $C_c^\infty(\Omega)$ with respect to the space  $W^{1,\vec{\textbf{p}}(b(u))}(\Omega)$ and denote it as $W^{1,\vec{\textbf{p}}(b(u))}_{0}(\Omega)$, i.e., $W^{1,\vec{\textbf{p}}(b(u))}_{0}(\Omega)=\overline{C_c^\infty(\Omega)}|^{\|\cdot\|_{W^{1,\vec{\textbf{p}}(b(u))}(\Omega)}}$ which is a norm space with the norm 
$$\|v\|_{W^{1,\vec{\textbf{p}}(b(u))}_{0}(\Omega)}=\sum_{i=1}^{N}\left\| \frac{\partial v}{\partial  x_i}\right\|_{ L^{p_i(b(u))}(\Omega)}\cdot $$
%Define the space
%$$W=W_0^{1,\vec{\textbf{p}}(b(u))}(\Omega)\cap L^{2}(\Omega).$$
We define a weak solution to \eqref{p1.1}. 
\begin{definition}\label{para_weak_sol_def}
 A function $u\in E_u\cap C([0,T];L^2(\Omega))$ said to be a weak solution of problem \eqref{p1.1} if the following identity holds
$$-\int_{\Omega}u_{0}(x)v(x,0)\dx-\int_{0}^{T}\int_{\Omega}u v_t\dxt+\sum_{i=1}^{N} \int_{0}^{T}\int_{\Omega}\left|\frac{\partial u}{\partial  x_i} \right|^{p_i(b(u))-2}\frac{\partial u}{\partial  x_i}\frac{\partial v}{\partial  x_i}\dxt  =\int_{0}^{T}\int_{\Omega}fv \dxt
$$
for all test functions $v\in C^{1}(\overline{\Omega}_T)$ satisfying $v(\cdot,T)=0$.
\end{definition}
\subsection{Time-discrete problem}
Let $N_0$ be a fixed positive integer, and define the time step size by $h = T/N_0$. We first consider the following time-discretized version of problem \eqref{p1.1}
\begin{equation}\label{elli_pro_k}
\begin{cases}
\dfrac{u_k - u_{k-1}}{h} -\Delta_{\vec{\textbf{p}}(b(u_k))} u_k = [f]_h\bigl((k-1)h\bigr), & x \in \Omega,\\
u_k|_{\partial \Omega} = 0, & k = 1,2,\dots,N_0,
\end{cases}
\end{equation}
where the Steklov average $[f]_h$ of $f$ is defined as
\[
[f]_h(x,t) = \frac{1}{h} \int_t^{t+h} f(x,\tau)\dtau.
\]

It is easy to verify that for each fixed $t$, the function $[f]_h(\cdot) \in L^{(p^{-})'}(\Omega).$
\begin{theorem}
Let the condition $(p_{1})$ be satisfied, and suppose that $f\in W^{-1,(p^{-})'}(\Omega)$. Let $b:W^{1,p^-}_{0}(\Omega)\rightarrow \R$ such that $b$ is continuous and bounded. Then, the problem \eqref{elli_pro_k} admits at least one non-trivial weak solution.
\end{theorem}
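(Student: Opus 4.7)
The plan is to implement the road map sketched in the introduction: regularize with a $p^+$-Laplacian, freeze the nonlocal exponent, solve the frozen problem by classical monotone operator theory, produce a fixed point via Schauder's theorem, and finally send the regularization to zero. Fix $k$, set $g := u_{k-1} + h\,[f]_h\bigl((k-1)h\bigr)$, and seek $u_k^{\epsilon} \in W^{1,p^+}_0(\Omega)$ solving the perturbed problem \eqref{apprx_pro},
\begin{equation*}
u_k^{\epsilon} - h\,\Delta_{\vec{\textbf{p}}(b(u_k^{\epsilon}))} u_k^{\epsilon} - \epsilon h\,\Delta_{p^+} u_k^{\epsilon} = g \ \text{in } \Omega, \quad u_k^{\epsilon} = 0 \text{ on } \partial\Omega.
\end{equation*}
Since $p_i(b(w)) \in [p^-,p^+]$ for every $w$, the $p^+$-term dominates and provides uniform coercivity throughout both the freezing and fixed-point steps.

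For the freezing step, given $w \in W^{1,p^-}_0(\Omega)$, I consider problem \eqref{apprx_pro_0} in which $b(u_k^{\epsilon})$ is replaced by $b(w)$; the associated operator $J_w : W^{1,p^+}_0(\Omega) \to (W^{1,p^+}_0(\Omega))^*$ is strictly monotone by Lemma \ref{inequalities}, bounded, hemicontinuous, and coercive uniformly in $w$, so the Browder--Minty theorem yields a unique solution $T(w) \in W^{1,p^+}_0(\Omega)$. Testing $J_w(T(w)) = g$ by $T(w)$ gives a uniform a priori bound $\|T(w)\|_{W^{1,p^+}_0(\Omega)} \le R$, so $T$ maps the closed ball $B_R \subset W^{1,p^+}_0(\Omega)$ into itself. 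Since $(p_1)$ forces $p^+ > N$, the embedding $W^{1,p^+}_0(\Omega) \hookrightarrow\hookrightarrow C(\overline{\Omega})$ is compact, which together with the continuity of $b$ on $W^{1,p^-}_0(\Omega)$ and a Minty-type argument on the frozen equation (paralleling Lemma \ref{s_+}) establishes the continuity of $T$ in the topology required by the fixed point theorem. Schauder's theorem then produces $u_k^{\epsilon} \in B_R$ solving \eqref{apprx_pro}.

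The final step is to send $\epsilon \to 0$. Testing \eqref{apprx_pro} by $u_k^{\epsilon}$ yields the $\epsilon$-independent estimate
\begin{equation*}
\|u_k^{\epsilon}\|_{L^2(\Omega)}^2 + h\sum_{i=1}^N \int_\Omega \left|\frac{\partial u_k^{\epsilon}}{\partial x_i}\right|^{p_i(b(u_k^{\epsilon}))}\dx + \epsilon h\,\|\nabla u_k^{\epsilon}\|_{L^{p^+}(\Omega)}^{p^+} \le C,
\end{equation*}
so, arguing as in Step 1 of the proof of Theorem \ref{t1}, $\{u_k^{\epsilon}\}$ is bounded in $W^{1,p^-}_0(\Omega)$; a subsequence converges weakly there and strongly in $C(\overline{\Omega})$, yielding $b(u_k^{\epsilon}) \to b(u_k)$ and $p_i(b(u_k^{\epsilon})) \to p_i(b(u_k))$. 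Lemma \ref{l1} then places $u_k$ in $W^{1,\vec{\textbf{p}}(b(u_k))}_0(\Omega)$, and the Minty-type argument used in Steps 3--4 of Theorem \ref{t1} identifies the weak limit of the diffusion term, while the $\epsilon$-regularization vanishes because $\epsilon^{1/p^+}\|\nabla u_k^{\epsilon}\|_{L^{p^+}(\Omega)} \to 0$. The main obstacle is this last passage: the target space $W^{1,\vec{\textbf{p}}(b(u_k))}_0(\Omega)$ depends on the unknown limit, so the nonlinear diffusion term cannot be identified by weak compactness alone and one must combine the monotonicity trick with Lemma \ref{l1} to close the argument, exactly as in the elliptic case. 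Non-triviality is then inherited from the presence of the initial-datum term $u_{k-1}$ (and iteratively from $u_0 \not\equiv 0$ in the full scheme).
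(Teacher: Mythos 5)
Your road map (regularize by the $p^+$-Laplacian, freeze $b(u)$, solve the frozen problem by classical monotone operator theory, apply Schauder, then send $\epsilon\to 0$) is the same as the paper's; the final limit passage is likewise identical, using the a priori estimates, Lemma~\ref{l1}, and the Minty argument from Theorem~\ref{t1}. The one genuine gap is in the Schauder step. You write that $T$ maps the closed ball $B_R\subset W^{1,p^+}_0(\Omega)$ into itself and then invoke Schauder, but $B_R$ is not compact in the $W^{1,p^+}_0$-norm topology, so Schauder requires $T$ to be a compact map; the a priori bound gives only $T(B_R)\subset B_R$, not precompactness of the image. The paper avoids this by choosing $L^2(\Omega)$ as the ambient space: the map $J$ there has image uniformly bounded in $W^{1,p^+}_0(\Omega)$, which is compactly embedded in $L^2(\Omega)$, so compactness of $J$ is automatic and only continuity in $L^2$ remains to be checked. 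To close your version you should either apply Schauder in the $L^2$ (or $C(\overline{\Omega})$) topology — where the closed $W^{1,p^+}_0$-ball is in fact compact, being norm-bounded in a space compactly and reflexively embedded — or give a separate proof that $T$ is a compact map when $B_R$ carries the $W^{1,p^+}_0$ norm. At present the compact embedding $W^{1,p^+}_0(\Omega)\hookrightarrow\hookrightarrow C(\overline{\Omega})$ appears in your argument only in service of continuity of $T$, not of the compactness that Schauder actually demands.

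A secondary remark worth making explicit: continuity of $T$ hinges on $b(w_n)\to b(w_0)$ for the sequence you extract, yet $b$ is assumed continuous only on $W^{1,p^-}_0(\Omega)$, and none of the convergences available to you — $L^2$, $C(\overline{\Omega})$, or weak $W^{1,p^+}_0$ — implies strong $W^{1,p^-}_0$ convergence (consider the prototype $b(u)=\|\nabla u\|_{L^{p^-}}$, for which uniform convergence of $w_n$ says nothing about convergence of the gradients). The paper's proof passes from $L^2$-convergence of $w_n$ to $p_i(b(w_n))\to p_i(b(w_0))$ without further comment, so the issue is shared rather than peculiar to your write-up, but it should be acknowledged if the full generality of the hypothesis on $b$ is intended.
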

\begin{proof}
To establish the result, we structure the proof in two key steps.

\textbf{Step 1: Approximation} 

For the first time step $k=1$, the corresponding perturbed problem reads
\begin{equation}\label{apprx_pro}
\begin{cases}
\dfrac{u - u_{0}}{h} -\Delta_{\vec{\textbf{p}}(b(u))} u-\epsilon \Delta_{p^+} u = [f]_h(0), & x \in \Omega,\\[10pt]
u|_{\partial \Omega} = 0. 
\end{cases}
\end{equation}
Let $w\in L^2({\Omega})$ be fixed. For each $\epsilon>0$,  we consider the following problem where exponent is not dependent on the unknown function
\begin{equation}
\begin{cases}
\dfrac{u - u_{0}}{h} -\Delta_{\vec{\textbf{p}}(b(w))} u-\epsilon \Delta_{p^+} u = [f]_h(0), & x \in \Omega,\\[10pt]
u|_{\partial \Omega} = 0. 
\end{cases}
\label{apprx_pro_0}
\end{equation}
By employing the theory of monotone operators, for a given $w\in L^2({\Omega})$, there exists a unique solution, say, $u_w\in W^{1,p^+}_{0}(\Omega)$ to the problem \eqref{apprx_pro_0} such that
$$\int_{\Omega}\dfrac{u_w-u_0}{h}v\dx+\sum_{i=1}^{N} \int_{\Omega}\left|\frac{\partial u_w}{\partial  x_i} \right|^{p_i(b(w))-2}\frac{\partial u_w}{\partial  x_i}\frac{\partial v}{\partial  x_i}\dx+\epsilon\int_{\Omega}|\nabla u_w|^{p^{+}-2} \nabla u_w \nabla v \dx =\int_{\Omega}[f]_h(0)v\dx, $$
for all $ v\in W^{1,p^+}_{0}(\Omega).$
Choosing $v= u_w$, in the above equation yields the following energy estimate
\begin{align*}
\frac{1}{2h} \int_\Omega u_w^2 \dx
+ \sum_{i=1}^{N} \int_\Omega \left| \frac{\partial u_w}{\partial x_i} \right|^{p_i(b(u_w))} \dx
+ \epsilon \int_\Omega |\nabla u_w|^{p^+}\dx \leq \frac{1}{2h} \int_\Omega u_0^2 \, \dx + C\|\nabla u_w\|_{L^{p^+}(\Omega)}\dx.
\end{align*} 
Thus, the standard Young-type inequality yields the uniform bound
$$\|u_w\|_{L^{2}{(\Omega)}}+\|u_w\|_{W^{1,p^+}_{0}(\Omega)}\leq C_1,$$
where $C_1>0$ is a constant independent of the choice of $w$. 
Hence, we have
$$\|u_w\|_{L^{2}{(\Omega)}}\leq C_1.$$
Define the functional $J:L^2({\Omega})\rightarrow L^2({\Omega})$ such that 
$J(w)=u_w$ where $u_w$ is the weak solution of \eqref{apprx_pro_0}.

Claim: The mapping $J$ is a continuous.

Let $\{w_n\} \subset L^2({\Omega})$ be a sequence such that
\[
w_n \to w_0 \quad \text{strongly in } L^2({\Omega}) \text{ as } n \to \infty.
\]
Let \( u_n := u_{w_n} \in L^2(\Omega) \) denote the corresponding weak solution of \eqref{apprx_pro_0}. We want to show that
\[
u_n \to u_{w_0} \quad \text{strongly in } L^2(\Omega),
\]
where \( u_{w_0} \) is the solution of  \eqref{apprx_pro_0} associated to \( w_0 \).

From prior estimates, we have
\[
\|\nabla u_n\|_{L^{p^+}(\Omega)} \leq C,
\]
for some constant \( C > 0 \) independent of \( n \).
Therefore, there exists $u\in W_{0}^{1,p^{+}}(\Omega)$ such that
\begin{align*}
u_n &\rightharpoonup u \text{ in } W_{0}^{1,p^{+}}(\Omega),\\
u_n &\to u \text{ in } L^2(\Omega), \\
u_n(x) &\to u(x) \quad \text{a.e. in } \Omega.
\end{align*}
It remains to prove that, $u_{w_0} \equiv u$ i.e., $u$ is the weak solution corresponding to $w_0$.

Since each $u_n$ satisfies the weak formulation of \eqref{apprx_pro_0}, we have
\[
\int_\Omega \frac{u_n - u_0}{h} v \dx+
\sum_{i=1}^{N} \int_{\Omega}\left|\frac{\partial u_n}{\partial  x_i} \right|^{p_i(b(w_n))-2}\frac{\partial u_n}{\partial  x_i}\frac{\partial v}{\partial  x_i}\dx+ \epsilon \int_\Omega |\nabla u_n|^{p^{+}-2} \nabla u_n \cdot \nabla v \dx
= \int_\Omega [f]_h(0) v \dx,
\]
for all $v\in W_{0}^{1,p^{+}}(\Omega)$. Now, using the standard test function $v = u_n - \varphi$, and the monotonicity of the associated operators, we obtain
\begin{align}\label{pa1.1}
\int_\Omega \frac{(u_0 - \varphi) (u_n - \varphi)}{h} \dx
&+ \int_\Omega [f]_h(0) (u_n - \varphi)\dx
- \sum_{i=1}^{N} \int_{\Omega}\left|\frac{\partial \varphi}{\partial  x_i} \right|^{p_i(b(w_n))-2}\frac{\partial \varphi}{\partial  x_i}\frac{\partial (u_{n}-\varphi)}{\partial  x_i}\dx\notag\\
&- \epsilon \int_{\Omega} |\nabla \varphi|^{p^{+}- 2} \nabla \varphi \cdot \nabla (u_n - \varphi)\dx \geq 0
\quad \text{for all } \varphi \in W_0^{1, p^+}(\Omega).
\end{align}
By the continuity of $p_i$ and $b$, we have 
\begin{equation}\label{pa1.3}
p_i(b(w_n)) \to p_i(b(w_0)) \quad \text{a.e. in } \Omega.
\end{equation}
Moreover, using the integrability of the terms
\begin{equation}\label{pa1.4}
\int_{\Omega}  \left(\left|\frac{\partial \varphi}{\partial x_i}\right|^{p_i(b(w_n))-2} 
\frac{\partial \varphi}{\partial x_i}\right)^{\frac{p^{+}}{p^{+}-1}}\dx
\leq
\int_{\Omega} \left| \frac{\partial \varphi}{\partial x_i} \right|^{p^+}\dx<\infty
\quad \text{as } \varphi \in W_0^{1,p^+}(\Omega).
\end{equation}
By using the dominated convergence theorem, \eqref{pa1.3} and \eqref{pa1.4}, we get
\[
\sum_{i=1}^{N} \int_{\Omega}\left|\frac{\partial \varphi}{\partial  x_i} \right|^{p_i(b(w_n))-2}\frac{\partial \varphi}{\partial  x_i}\frac{\partial (u_{n}-\varphi)}{\partial  x_i}\dx\to \sum_{i=1}^{N} \int_{\Omega}\left|\frac{\partial \varphi}{\partial  x_i} \right|^{p_i(b(w_0))-2}\frac{\partial \varphi}{\partial  x_i}\frac{\partial (u-\varphi)}{\partial  x_i}\dx.
\]
By passing limit in \eqref{pa1.1}, we have
\begin{align*}
\int_{\Omega} \frac{u_0 - \varphi}{h} (u - \varphi) \dx
&+ \int_{\Omega} [f]_h(0) (u - \varphi) \dx
- \sum_{i=1}^{N} \int_{\Omega}\left|\frac{\partial \varphi}{\partial  x_i} \right|^{p_i(b(w_0))-2}\frac{\partial \varphi}{\partial  x_i}\frac{\partial (u-\varphi)}{\partial  x_i}\dx\\
&- \epsilon \int_{\Omega} |\nabla \varphi|^{p^{+}- 2} \nabla \varphi \cdot \nabla (u- \varphi)\dx \geq 0 \quad \text{for all } \varphi \in W_0^{1, p^+}(\Omega).
 \end{align*}
Take $\varphi = u \mp \delta z$ with $z \in W_0^{1,p^+}(\Omega)$ and $\delta > 0$, we obtain
\begin{align*}
\pm \left(  \int_{\Omega} \frac{u_0 - (u \mp \delta z)}{h} z\dx\right. 
&+ \int_{\Omega} [f]_h(0) z\dx
- \sum_{i=1}^{N} \int_{\Omega}\left|\frac{\partial (u \mp \delta z)}{\partial  x_i} \right|^{p_i(b(w_0))-2}\frac{\partial (u \mp \delta z)}{\partial  x_i}\frac{\partial z}{\partial  x_i}\dx
\\
&\left.- \epsilon \int_{\Omega} |\nabla(u \mp\delta z)|^{p^{+}- 2} \nabla(u \mp \delta z)  \nabla z \dx\right)
\geq 0.
\end{align*}
Taking $\delta \to 0$, in above we have
$$\int_{\Omega}\dfrac{u-u_0}{h}z\dx+\sum_{i=1}^{N} \int_{\Omega}\left|\frac{\partial u}{\partial  x_i} \right|^{p_i(b(w_0))-2}\frac{\partial u}{\partial  x_i}\frac{\partial z}{\partial  x_i}\dx+\epsilon\int_{\Omega}|\nabla u|^{p^{+}-2} \nabla u \nabla z\dx =\int_{\Omega}[f]_h(0)z\dx.$$
Since, $z\in W_0^{1, p^+}(\Omega)$ was arbitrary, we conclude that $u$ is indeed the weak solution of \eqref{apprx_pro_0} with input $w_0$.

Since, $u_{w_0}$ is the unique solution of \eqref{apprx_pro_0} when the data is $w_0 \in L^2(\Omega)$.
Thus from the uniqueness of solution, we have
\[
u_{w_0} \equiv u \quad \text{a.e. in } \Omega.
\]
Finally, we have
\[
w_n \to w_0 \implies u_n=J(w_n) \to J(w_0)=u_{w_0}, 
\]
establishing that $J$ is continuous.

Hence, by Schauder's fixed point theorem $J$ has a fixed point, i.e., for given  $\epsilon > 0$, there exists \( u_\epsilon \in L^2(\Omega) \) such that
\[
J(u_\epsilon ) = u_\epsilon.
\]
Therefore, $u_\epsilon$ solves the regularized problem \eqref{apprx_pro_0} with the data $u_\epsilon$. That is, for all $ v \in W_0^{1,p^+}(\Omega)$
\begin{align}\label{pa1.2}
\int_\Omega \frac{u_\epsilon - u_0}{h} v \dx
+ \sum_{i=1}^{N} \int_\Omega \left| \frac{\partial u_\epsilon}{\partial x_i} \right|^{p_i(b(u_\epsilon)) - 2}
\frac{\partial u_\epsilon}{\partial x_i} \frac{\partial v}{\partial x_i}\dx
+ \epsilon \int_\Omega |\nabla u_\epsilon|^{p^{+} - 2} \nabla u_\epsilon  \nabla v \dx
= \int_\Omega [f]_h(0) v\dx.
\end{align}
Hence, $ u_\epsilon\in W_0^{1,p^+}(\Omega)$ is the solution of the problem \eqref{apprx_pro}.

\textbf{Step 2: Passage to the limit} \( \epsilon \to 0 \).

Choosing $v= u_\epsilon$ as a test function in the variational formulation \eqref{pa1.2}, we obtain the following energy estimate
\begin{align}\label{pa1.7}
\frac{1}{2h} \int_\Omega u_\varepsilon^2 \dx
+ \sum_{i=1}^{N} \int_\Omega \left| \frac{\partial u_\epsilon}{\partial x_i} \right|^{p_i(b(u_\epsilon))} \dx
+ \epsilon \int_\Omega |\nabla u_\epsilon|^{p^+}\dx \leq \frac{1}{2h} \int_\Omega u_0^2 \dx + \int_\Omega [f]_h(0) u_\epsilon \dx.
\end{align}
To handle the right-hand side, we apply Young’s inequality for any $\delta>0$
\begin{equation}\label{young1}
\|[f]_h(0)\|_{L^{(p^+)'}(\Omega)}\|\nabla u_\epsilon\|_{L^{p^+}(\Omega)}\leq \delta \|\nabla u_\epsilon\|_{L^{p^+}(\Omega)}^{p^+}+\frac{1}{\delta^{\frac{1}{{p^+}-1}}}\|[f]_h(0)\|_{L^{(p^+)'}(\Omega)}^{(p^+)'}.
\end{equation}
Substituting \eqref{young1} into \eqref{pa1.7} and choosing $\delta$ small enough, we absorb the gradient term into the left-hand side. This yields the uniform bound
$$\sum_{i=1}^{N} \int_\Omega \left| \frac{\partial u_\epsilon}{\partial x_i} \right|^{p_i(b(u_\epsilon))} \dx+\frac{\epsilon}{2}\int_\Omega |\nabla u_\epsilon|^{p^+}\dx< C_2,$$
for some $C_2$ independent of $\epsilon$.
This in turn implies a uniform bound
$$\int_\Omega |\nabla u_\epsilon|^{p^-}\dx< C.$$
By reflexivity of \( W^{1,p^-}_0(\Omega) \), there exists a function \( u \in W^{1,p^-}_0(\Omega) \) such that, up to a subsequence,
\[
u_\epsilon \rightharpoonup u \quad \text{weakly in } W^{1,p^-}_0(\Omega),
\]
\[
u_\epsilon(x) \to u(x) \quad \text{a.e. in } \Omega,
\]
and, by continuity of the mappings \( p_i \circ b \), we also have
\[
p_i(b(u_\epsilon(x))) \to p_i(b(u(x))) \ \text{ a.e. in } \Omega,  \text{ for all } i = 1, \dots, N.
\]
Define \( p_{i,\epsilon}(x) := p_i(b(u_\epsilon(x))) \). Applying Lemma~\ref{l1} to the sequence \( \{u_\epsilon\} \) and using the above pointwise convergence of variable exponents, we obtain
\[
\sum_{i=1}^{N} \int_\Omega \left| \frac{\partial u}{\partial x_i} \right|^{p_i(b(u))} \dx
\leq \liminf_{\epsilon \to 0} \sum_{i=1}^{N} \int_\Omega \left| \frac{\partial u_\epsilon}{\partial x_i} \right|^{p_i(b(u_\epsilon))} \dx
\leq C.
\]
Proceeding as in Steps 3 and 4 of the proof of Theorem~\ref{t1}, we conclude that $u$  is the solution to \eqref{elli_pro_k} for $k=1$.

By repeating this procedure iteratively, we obtain that for a given $u_{k-1} \in W_0^{1,\vec{\textbf{p}}(b(u))}(\Omega),$ there exists  $u_k \in W_0^{1,\vec{\textbf{p}}(b(u))}(\Omega)$  such that
\begin{equation}\label{weak_k}
\int_\Omega \frac{u_k - u_{k-1}}{h} v \dx
+ \sum_{i=1}^{N} \int_\Omega \left| \frac{\partial u_k}{\partial x_i} \right|^{p_i(b(u_k)) - 2}
\frac{\partial u_k}{\partial x_i} \frac{\partial v}{\partial x_i} \dx
= \int_\Omega [f]_h((k-1)h) v \dx, \quad \forall v \in W_0^{1,\vec{\textbf{p}}(b(u))}(\Omega).
\end{equation}
\end{proof}
\subsection{Time-step size \texorpdfstring{$h\to 0$}{h→0}}
For each time step \( h = \frac{T}{N_0} \), we define the piecewise constant function (in variable $t$) $u_h$ as
\[
u_h(x,t) =
\begin{cases}
u_0(x), & t = 0, \\
u_1(x), & 0 < t \leq h, \\
\vdots & \\
u_j(x), & (j-1)h < t \leq jh, \\
\vdots & \\
u_{N_0}(x), & (N_0 - 1)h < t \leq N_0 h = T.
\end{cases}
\]
\textit{Proof of the Theorem \ref{t2}:}
Choosing $v= u_k$ in \eqref{weak_k}, we obtain the energy estimate
\begin{align}\label{pa1.5}
\frac{1}{2h} \int_\Omega u_{k}^2 \dx
+ \sum_{i=1}^{N} \int_\Omega \left| \frac{\partial u_k}{\partial x_i} \right|^{p_i(b(u_k))} \dx
 \leq \frac{1}{2h} \int_\Omega u_{k-1}^2 \dx + \|[f]_h((k-1)h)\|_{L^{(p^-)'}(\Omega)}\|\nabla u_k\|_{L^{p^-}(\Omega)}.
\end{align}
To relate the norms, we use the following estimate
\begin{align*}
\int_{\Omega}|\nabla u_k|^{p^-}\dx&\leq C\sum_{i=1}^{N}\left\|\frac{\partial u_k}{\partial x_{i}}\right\|_{L^{p^{-}}(\Omega)}^{p^{-}}\\
&\leq C\sum_{i=1}^{N}\left(\int_{\left|\frac{\partial u_k}{\partial x_{i}}\right|\geq 1} \left|\frac{\partial u_k}{\partial x_{i}}\right|^{p^-}\dx+\int_{\left|\frac{\partial u_k}{\partial x_{i}}\right|< 1} \left|\frac{\partial u_k}{\partial x_{i}}\right|^{p^-}\dx\right)\\
 &\leq C\sum_{i=1}^{N}\left(\int_{\left|\frac{\partial u_k}{\partial x_{i}}\right|\geq 1} \left|\frac{\partial u_k}{\partial x_{i}}\right|^{p_i(b(u_k))}\dx+\int_{\left|\frac{\partial u_k}{\partial x_{i}}\right|< 1} \dx \right)\\
 &\leq C\sum_{i=1}^{N}\int_{\Omega} \left|\frac{\partial u_k}{\partial x_{i}}\right|^{p_i(b(u_k))}\dx+C.
\end{align*}
By the Young's inequality, for a given $\delta>0$, we have
\begin{equation}\label{young}
\|[f]_h((k-1)h)\|_{L^{(p^-)'}(\Omega)}\|\nabla u_k\|_{L^{p^-}(\Omega)}\leq \delta \|\nabla u_k\|_{L^{p^-}(\Omega)}^{p^-}+\frac{1}{\delta^{\frac{1}{{p^-}-1}}}\|[f]_h((k-1)h)\|_{L^{(p^-)'}(\Omega)}^{(p^-)'}.
\end{equation}
Combining \eqref{pa1.5}-\eqref{young}, and absorbing terms, we obtain
$$\int_\Omega u_{k}^2 \dx
+ h\sum_{i=1}^{N} \int_\Omega \left| \frac{\partial u_k}{\partial x_i} \right|^{p_i(b(u_k))} \dx
 \leq  \int_\Omega u_{k-1}^2 \dx +Ch.$$

Summing over \( k = 1 \) to \( N_0 \), we deduce the uniform a priori estimate
\[
\int_\Omega u_h^2(x,t) \dx + \sum_{i=1}^{N}  \int_0^T \int_\Omega \left| \frac{\partial u_k}{\partial x_i} \right|^{p_i(b(u_k))} \dxt\leq \int_\Omega u_0^2 \dx + C T.
\]
Hence, we obtain the uniform bounds
\[
\|u_h\|_{L^\infty(0,T; L^2(\Omega))} + \sum_{i=1}^{N}\left\|\frac{\partial u_h}{\partial x_i}\right\|_{L^{p_i(b(u_h))}(\Omega_T)} + \|u_h\|_{L^{p^-}(0,T; W_0^{1,\vec{\textbf{p}}(b(u_h))}(\Omega))} \leq C.
\]

Thus, we can extract a subsequence such that
\[
\begin{aligned}
u_h & \rightharpoonup u \quad \text{weakly}-^* \text{ in }  L^\infty(0,T; L^2(\Omega)), \\
u_h & \rightharpoonup u \quad \text{weakly in } L^{p^{-}}(0,T; W_0^{1,p^{-}}(\Omega)), \\
\left|\frac{\partial u_h}{\partial x_i}\right|^{p_i(b(u_h)) - 2} \frac{\partial u_h}{\partial x_i} &\rightharpoonup \xi_i \quad \text{in } L^{(p^{-})'}(\Omega_T), \text{ for all } i=1,2,\ldots,N
\end{aligned}
\]
where, $\xi=(\xi_1,\xi_1,\cdots,\xi_N)\in (L^{(p^{-})'}(\Omega_T))^N$.
To prove \( u \) is a weak solution of problem \eqref{p1.1}, choose any \( \varphi \in C^1(\Omega_T) \) with \( \varphi(\cdot,T) = 0 \) and \( \varphi|_{\Gamma} = 0 \). Take \( \varphi(x,kh) \) as test function in \eqref{weak_k} to obtain for each \( k \)
\begin{align*}
\frac{1}{h} \int_\Omega u_k(x) \varphi(x, kh) \dx& - \frac{1}{h} \int_\Omega u_{k-1}(x) \varphi(x, kh) \dx+ \sum_{i=1}^{N} \int_\Omega \left| \frac{\partial u_k}{\partial x_i} \right|^{p_i(b(u_k)) - 2}
\frac{\partial u_k}{\partial x_i} \frac{\partial \varphi(x, kh)}{\partial x_i} \dx\\
&= \int_\Omega [f]_h((k-1)h) \varphi(x, kh) \dx.
\end{align*}
Summing over \( k = 1 \) to \( N_0 \), and using the boundary condition \( \varphi(\cdot, T) = 0 \), we derive
\begin{align*}
 h \sum_{k=1}^{N_0-1}& \int_\Omega u_h(x, kh) \frac{\varphi(x, kh) - \varphi(x, (k+1)h)}{h} \dx - \int_\Omega u_0(x) \varphi(x,h) \dx \\
& + h\sum_{k=1}^{N_0}\sum_{i=1}^{N} \int_\Omega \left| \frac{\partial u_h(x, kh)}{\partial x_i} \right|^{p_i(b(u_h(x, kh))) - 2}
\frac{\partial u_h(x, kh)}{\partial x_i} \frac{\partial \varphi(x, kh)}{\partial x_i} \dx  \\
&= h \sum_{k=1}^{N_0} \int_\Omega [f]_h((k-1)h) \varphi(x, kh) \dx.
\end{align*}
Furthermore, we observe
\begin{align*}
&h\sum_{k=1}^{N_0}\sum_{i=1}^{N} \int_\Omega \left| \frac{\partial u_h(x, kh)}{\partial x_i} \right|^{p_i(b(u_h(x, kh))) - 2}
\frac{\partial u_h(x, kh)}{\partial x_i} \frac{\partial \varphi(x, kh)}{\partial x_i} \dx\\
&=\sum_{i=1}^{N} \int_0^T\int_\Omega \left| \frac{\partial u_h(x, t)}{\partial x_i} \right|^{p_i(b(u_h(x, t))) - 2}
\frac{\partial u_h(x, t)}{\partial x_i} \frac{\partial \varphi(x, t)}{\partial x_i} \dxt\\
&+\sum_{k=1}^{N_0}\sum_{i=1}^{N}\int_{(k-1)h}^{kh} \int_\Omega \left| \frac{\partial u_h(x, t)}{\partial x_i} \right|^{p_i(b(u_h(x, t))) - 2}
\frac{\partial u_h(x, t)}{\partial x_i}\left( \frac{\partial \varphi(x, kh)}{\partial x_i} -\frac{\partial \varphi(x, t)}{\partial x_i}\right) \dxt\\
&\to \int_0^T\int_\Omega \xi\cdot \nabla \varphi \dxt \text{ as } h\to 0.
\end{align*}
By smoothness of \( \varphi \), as \( h \to 0 \), we recover
\[
- \int_0^T \int_\Omega u \frac{\partial \varphi}{\partial t} \dxt - \int_\Omega u_0(x) \varphi(x,0) \dx - \int_0^T \int_\Omega \xi \cdot \nabla \varphi \dxt = \int_0^T \int_\Omega f \varphi \dxt. 
\]
Following the method in \cite{chipot2019some}, we use monotonicity to conclude that \( \xi_i = \left|\frac{\partial u}{\partial  x_i} \right|^{p_i(b(u))-2}\frac{\partial u}{\partial  x_i} \) a.e. in \( \Omega_T \), for all $i=1,2,\ldots,N$.
Also, proceeding as in Lemma~\ref{l1}, it follows that 
\[
\frac{\partial u}{\partial x_i} \in L^{p_i(b(u))}(\Omega_T).
\]
 Choosing \( \varphi \in C^\infty_0(\Omega_T) \), we get
\[
- \int_0^T \int_\Omega u \frac{\partial \varphi}{\partial t} \dxt= \int_0^T \int_\Omega \xi \cdot \nabla \varphi \dxt + \int_0^T \int_\Omega f \varphi \dxt. 
\]
Hence, \(u_t \in X(\Omega_T)^{*}\). From the fact that \(u \in X(\Omega_T)\) and \(u_t \in X(\Omega_T)^{*}\), it follows from standard theory 
(see, for instance, \cite{diening2012monotone, roubicek2013nonlinear}) that 
\[
u \in C([0,T]; L^{2}(\Omega)).
\] This completes the proof. \qed
\bibliographystyle{siam}
\bibliography{mixed}

\begin{thebibliography}{10}

\bibitem{andreianov2010structural}
{\sc B.~Andreianov, M.~Bendahmane, and S.~Ouaro}, {\em Structural stability for variable exponent elliptic problems, {II}: The $p(u)$-{L}aplacian and coupled problems}, Nonlinear Analysis: Theory, Methods \& Applications, 72 (2010), pp.~4649--4660.

\bibitem{antontsev2021class}
{\sc S.~Antontsev, I.~Kuznetsov, and S.~Shmarev}, {\em On a class of nonlocal evolution equations with the $p [\nabla u]$-{L}aplace operator}, Journal of Mathematical Analysis and Applications, 501 (2021), p.~125221.

\bibitem{antontsev2020class}
{\sc S.~Antontsev and S.~Shmarev}, {\em On a class of nonlocal evolution equations with the $p [u (x, t)]$-{L}aplace operator}, Nonlinear Analysis: Real World Applications, 56 (2020), p.~103165.

\bibitem{aouaoui2023existence}
{\sc S.~Aouaoui}, {\em An existence result to some local and nonlocal $p(u)$-{L}aplacian problem defined on $\mathbb{R}^n$}, Bulletin of the Malaysian Mathematical Sciences Society, 46 (2023), p.~123.

\bibitem{aouaoui2022some}
{\sc S.~Aouaoui and A.~E. Bahrouni}, {\em On some equation defined on the whole euclidean space $\mathbb{R}^n$ and involving the $p(u)$-{L}aplacian}, Mathematical Methods in the Applied Sciences, 45 (2022), pp.~8528--8554.

\bibitem{bahrouni2024new}
{\sc A.~E. Bahrouni, A.~Bahrouni, and H.~Missaoui}, {\em A new class of anisotropic double phase problems: exponents depending on solutions and their gradients}, arXiv preprint arXiv:2409.16662,  (2024).

\bibitem{barilla2021existence}
{\sc D.~Barilla and G.~Caristi}, {\em Existence results for some anisotropic {D}irichlet problems}, Journal of Mathematical Analysis and Applications, 501 (2021), p.~124044.

\bibitem{bendahmane2011approximation}
{\sc M.~Bendahmane, M.~Chrif, and S.~El~Manouni}, {\em An approximation result in generalized anisotropic {S}obolev spaces and applications}, Zeitschrift f{\"u}r Analysis und ihre Anwendungen, 30 (2011), pp.~341--353.

\bibitem{carl}
{\sc S.~Carl, V.~K. Le, and D.~Motreanu}, {\em Nonsmooth variational problems and their inequalities: comparison principles and applications}, Springer Science \& Business Media, 2007.

\bibitem{chipot2019some}
{\sc M.~Chipot and H.~de~Oliveira}, {\em Some results on the $p(u)$-{L}aplacian problem}, Mathematische Annalen, 375 (2019), pp.~283--306.

\bibitem{chrif2024renormalized}
{\sc M.~Chrif, H.~Hjiaj, and M.~Sasy}, {\em Renormalized solutions for some non-coercive parabolic equation in the anisotropic {S}obolev spaces}, Palestine Journal of Mathematics, 13 (2024), pp.~147--169.

\bibitem{cruz2013variable}
{\sc D.~V. Cruz-Uribe and A.~Fiorenza}, {\em Variable {L}ebesgue spaces: Foundations and harmonic analysis}, Springer Science \& Business Media, 2013.

\bibitem{diening2012monotone}
{\sc L.~Diening, P.~N{\"a}gele, and M.~R{\r{u}}{\v{z}}i{\v{c}}ka}, {\em Monotone operator theory for unsteady problems in variable exponent spaces}, Complex variables and elliptic equations, 57 (2012), pp.~1209--1231.

\bibitem{fan2011anisotropic}
{\sc X.~Fan}, {\em Anisotropic variable exponent {S}obolev spaces and-{L}aplacian equations}, Complex Variables and Elliptic Equations, 56 (2011), pp.~623--642.

\bibitem{fan2001spaces}
{\sc X.~Fan and D.~Zhao}, {\em On the spaces ${L^{p(x)}(\Omega)}$ and ${W^{k,p(x)}(\Omega)}$}, Journal of Mathematical Analysis and Applications, 263 (2001), pp.~424--446.

\bibitem{giacomoni2012some}
{\sc J.~Giacomoni and G.~Vallet}, {\em Some results about an anisotropic-{L}aplace--{B}arenblatt equation}, Advances in Nonlinear Analysis, 1 (2012), pp.~277--298.

\bibitem{kovavcik}
{\sc O.~Kovacik and J.~Rakosnik}, {\em On spaces ${L^{p(x)}(\Omega)}$ and ${W^{k,p(x)}(\Omega)}$}, Czechoslovak Math, J, 41 (1991), pp.~592--618.

\bibitem{misawa2023finite}
{\sc M.~Misawa, K.~Nakamura, and M.~A.~H. Sarkar}, {\em A finite time extinction profile and optimal decay for a fast diffusive doubly nonlinear equation}, Nonlinear Differential Equations and Applications NoDEA, 30 (2023), p.~43.

\bibitem{radulescu2015partial}
{\sc V.~D. Radulescu and D.~D. Repovs}, {\em Partial differential equations with variable exponents: variational methods and qualitative analysis}, CRC press, 2015.

\bibitem{razani2024positive}
{\sc A.~Razani and G.~M. Figueiredo}, {\em Positive solutions for a semipositone anisotropic $p$-{L}aplacian problem}, Boundary Value Problems, 2024 (2024), p.~34.

\bibitem{roubicek2013nonlinear}
{\sc T.~Roub{\'\i}{\v{c}}ek}, {\em Nonlinear Partial Differential Equations with Applications}, Birkhäuser, Basel, 2nd~ed., 2013.

\bibitem{zhang2021some}
{\sc C.~Zhang and X.~Zhang}, {\em Some further results on the nonlocal $p$-{L}aplacian type problems}, Proceedings of the Royal Society of Edinburgh Section A: Mathematics, 151 (2021), pp.~953--970.

\bibitem{zhikov2009technique}
{\sc V.~V. Zhikov}, {\em On the technique for passing to the limit in nonlinear elliptic equations}, Functional Analysis and Its Applications, 43 (2009), pp.~96--112.

\end{thebibliography}
\end{document}